\documentclass[pdflatex,sn-mathphys-num]{sn-jnl}
\usepackage{graphicx}%
\usepackage{multirow}%
\usepackage{amsmath,amssymb,amsfonts}%
\usepackage{amsthm}%
\usepackage{mathrsfs}%
\usepackage[title]{appendix}%
\usepackage{xcolor}%
\usepackage{textcomp}%
\usepackage{manyfoot}%
\usepackage{booktabs}%
\usepackage{algorithm}%
\usepackage{algorithmicx}%
\usepackage{algpseudocode}%
\usepackage{listings}%
\usepackage[subrefformat=parens]{subcaption} 
\theoremstyle{thmstyleone}%
\newtheorem{theorem}{Theorem}
\newtheorem{lemma}{Lemma}
\newtheorem{proposition}{Proposition}%
\theoremstyle{thmstyletwo}%
\newtheorem{remark}{Remark}%
\theoremstyle{thmstylethree}%
\newtheorem{definition}{Definition}%
\newtheorem{assumption}{Assumption}
\newtheorem{corollary}{Corollary}
\begin{document}
\title[Proximal nonlinear conjugate gradient methods]{Proximal nonlinear conjugate gradient methods for minimizing composite functions}
\author*[1]{\fnm{Shodai} \sur{Hamana}}\email{sho\_1715@keio.jp}

\author[2]{\fnm{Yasushi} \sur{Narushima}}\email{narushima@keio.jp}

\affil*[1]{\orgdiv{School of Science for Open and Environmental Systems}, 
\orgname{Keio University}, 
\orgaddress{\city{Yokohama}, \country{Japan}}}

\affil[2]{\orgdiv{Department of Industrial and Systems Engineering}, 
\orgname{Keio University}, 
\orgaddress{\city{Yokohama}, \country{Japan}}}


\abstract{The nonlinear conjugate gradient methods are known to be an effective approach for standard unconstrained optimization problems especially for large-scale problems.
This paper proposes a proximal nonlinear conjugate gradient method, which extends the nonlinear conjugate gradient methods to composite objective functions, namely, the sum of a smooth nonconvex function and a nonsmooth convex function, and its extension to the case where the nonsmooth function is weakly convex.
The proposed method uses the \textit{forward-backward residual} which is defined by using the proximal mapping instead of the gradient and determines the search direction based on the three-term Hestenes-Stiefel (HS) formula. 
We establish the global convergence of the proposed algorithm under standard assumptions for both convex and weakly convex nonsmooth functions, and analyze its convergence rate in terms of stationarity.
In addition, when the smooth term is strongly convex, we prove that the generated sequence converges to the optimal solution and establish its convergence rate.
Finally, numerical experiments show that the proposed method is stable and achieves better performance than existing methods in both convex and nonconvex settings.}

\keywords{Nonsmooth optimization, Nonlinear conjugate gradient method, Proximal mapping, Weakly convex, Global convergence properties}
\pacs[Acknowledgments]{This work was supported by Japan Society for the Promotion of Science (JSPS) KAKENHI Grant Numbers JP26K14720 and JP23K10999.
The authors would like to thank Dr. Shummin Nakayama for helpful comments on the numerical experiments.}
\maketitle

\section{Introduction}\label{sec:1}
In this paper, we consider the composite minimization problem
\begin{equation}\label{objective_function}
     \min_{{x} \in \mathbb{R}^n} \quad f({x}) := g({x}) + h({x}),
\end{equation}
where~$f : \mathbb{R}^n \to \mathbb{R} \cup \{+\infty\}$ is a composite function of~$g:\mathbb{R}^n\to\mathbb{R}$ and~$h:\mathbb{R}^n\to\mathbb{R} \cup \{+\infty\}$, where~$g$ is a continuously differentiable function with gradient $\nabla g$, and $h$ is a proper lower semi-continuous convex function.
Problem~\eqref{objective_function} plays an important role in various fields such as image processing~\cite{Chambolle2016} and machine learning~\cite{Mohammadi2023}.
Specifically, when the function~$h$ includes sparse regularization like $\ell_1$-norm, it is known that the optimal solution has sparsity. 
These characteristics have been widely applied such as LASSO~\cite{Tibshirani1996}, group LASSO~\cite{Yuan2006}, and $\ell_1$-regularized logistic regression~\cite{Shevade2003}.

For standard unconstrained optimization problems, namely~\eqref{objective_function} with $h({x})=0$, iterative methods based on the gradient of the objective function, such as the steepest descent method, nonlinear conjugate gradient methods, and quasi-Newton methods, are widely used.
On the other hand, to solve~\eqref{objective_function}, a typical approach is the proximal gradient method.
Recent advancements in proximal gradient methods include Nesterov-type acceleration~\cite{Beck2009}, nonmonotone variants~\cite{Li2015NIPS2015,Wright2009}, and extensions based on Bregman distances that relax smoothness assumptions~\cite{Bauschke2016,Teboulle2018} or address nonconvex settings~\cite{Li2015NIPS2015,Wang2024}.
Additionally, proximal quasi-Newton methods~\cite{Becker2019,Lee2014,Li2017MMOR,Nakayama2021,Nakayama2024,Narushima2023,Scheinberg2016} improve efficiency by utilizing second-order information, 
although each iteration becomes computationally more expensive, as they require computing weighted proximal mappings that are not prox-friendly.

In this paper, we focus on nonlinear conjugate gradient methods, which are highly effective for standard unconstrained optimization problems, namely~\eqref{objective_function} with $h({x})=0$. 
In each iteration, the required information is limited to the current point ${x}_k$, its gradient $\nabla g({x}_k)$, the previous direction ${d}_{k-1}$, and the gradient at the previous point $\nabla g({x}_{k-1})$.
Therefore, unlike quasi-Newton methods, they do not require storing a Hessian approximation of the objective function, leading to lower memory usage and improved computational efficiency.
Thus, the nonlinear conjugate gradient methods are effective methods, especially for large-scale optimization problems.
In recent years, nonlinear conjugate gradient methods that generate sufficient descent directions have been actively studied~\cite{Al-Baali2015,Hager2006,Hager2005,Kobayashi2017,nakamura2013,Narushima2012,Narushima2014,Zhang2006,Zhang2007,Zhang2006b}.
In particular, the three-term HS method proposed by Zhang et al.~\cite{Zhang2007} generates descent directions independently of the line search and is considered more efficient than other methods.
Despite these advantages, to the best of our knowledge, approaches based on the nonlinear conjugate gradient methods have not yet been thoroughly investigated in this context. 
Motivated by these observations, we propose a proximal nonlinear conjugate gradient method for solving~\eqref{objective_function}.
The main contributions of this paper are:
\begin{itemize}
    \item We propose a proximal nonlinear conjugate gradient method for minimizing a composite objective function by introducing the \textit{forward-backward residual} and employing a search direction based on the three-term HS formula~\cite{Zhang2007}.
    \item When $h({x}) = 0$ and $g({x})$ is a strongly convex function and appropriate parameter choices, the proposed method reduces to the nonlinear conjugate gradient methods. 
    This property implies our method is a natural extension of the nonlinear conjugate gradient methods.
    \item We prove the global convergence of the proposed method for both convex and weakly convex nonsmooth terms under standard assumptions.
    We establish the convergence rates of the proposed method under various structural settings. 
    For general nonconvex composite problems where the nonsmooth term is either convex or weakly convex, we establish an optimal global sublinear convergence rate of $\mathcal{O}(1/\sqrt{k})$ in terms of both the stationarity measure and the distance to the subdifferential, which matches the standard rates in the literature~\cite{Beck2017, Bohm2021, bookNesterov2004}. 
    Furthermore, when the smooth term $g$ is strongly convex, we establish that the distance to the optimal solution converges at an $\mathcal{O}(1/\sqrt{k})$ rate.
    \item Numerical comparisons with TFOCS~\cite{tfocs} and PNOPT~\cite{pnopt} demonstrate that our method achieves superior and stable performance across both convex and nonconvex settings.
\end{itemize}
The rest of this paper is organized as follows.
Section~\ref{sec:2} reviews the notation and existing methods.
Section~\ref{sec:3} details the proposed algorithm and establishes its global convergence.
Section~\ref{sec:weakly_convex} extends the proposed method to the weakly convex setting and establishes its convergence rate.
Section~\ref{sec:numerical_experiment} presents numerical experiments, and Section~\ref{sec:5} concludes the paper.

\section{Notations and preliminaries}\label{sec:2}
We provide some definitions of the mathematical concepts used in this paper and introduce some existing methods.
\subsection{Notations and definitions}\label{sec:2.1}
First, we explain some definitions of mathematical concepts relevant to convex analysis and optimization used in this paper. 
Let $\mathbb{R}$ represent the sets of real numbers.
For a vector ${x} \in \mathbb{R}^n$, the Euclidean norm on $\mathbb{R}^n$ will be denoted as $\|{x}\| = \sqrt{{x}^\top{x}}$, and also the $\ell_1$-norm will be denoted as $\|{x}\|_1 = \sum_{i=1}^{n}|x_i|$.
The distance from a point $x \in \mathbb{R}^n$ to a set $S \subseteq \mathbb{R}^n$ is denoted by $\text{dist}(x, S) := \inf_{y \in S} \Vert{}x - y\Vert{}$.
For a function $f$, we denote its domain by $\textnormal{dom}\:f:=\{{x} \in \mathbb{R}^n:f({x})< \infty \}$.
Given a convex function $h$ on $\mathbb{R}^n$, the subdifferential $\partial h({x})$ of $h$ at ${x}$ is 
\begin{equation*}
    \partial h({x}) = \{ {\xi} \in \mathbb{R}^n |\:h({u}) \geq h({x}) + {\xi}^\top({u}-{x}) ,\forall {u} \in \mathbb{R}^n \}.
\end{equation*}
We denote the set of stationary points associated with problem~\eqref{objective_function} as
\begin{equation}\label{definition:set_stationary_points}
\text{zer } \partial f =
\left\{
{x} \in \mathbb{R}^n | {0} \in \partial f({x})
\right\}
=
\left\{
{x} \in \mathbb{R}^n | {0} \in \nabla g({x}) + \partial h({x})
\right\}.
\end{equation}
If $g({x})$ is convex, then any stationary point is a global minimizer of~\eqref{objective_function}.
For given a constant $\mu > 0$ and a convex function $h:\mathbb{R}^n\to\mathbb{R}$, proximal mapping of $\mu h$ at a point ${v}\in \mathbb{R}^n$ is defined by
\begin{equation}\label{definition:proximal_operator_formula}
\operatorname{prox}_{\mu h}({v}) = \operatorname*{argmin}_{{x} \in \mathbb{R}^n} \left\{ h({x}) + \frac{1}{2\mu} \|{x} - {v}\|^2 \right\}.
\end{equation}
It was shown in~\cite{Beck2017} that the proximal mapping has several important properties.
\begin{proposition}\label{prop_prox_property}
\textnormal Given a constant $\mu > 0$ and a convex function $h:\mathbb{R}^n\to\mathbb{R}$, the following holds:
\begin{equation}\label{property:proximal_operetor_formula_1}
    \frac{1}{\mu} \left( {v}-\operatorname{prox}_{\mu h}({v}) \right)
    \in \partial h (\operatorname{prox}_{\mu h}({v})).
\end{equation}
\end{proposition}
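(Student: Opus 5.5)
Write $p := \operatorname{prox}_{\mu h}(v)$. The plan is to use the first-order optimality condition for the strongly convex problem defining $p$ in \eqref{definition:proximal_operator_formula}. Since the statement takes $h$ to be real-valued and convex on all of $\mathbb{R}^n$, the objective $\phi(x) := h(x) + \frac{1}{2\mu}\|x - v\|^2$ is finite, convex, and $\frac{1}{\mu}$-strongly convex. It therefore has a unique minimizer, so $p$ is well defined. Fermat's rule for convex functions gives $0 \in \partial \phi(p)$.

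The one step needing care is the subdifferential sum rule $\partial \phi(p) = \partial h(p) + \frac{1}{\mu}(p - v)$. To stay self-contained, I would avoid the general sum rule and argue directly from the definition of $\partial h$ with a one-sided perturbation, as follows.

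Fix $u \in \mathbb{R}^n$ and $t \in (0,1)$, and set $x_t = p + t(u - p)$. Minimality of $p$ gives $\phi(x_t) \ge \phi(p)$. Convexity of $h$ gives $h(x_t) \le (1-t)h(p) + t h(u)$. Expanding the quadratic term,
\[
\tfrac{1}{2\mu}\|x_t - v\|^2 - \tfrac{1}{2\mu}\|p - v\|^2 = \tfrac{t}{\mu}(p - v)^\top (u - p) + \tfrac{t^2}{2\mu}\|u - p\|^2 .
\]
Combining these yields
\[
0 \le t\bigl(h(u) - h(p)\bigr) + \tfrac{t}{\mu}(p - v)^\top(u - p) + \tfrac{t^2}{2\mu}\|u - p\|^2 .
\]
Dividing by $t$ and letting $t \downarrow 0$ gives
\[
h(u) \ge h(p) + \tfrac{1}{\mu}(v - p)^\top (u - p).
\]
Since $u$ was arbitrary, this is exactly the statement $\frac{1}{\mu}(v - p) \in \partial h(p)$, which is \eqref{property:proximal_operetor_formula_1}.

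For the extended-valued proper lsc case, the same argument applies with $u \in \operatorname{dom} h$; for $u \notin \operatorname{dom} h$ the inequality holds trivially. Existence of $p$ then follows from lower semicontinuity and coercivity of $\phi$, because a proper lsc convex function has an affine minorant.
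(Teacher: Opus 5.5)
Your proof is correct. The paper gives no argument of its own here; it simply cites Beck (2017). The standard proof there writes Fermat's rule $0\in\partial\phi(p)$ for $\phi=h+\frac{1}{2\mu}\|\cdot-v\|^2$. It then applies the subdifferential sum rule, which is legitimate because the quadratic is finite and differentiable everywhere, to get $0\in\partial h(p)+\frac{1}{\mu}(p-v)$.

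You start from the same optimality idea but replace the sum rule by a one-sided perturbation along $x_t=p+t(u-p)$. The only ingredients are then convexity of $h$ on a segment and the definition of $\partial h$, so in effect you prove inline the special case of the sum rule that is needed.

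The cited route is shorter, and given the sum-rule machinery it covers proper lsc $h$ at no extra cost. In your extended-valued remark, the only implicit point is $p\in\operatorname{dom}h$, which follows from $\phi(p)\le\phi(y)<\infty$ for any $y\in\operatorname{dom}h$.

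Your route is self-contained, and it has an additional merit: it extends almost verbatim to the weakly convex setting of Section~\ref{sec:weakly_convex}. The paper uses the same inclusion there, in the proofs of its rate theorems, without separate proof. If $h$ is $\rho$-weakly convex, the chord inequality becomes $h(x_t)\le(1-t)h(p)+t\,h(u)+\frac{\rho}{2}t(1-t)\|u-p\|^2$. Dividing by $t$ and letting $t\downarrow 0$ then gives $h(u)\ge h(p)+\frac{1}{\mu}(v-p)^\top(u-p)-\frac{\rho}{2}\|u-p\|^2$, hence $\frac{1}{\mu}(v-p)\in\hat\partial h(p)$.
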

\begin{proposition}\label{prop_prox_nonexpansive}
\textnormal Given a constant $\mu > 0$ and a convex function $h:\mathbb{R}^n\to\mathbb{R}$, the following holds:
\begin{equation}\label{property:proximal_operetor_formula_2}
    \|\operatorname{prox}_{\mu h}({u}) - \operatorname{prox}_{\mu h}({v}) \|
    \leq \| {u} - {v} \|.
    \end{equation}
\end{proposition}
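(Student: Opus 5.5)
To prove Proposition~\ref{prop_prox_nonexpansive}, I will rely entirely on Proposition~\ref{prop_prox_property} and the monotonicity of the subdifferential of a convex function. Set $p = \operatorname{prox}_{\mu h}({u})$ and $q = \operatorname{prox}_{\mu h}({v})$. These points are well defined and unique, because the objective in~\eqref{definition:proximal_operator_formula} is strongly convex. Proposition~\ref{prop_prox_property} then gives
\begin{equation*}
\frac{1}{\mu}({u}-p) \in \partial h(p), \qquad \frac{1}{\mu}({v}-q) \in \partial h(q).
\end{equation*}

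The next step is monotonicity of $\partial h$. Write $\xi = \frac{1}{\mu}({u}-p)$ and $\eta = \frac{1}{\mu}({v}-q)$. The subgradient inequality at $p$, evaluated at the point $q$, gives $h(q) \ge h(p) + \xi^\top(q-p)$. The subgradient inequality at $q$, evaluated at $p$, gives $h(p) \ge h(q) + \eta^\top(p-q)$. Adding the two yields $(\xi-\eta)^\top(p-q) \ge 0$. Multiplying by $\mu>0$ and substituting gives
\begin{equation*}
\bigl(({u}-{v}) - (p-q)\bigr)^\top (p-q) \ge 0,
\end{equation*}
which is the same as $\|p-q\|^2 \le ({u}-{v})^\top(p-q)$. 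This is firm nonexpansiveness.

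Finally, apply the Cauchy--Schwarz inequality to the right-hand side to get $\|p-q\|^2 \le \|{u}-{v}\|\,\|p-q\|$. If $p=q$ the claim is trivial; otherwise divide by $\|p-q\|$ to obtain~\eqref{property:proximal_operetor_formula_2}.

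None of these steps is a real obstacle. The only point needing care is that Proposition~\ref{prop_prox_property} supplies subgradients at the two prox points, so the two subgradient inequalities can be applied with the prox points as test points; for extended-valued $h$ these prox points lie in $\textnormal{dom}\,h$, so both $h(p)$ and $h(q)$ are finite and the addition is legitimate.
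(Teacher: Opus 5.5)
Your proof is correct. It is the standard argument: the subgradient inclusions from Proposition~\ref{prop_prox_property}, then monotonicity of $\partial h$ to obtain firm nonexpansiveness, then Cauchy--Schwarz. The paper gives no proof of its own and only cites Beck's book, where nonexpansiveness is derived the same way, via the second prox theorem, which is the variational form of the same subgradient inclusion.
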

\subsection{Nonlinear conjugate Gradient Methods} \label{sec:2.2}
In this section, we introduce nonlinear conjugate gradient methods, for solving:
\begin{equation*}\label{smooth_objective_function}
     \min_{{x} \in \mathbb{R}^n}  g({x}),
\end{equation*}
where the function $g:\mathbb{R}^n\to\mathbb{R}$ is a continuously differentiable. 
The prototype algorithm of nonlinear conjugate gradient methods is presented Algorithm~\ref{alg:nonlinear_cg_method}.
\noindent
\begin{algorithm}[htbp]
    \caption{Nonlinear conjugate gradient method} \label{alg:nonlinear_cg_method}
    \begin{algorithmic} 
        \State \textbf{Step 0:} Given an initial point ${x}_0 \in \mathbb{R}^n$. Set $k = 0$.
        \State \textbf{Step 1:} If the stopping condition is satisfied, terminate the algorithm.
        \State \textbf{Step 2:} Compute the search direction ${d}_k$ as:
            \begin{equation*}\label{nonlinear_cg:direction}
                {d}_k =
                \begin{cases}
                    -\nabla g({x}_0), & k=0, \\
                    -\nabla g({x}_k) + \beta_k {d}_{k-1}, & k \geq 1.
                \end{cases}
            \end{equation*}
        \State \textbf{Step 3:} Compute the step size $\alpha_k>0$ by a line search.
        \State \textbf{Step 4:} Update the next iterate by
            \begin{equation*}
                {x}_{k+1} = {x}_k + \alpha_k {d}_k.
            \end{equation*}
        \State \textbf{Step 5:} Set $k \gets k+1$ and return to Step 1.
    \end{algorithmic}
\end{algorithm}
\noindent
Usually, the algorithm is terminated when $\nabla g({x}_k)$ becomes sufficiently small.
Since numerical performance is significantly affected by the choice of $\beta_k$, various strategies for selecting $\beta_k$ have been extensively studied.
Well-known formulas for $\beta_k$ include those of the Fletcher-Reeves (FR) method, the Hestenes-Stiefel (HS) method, the Polak-Ribiere (PR) method, and the Dai-Yuan (DY) method~\cite{Hager2006}:
\begin{align*}
\beta_k^{FR} &= \frac{\|\nabla g ({x}_k)\|^2}{\|\nabla g ({x}_{k-1})\|^2}, \quad
\beta_k^{HS} = \frac{\nabla g ({x}_k)^\top{y}_{k-1}}{{d}_{k-1}^\top{y}_{k-1}}, \\
\beta_k^{PR} &= \frac{\nabla g ({x}_k)^\top {y}_{k-1}}{\|\nabla g ({x}_{k-1})\|^2}, \quad
\beta_k^{DY} = \frac{\|\nabla g ({x}_k)\|^2}{{d}_{k-1}^\top{y}_{k-1}},
\end{align*}
where ${y}_{k-1} = \nabla g ({x}_k) - \nabla g ({x}_{k-1})$. 
While the HS and PR methods are known to be numerically more efficient than other methods, they do not necessarily satisfy the following descent condition: for some constant $c > 0$,
\begin{equation}\label{descent_condition}
\nabla g({x}_k)^\top {d}_k \leq -c\|\nabla g({x}_k)\|^2
\end{equation}
holds for all $k>0$.
To overcome this weakness, improvements of the HS and PR methods have been actively studied~\cite{Al-Baali2015,Hager2006,Hager2005,Kobayashi2017,nakamura2013,Narushima2012,Narushima2014,Zhang2006,Zhang2006b,Zhang2007}.
For example, Zhang et al. proposed the three-term PR method~\cite{Zhang2006b} and the three-term HS method~\cite{Zhang2007}, which modify the search direction to ensure the descent condition~\eqref{descent_condition} without relying on the line search:
\begin{equation*}
{d}_k = -\nabla g({x}_k) + \beta_k^{PR}{d}_{k-1} - \frac{\nabla g({x}_k)^\top {d}_{k-1}}{\|\nabla g({x}_{k-1})\|^2}{y}_{k-1},
\end{equation*}
\begin{equation}\label{three_cg_HS_direction}
{d}_k = -\nabla g({x}_k) + \beta_k^{HS}{d}_{k-1} - \frac{\nabla g({x}_k)^\top {d}_{k-1}}{{d}_{k-1}^\top{y}_{k-1}}{y}_{k-1}.
\end{equation}
When an exact line search is performed, we have $\nabla g({x}_k)^\top {d}_{k-1} = 0$, and thus the method reduce to both the PR and HS formulas.
In this study, among such various search directions, we used a search method based on the three-term HS method~\eqref{three_cg_HS_direction} due to its straightforward convergence analysis and high computational efficiency.

\subsection{Proximal Gradient Method}\label{sec:2.3}
Next, we review proximal gradient methods, a representative technique for solving optimization problems such as~\eqref{objective_function}.
First, we impose the following assumption on the function $g$.
\begin{assumption}\label{assumption1}
The function \(g\) is a continuously differentiable function and its gradient \(\nabla g\) is Lipschitz continuous, namely, there exists a positive constant \(L\) such that
\begin{equation}\label{lipschitz_continuous_1}
\|\nabla g({u}) - \nabla g({v})\| \leq L \|{u} - {v}\|, \quad \forall {u}, {v} \in \mathbb{R}^n.
\end{equation}
\end{assumption}
\noindent
The proximal gradient methods solve the following subproblem (where $\mu_k>0$) to obtain the next iterate:
\begin{align}
{x}_{k+1}
&=\operatorname*{argmin}_{{x}\in\mathbb{R}^n}
\left\{g({x}_k)+{\nabla}g({x}_k)^\top({x}-{x}_k)
+\frac{1}{2\mu_k}\|{x}-{x}_k\|^2
+h({x})\right\}
\notag\\
&=\operatorname*{argmin}_{{x}\in\mathbb{R}^n}
\left\{h({x})
+\frac{1}{2\mu_k}
\|{x}-({x}_k-\mu_k\nabla g({x}_k))\|^2 \right\}.
\label{proximal_operator_3}
\end{align}
Using the definition of the proximal mapping~\eqref{definition:proximal_operator_formula},~\eqref{proximal_operator_3} can be transformed as:
\begin{equation}\label{proximal_operator_4}
{x}_{k+1}=\operatorname{prox}_{\mu_k h}({x}_k-\mu_k\nabla g({x}_k)).
\end{equation}
It implies that the proximal gradient method combines the steepest descent method with step size \(\mu_k\) and the proximal mapping. 
The parameter \(\mu_k\) must be chosen so that
\begin{equation}\label{PGM:decrease_condition}
g({x}_{k+1})
\leq g({x}_k)
+ \nabla g({x}_k)^\top ({x}_{k+1} - {x}_k)
+ \frac{1}{2\mu_k} \|{x}_{k+1} - {x}_k\|^2.
\end{equation}
The algorithm for the proximal gradient method is presented Algorithm~\ref{alg:PGM}.
\begin{algorithm}[htbp]
    \caption{Proximal gradient methods} \label{alg:PGM}
    \begin{algorithmic}
        \State \textbf{Step 0:} Given an initial point \({x}_0 \in \textnormal{dom}\:f\), a parameter \(\mu_{0} > 0\), and \(\theta \in (0, 1)\). Set \(k = 0\).
        \State \textbf{Step 1:} Compute:
        \[
        {x}_k^+ = \operatorname{prox}_{\mu_k h}({x}_k - \mu_k \nabla g({x}_k)).
        \]
        \State \textbf{Step 2:} If the following inequality holds, go to Step 4. Otherwise, set \(\mu_k \leftarrow \theta \mu_k\) and return to Step 1.
        \begin{equation}\label{PGM:decreace_condition_2}
            g({x}_k^+) 
            \leq g({x}_k) 
            + \nabla g({x}_k)^\top ({x}_k^+ - {x}_k) 
            + \frac{1}{2\mu_k} \|{x}_k^+ - {x}_k\|^2.
        \end{equation}
        \State \textbf{Step 3:} Update \(\mu_{k+1} \gets \mu_k\) for the parameter \(\mu_k\) that satisfies equation~\eqref{PGM:decreace_condition_2}.
        \State \textbf{Step 4:} Update the next iteration as:
        \[
        {x}_{k+1} = {x}_k^+.
        \]
        \State \textbf{Step 5:} If the stopping condition is satisfied, terminate the algorithm.
        \State \textbf{Step 6:} Set \(k \gets k+1\) and return to Step 1.
    \end{algorithmic}
\end{algorithm}\noindent
The set of stationary points $\text{zer } \partial f$ of the optimization problem~\eqref{objective_function} coincides with the set of fixed points of the iteration~\eqref{proximal_operator_4} with $\mu_k=\mu$.
This correspondence is formalized in the following proposition (see~\cite{Beck2017}):
\begin{proposition}\label{proposition_zer}
    Let $f=g+h$, where $g$ is a continuously differentiable function and $h$ is a proper lower semi-continuous convex function. Then, for any constant $\mu>0$, the following holds:
    \begin{equation*}
    {x} = \operatorname{prox}_{\mu h}({x}-\mu\nabla g({x})) 
    \Longleftrightarrow {x} \in \text{zer } \partial f.
\end{equation*}
\end{proposition}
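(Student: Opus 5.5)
The plan is to reduce both directions of the equivalence to the optimality characterization of the proximal mapping, using Proposition~\ref{prop_prox_property} and its converse, which holds for convex $h$. Fix $\mu>0$. For any $v\in\mathbb{R}^n$, the point $p=\operatorname{prox}_{\mu h}(v)$ is the unique minimizer of the strongly convex function $u\mapsto h(u)+\frac{1}{2\mu}\|u-v\|^2$. The objective is proper, lower semi-continuous and strongly convex, so $p$ exists and is unique. By Fermat's rule together with the sum rule (valid because the quadratic term is finite and differentiable everywhere), $p$ is characterized by
\begin{equation*}
p=\operatorname{prox}_{\mu h}(v)\iff \frac{1}{\mu}(v-p)\in\partial h(p).
\end{equation*}
The forward implication is exactly Proposition~\ref{prop_prox_property}. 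For the reverse implication I would argue directly from the subgradient inequality. If $\xi=\frac{1}{\mu}(v-p)\in\partial h(p)$, then for all $u$,
\begin{equation*}
h(u)+\tfrac{1}{2\mu}\|u-v\|^2\ \ge\ h(p)+\xi^\top(u-p)+\tfrac{1}{2\mu}\|u-v\|^2 .
\end{equation*}
Expanding $\|u-v\|^2=\|u-p\|^2+2(u-p)^\top(p-v)+\|p-v\|^2$, the linear terms cancel against $\xi^\top(u-p)$. This leaves the objective at $u$ bounded below by its value at $p$ plus $\frac{1}{2\mu}\|u-p\|^2$, so $p$ is the minimizer.

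Next I would apply this characterization with $v=x-\mu\nabla g(x)$ and $p=x$. It gives
\begin{equation*}
x=\operatorname{prox}_{\mu h}(x-\mu\nabla g(x))\iff \tfrac{1}{\mu}\bigl(x-\mu\nabla g(x)-x\bigr)\in\partial h(x)\iff -\nabla g(x)\in\partial h(x).
\end{equation*}
The last condition is $0\in\nabla g(x)+\partial h(x)$, which by \eqref{definition:set_stationary_points} means exactly $x\in\text{zer }\partial f$. Each step is reversible, so both directions are proved at once.

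The only delicate point is the converse of Proposition~\ref{prop_prox_property}, which the paper does not state explicitly. I would handle it with the elementary expansion above rather than invoking a general subdifferential sum rule. I would also remark that the condition $x=\operatorname{prox}(\cdot)$ forces $x\in\operatorname{dom} h$, since $\partial h(x)\neq\emptyset$ there. This makes the set in \eqref{definition:set_stationary_points} consistent with $f$ being finite at such points.
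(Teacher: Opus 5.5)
Your proposal is correct and follows the standard argument. The paper gives no proof of its own and only cites~\cite{Beck2017}, where the result is derived just as you do it: from the prox optimality characterization $p=\operatorname{prox}_{\mu h}(v)\iff \frac{1}{\mu}(v-p)\in\partial h(p)$, applied with $v=x-\mu\nabla g(x)$ and $p=x$. Your direct proof of the converse of Proposition~\ref{prop_prox_property} via the subgradient inequality is a useful self-contained addition, since the paper states only the forward inclusion, and only for real-valued $h$.
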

\noindent
Therefore, the algorithm is terminated when \( \| {x}_{k+1} - {x}_k \| \) becomes sufficiently small.  
Under Assumption~\ref{assumption1}, the sequence \( \{ {x}_k \} \) generated by Algorithm~\ref{alg:PGM} satisfies the following property (see~\cite{Beck2017}):
\begin{equation}\label{theorem:PGM_decrease_condition_formula}
f({x}_{k+1})
\leq f({x}_k) - \frac{1}{2 \mu_k} \| {x}_{k+1}-{x}_k\|^2.
\end{equation}
\noindent
\section{Proposed algorithm and its global convergence}\label{sec:3}
In this section, we introduce the proximal nonlinear conjugate gradient methods for solving~\eqref{objective_function}.
We also verify that, when the objective function satisfies $h({x})=0$ and $g({x})$ is  a strongly convex function under appropriate parameter choices, the proposed methods reduce to the nonlinear conjugate gradient methods. Finally, we show the global convergence and $R$-linear convergence properties.
\subsection{Proximal nonlinear conjugate gradient methods}\label{sec:3.1}
In this section, we propose a proximal nonlinear conjugate gradient method for solving~\eqref{objective_function}.
For this purpose, we introduce the function \( {\eta}_\mu({x}) \) as follows:
\begin{equation}\label{function_eta}
            {\eta}_\mu({x})=-\frac{1}{\mu}({x}^+ - {x}),
\end{equation}
where ${x}^+={\rm prox}_{\mu h}({x}-\mu\nabla g({x}))$ and $\mu$ is a positive constant.
This function is referred to as the \textit{forward-backward residual} in~\cite{Themelis2018,Themelis2019}.
Note that when $h({x})=0$, the relation ${\eta}_\mu({x}) = \nabla g({x})$ holds.
Furthermore, Proposition~\ref{proposition_zer} implies the following equivalence:
\begin{equation}\label{stationary_point_condition}
    {\eta}_\mu({x})=0 \Longleftrightarrow {x}\in\text{zer } \partial f.
\end{equation}
Next, we establish a fundamental relationship among the \textit{forward-backward residual} ${{\eta}_\mu}({x})$ and  the gradient of $g$, and the subgradient of $h$.
\begin{lemma}\label{lem_mu_optimality}
The following relationship holds:
\begin{equation}\label{lem_mu_optimality_formula}
   {\eta}_\mu({x}) - \nabla g ({x}) \in \partial h ({x}^+).
\end{equation}
\end{lemma}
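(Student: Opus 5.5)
The plan is to apply Proposition~\ref{prop_prox_property} at the point ${v} = {x} - \mu\nabla g({x})$ and then rewrite the resulting subgradient in terms of ${\eta}_\mu({x})$ using the definition~\eqref{function_eta}.

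First, by the definition of ${x}^+$ we have ${x}^+ = \operatorname{prox}_{\mu h}({v})$ with ${v} = {x} - \mu\nabla g({x})$. Proposition~\ref{prop_prox_property} then gives
\begin{equation*}
\frac{1}{\mu}\left( {x} - \mu\nabla g({x}) - {x}^+ \right) \in \partial h({x}^+).
\end{equation*}

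Second, we expand the left-hand side as
\begin{equation*}
\frac{1}{\mu}\left( {x} - {x}^+ \right) - \nabla g({x}).
\end{equation*}
By~\eqref{function_eta}, $-\frac{1}{\mu}({x}^+ - {x}) = \frac{1}{\mu}({x} - {x}^+) = {\eta}_\mu({x})$. Substituting this yields ${\eta}_\mu({x}) - \nabla g({x}) \in \partial h({x}^+)$, which is exactly~\eqref{lem_mu_optimality_formula}.

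The argument is essentially a one-line substitution, so there is no real obstacle. The only point needing care is that Proposition~\ref{prop_prox_property} is stated for real-valued convex $h$, whereas here $h$ is proper, lower semicontinuous, and convex. The same optimality condition holds in that setting: ${x}^+$ minimizes the strongly convex function $h(\cdot) + \frac{1}{2\mu}\|\cdot - {v}\|^2$, so $0 \in \partial h({x}^+) + \frac{1}{\mu}({x}^+ - {v})$ by the subdifferential sum rule, which applies because the quadratic term is finite and differentiable everywhere. If needed, I would justify the step this way.
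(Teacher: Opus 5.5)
Your proposal is correct and matches the paper's proof: apply Proposition~\ref{prop_prox_property} with ${v} = {x} - \mu\nabla g({x})$, split off $-\nabla g({x})$, and identify $\frac{1}{\mu}({x}-{x}^+)$ with ${\eta}_\mu({x})$ via~\eqref{function_eta}. Your extra remark is also valid and goes beyond the paper: it justifies the prox optimality condition for proper lower semicontinuous convex $h$ through the subdifferential sum rule, a point the paper leaves implicit.
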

\begin{proof}
Letting ${v}={x}-\mu \nabla g ({x})$ in~\eqref{property:proximal_operetor_formula_1}, we obtain:
\[
\frac{1}{\mu}\left({x} - \mu\nabla g ({x}) - {x}^+\right) \in \partial h ({x}^+).
\]
Thus, we have
\[
\frac{1}{\mu}({x}-{x}^+) 
-\nabla g ({x}) 
\in \partial h ({x}^+),
\]
which implies~\eqref{function_eta}.
\end{proof}
Our method extends the nonlinear conjugate gradient framework to~\eqref{objective_function} by using the \textit{forward-backward residual} ${{\eta}_\mu({x})}$ instead of the gradient $\nabla g({x})$. 
In particular, at each iteration $k$, we use
\begin{equation}\label{eta_k}
            {\eta}_k \equiv {\eta}_{{\mu}_k}({x}_k)=
            -\frac{1}{\mu_k}({x}_k^+ - {x}_k)
\end{equation}
where ${x}_k^+={\rm prox}_{\mu_k h}({x}_k-\mu_k\nabla g({x}_k))$.
The parameter $\mu_k$ is chosen to satisfy the following inequality:
\begin{equation}\label{proposed_decrease_condition}
    g({x}_{k}^+)
    \leq g({x}_k)
    + \nabla g({x}_k)^\top ({x}_{k}^+ - {x}_k)
    + \frac{1}{2\mu_k} \|{x}_{k}^+ - {x}_k\|^2.
\end{equation}
This inequality is the same as~\eqref{PGM:decrease_condition} in the proximal gradient method.
Moreover, the condition is always satisfied whenever $\frac{1}{\mu_k}>L$.
The iterate is updated by:
\begin{equation*}
    {x}_{k+1} = {x}_k + \alpha_k{d}_k,
\end{equation*}
where $\alpha_k > 0$ is the step size and ${d}_k$ is the search direction.
For the search direction, we focus on the three-term HS method~\eqref{three_cg_HS_direction}, which is one of the most effective nonlinear conjugate gradient methods introduced in Section~\ref{sec:2.2}.
By using $\eta_{k}$, the search direction ${d}_k$ is given by
\begin{equation}\label{d_k}
    {d}_k=
    \begin{cases}
        -{\eta}_k,&k=0,\\
        -{\eta}_k + \beta_k {d}_{k-1} - \gamma_k {y}_{k-1},&k\geq1 ,
    \end{cases}
\end{equation}
where the parameters $\beta_k$ and $\gamma_k$ are defined by
\begin{equation}\label{beta_gamma}
    \beta_k = \frac{{\eta}_k^\top{y}_{k-1}}{{d}_{k-1}^\top{z}_{k-1}},
    \quad
    \gamma_k = \frac{{\eta}_k^\top{d}_{k-1}}{{d}_{k-1}^\top{z}_{k-1}},
\end{equation}
and ${y}_{k-1}$ is
\begin{equation}\label{y_k-1}
    {y}_{k-1} = {\eta}_k - {\eta}_{k-1}.
\end{equation}
To ensure global convergence, we define the corrected vector ${z}_{k-1}$ by modifying ${y}_{k-1}$ as follows:
\begin{equation*}
    {z}_{k-1} = {y}_{k-1} + \nu_k {s}_{k-1},
\end{equation*}
where $\nu_k$ is a non-negative parameter, and ${s}_{k-1}$ is defined by 
\begin{equation}\label{s_k-1}
    {s}_{k-1} = {x}_k -{x}_{k-1} (= \alpha_{k-1} {d}_{k-1}).
\end{equation}
Furthermore, the parameter ${\nu}_k$ is chosen such that
\begin{equation}\label{condition_s_k}
    {s}_{k-1}^\top {z}_{k-1} \geq \bar{\nu} \| {s}_{k-1} \|^2
\end{equation}
holds for some positive constant $\bar{\nu}$.
For example, for a positive constant $\hat{\nu}$, we can set 
\begin{equation}\label{condition_s_k_example}
    \nu_k =
    \begin{cases} 
        0, & \text{if } {s}_{k-1}^\top {y}_{k-1} \geq \hat{\nu} \|{s}_{k-1}\|^2, \\
        \max \left\{ 0, -\frac{{s}_{k-1}^\top {y}_{k-1}}{{s}_{k-1}^\top {s}_{k-1}} \right\}+ \hat{\nu} , & \text{otherwise}.
    \end{cases}
\end{equation}
If ${s}_{k-1}^\top {y}_{k-1} \geq \hat{\nu} \|{s}_{k-1}\|^2$,~\eqref{condition_s_k} holds with $\bar{\nu}=\hat{\nu}$. 
On the other hand, if ${s}_{k-1}^\top {y}_{k-1} < \hat{\nu} \|{s}_{k-1}\|^2$, then we have
\[
{s}_{k-1}^\top {z}_{k-1} = \max \{ {s}_{k-1}^\top {y}_{k-1} , 0 \} 
+ \hat{\nu} \|{s}_{k-1}\|^2
\geq \hat{\nu} \|{s}_{k-1}\|^2
\]
which implies~\eqref{condition_s_k} with $\bar{\nu}=\hat{\nu}$.
To determine the step size $\alpha_k$, we employ a two-stage line search strategy.
First, we determine the trial step size $t$ by finding the largest value from the sequence $\{1, \theta, \theta^2, \dots \}$ with $\theta \in (0, 1)$ that satisfies the following condition:
\begin{equation}\label{conditional_branch}
t\nabla g({x}_k)^\top {d}_k + h({x}_k+t{d}_k) - h({x}_k)
\leq -tT\|{\eta}_k\|^2,
\end{equation}
where $T > \delta$ and $\delta\in (0,1)$. If the trial step size satisfies $t > \bar{t}$ for a predefined threshold $\bar{t} \in (0, 1)$, we proceed to the second stage.
In this stage, we set $\alpha_k = t$ as the initial candidate and perform backtracking by iteratively updating $\alpha_k \leftarrow \tau \alpha_k$, where $\tau \in (0, 1)$, until the Armijo condition is satisfied:
\begin{equation}\label{linesearch_condition_for_proposed}
    f({x}_k + \alpha_k {d}_k) 
    \leq f({x}_k) + \delta\alpha_k{\eta}_k^\top{d}_k .
\end{equation}
Once $\alpha_k$ is determined, we update the iterate by ${x}_{k+1} = {x}_k + \alpha_k {d}_k$.
If no $t > \bar{t}$ satisfies condition~\eqref{conditional_branch}, we instead set $x_{k+1} = x_k^+$, thereby switching to the proximal gradient method.
Based on these arguments, we propose the Algorithm~\ref{alg:proposed}.
\begin{algorithm}[htbp]
    \caption{Proximal nonlinear conjugate gradient method} \label{alg:proposed}
    \begin{algorithmic}
        \State \textbf{Step 0:} 
        Given an initial point ${x}_0 \in \textnormal{dom}\:f$, and parameters $\mu_{-1}>0$, $\kappa\in(0,1)$, $\bar{\nu}>0$, $\delta\in(0,1)$, $T>\delta$, $\theta\in(0,1)$, $\bar t\in(0,1)$, $\tau\in(0,1)$. Set $k=0$.
        \State \textbf{Step 1:} 
        Compute ${x}_k^+$ by
        \begin{equation*}
            {x}_k^+={\rm prox}_{\mu_k h}({x}_k-\mu_k\nabla g({x}_k)),
        \end{equation*}
        where $\mu_k$ is the largest value among $\{\mu_{k-1},\mu_{k-1}\kappa,\mu_{k-1}\kappa^2,\cdots\}$ that satisfies~\eqref{proposed_decrease_condition}.
        \State \textbf{Step 2:} If the stopping condition is satisfied, terminate the algorithm.
        \State \textbf{Step 3:} Compute the search direction ${d}_k$ by (\ref{d_k}).
        \State \textbf{Step 4:} Find the largest trial step size $t \in \{1, \theta, \theta^2, \dots \}$ that satisfies condition~\eqref{conditional_branch}.
        If such a $t$ exists and satisfies $t > \bar{t}$, set $\alpha_{\text{init}} = t$ and go to Step 4.1; otherwise, go to Step 4.2.
        \State \textbf{\; Step 4.1:} Let $\alpha_k$ be the largest value among $\{\alpha_{\text{init}}, \tau \alpha_{\text{init}}, \tau^2 \alpha_{\text{init}}, \dots \}$ that satisfies the Armijo condition~\eqref{linesearch_condition_for_proposed}. Update the iterate by:
        \begin{equation*}
            {x}_{k+1} = {x}_k + \alpha_k {d}_k,
        \end{equation*}
        and go to Step~5.
        \State \textbf{\; Step 4.2:} Set ${d}_k=-{\eta}_k$, and update the iterate by:
        \[
        {x}_{k+1} = {x}_k^+ ,
        \]
        and go to Step~5.
        \State \textbf{Step 5:} Set $k \leftarrow k+1$ and return to Step 1.
    \end{algorithmic}
\end{algorithm}
\noindent
We terminate the Algorithm~\ref{alg:proposed} if $\|{x}_k^+ -{x}_k\|$ is sufficiently small.
This is justified by the fact that $x_k$ is a stationary point of~\eqref{objective_function} if and only if $\eta_k = 0$, which is equivalent to $x_k^+ - x_k = 0$ according to \eqref{stationary_point_condition}.
From the fact that $\mu_k$ is non-decreasing and from~\eqref{proposed_decrease_condition}, it follows that $\mu_k$ becomes constant for sufficiently large $k$.
\begin{remark}
If the Lipschitz constant $L_g$ of the gradient $\nabla g$ is known prior to computation, the search for $\mu_k$ in Step~1 is not required.
Specifically, by setting a fixed step size $\mu_k = \bar{\mu} \in (0, 1/L_g]$ for all $k$, the decrease condition~\eqref{proposed_decrease_condition} is automatically satisfied without backtracking line search.
\end{remark}

Next, we show that when the objective function is a strongly convex function under appropriate parameter choices, the proposed method reduce to the three-term HS method~\eqref{nonlinear_cg:direction}. 
First, we make the following assumption for the objective function.
\begin{assumption}\label{assumption_strong_convex}
    The function $g$ is a strongly convex. 
\end{assumption}
In other words, the following inequality holds for some positive constant $m > 0$:
\begin{equation}\label{formula_strongly_convex}
(\nabla g({x}) - \nabla g({y}))^\top ({x} - {y}) \geq m \|{x} - {y}\|^2.
\end{equation}
Specifically, we consider the case where \( g({x}) \) is a strongly convex function and \( h({x}) = 0 \) in problem~\eqref{objective_function}, which implies that ${\eta}_k = \nabla g ({x}_k)$.
Furthermore, the inequality in condition~\eqref{condition_s_k_example} becomes
\begin{equation*}
    ({x}_k - {x}_{k-1})^\top (\nabla g ({x}_k) - \nabla g ({x}_{k-1})) \geq \bar{\nu} \|{x}_k - {x}_{k-1}\|^2.
\end{equation*}
Since the equation~\eqref{formula_strongly_convex}, we have:
\begin{equation*}
    ({x}_k - {x}_{k-1})^\top (\nabla g ({x}_k) - \nabla g ({x}_{k-1})) \geq m \|{x}_k - {x}_{k-1}\|^2.
\end{equation*}
When $\bar{\nu} \leq m$, the condition~\eqref{condition_s_k_example} is always satisfied with \( \nu_k = 0 \).
Therefore, we obtain ${z}_{k-1} = {y}_{k-1}$. 
In this case, the parameters \( \beta_k \) and \( \gamma_k \) in Algorithm~\ref{alg:proposed} can be rewritten as follows:
\begin{equation*}
    \beta_k = \frac{\nabla g ({x}_k)^\top {y}_{k-1}}{{d}_{k-1} ^\top {y}_{k-1}},\quad
    \gamma_k = \frac{\nabla g ({x}_k)^\top {d}_{k-1}}{{d}_{k-1} ^\top  {y}_{k-1}}.
\end{equation*}
Therefore, the search direction coincides with that of the three-term HS method.
Moreover, the condition~\eqref{conditional_branch} can be rewritten as  
\[
\nabla g ({x}_k)^\top {d}_{k} \leq -T \| \nabla g ({x}_k) \|^2.
\]
Given the descent property $\nabla g(x_k)^\top d_k = -\|\nabla g(x_k)\|^2$, this condition is inherently satisfied for any $T \leq 1$.
Consequently, when the objective function is a strongly convex function and the parameters \( \bar{\nu} \) and \( T \) are selected that $\bar{\nu} \leq m$ and \( T \leq 1 \), then Algorithm~\ref{alg:proposed} becomes equivalent to the three-term HS method.
\subsection{Global convergence}\label{sec:3.2}
In this section, we establish the global convergence of the sequence $\{x_k\}$ generated by Algorithm~\ref{alg:proposed}.
We first derive several key inequalities used in the analysis.
From~\eqref{lipschitz_continuous_1} that for all \({u}, {v} \in \mathbb{R}^n\)
\begin{equation}\label{lipschitz_continuous_2}
g({u}) \leq g({v}) + \nabla g({v})^\top ({u} - {v}) + \frac{L}{2} \|{u} - {v}\|^2.
\end{equation}
As mentioned above, $\mu_k$ becomes constant for sufficiently large $k$.
Hence, without loss of generality, we assume that $\mu_k = \mu > 0$ for all $k \ge 0$.
The following lemma corresponds the descent condition~\eqref{descent_condition}.
\begin{lemma}\label{lem_descent_condition}
The search direction ${d}_k$ given by~\eqref{d_k} satisfies the following condition:
\begin{equation*}
    {\eta}_k^\top{d}_k = -\|{\eta}_k\|^2.
\end{equation*}
\end{lemma}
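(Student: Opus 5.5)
The plan is a direct computation from the definition \eqref{d_k}, split according to which branch produced $d_k$.

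First, if $k=0$, or if Step~4.2 of Algorithm~\ref{alg:proposed} was executed so that $d_k=-\eta_k$, then trivially $\eta_k^\top d_k=-\|\eta_k\|^2$. Thus only the case $k\ge1$ with $d_k=-\eta_k+\beta_k d_{k-1}-\gamma_k y_{k-1}$ remains.

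For that case, I take the inner product of the three-term direction with $\eta_k$ and substitute \eqref{beta_gamma}:
\begin{equation*}
\eta_k^\top d_k=-\|\eta_k\|^2+\frac{(\eta_k^\top y_{k-1})(\eta_k^\top d_{k-1})}{d_{k-1}^\top z_{k-1}}-\frac{(\eta_k^\top d_{k-1})(\eta_k^\top y_{k-1})}{d_{k-1}^\top z_{k-1}}.
\end{equation*}
The two correction terms are identical scalars with opposite signs, so they cancel and give the claim. The cancellation is the same one as in the original three-term HS method \eqref{three_cg_HS_direction}. The only role of $z_{k-1}$ is that it appears in both denominators, so replacing $y_{k-1}$ by $z_{k-1}$ in the denominators does not affect the cancellation.

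The only genuine point to check is that $\beta_k,\gamma_k$ are well defined, i.e.\ $d_{k-1}^\top z_{k-1}\neq0$. This is where I would use \eqref{condition_s_k}. Since $s_{k-1}=\alpha_{k-1}d_{k-1}$, we have
\[
d_{k-1}^\top z_{k-1}=\alpha_{k-1}^{-1}s_{k-1}^\top z_{k-1}\ge\bar\nu\alpha_{k-1}\|d_{k-1}\|^2>0,
\]
provided $d_{k-1}\neq0$ and $x_k\neq x_{k-1}$. These conditions hold whenever the algorithm has not stopped at a stationary point. One caveat is when the previous step came from Step~4.2: there $s_{k-1}=x_k-x_{k-1}=-\mu\eta_{k-1}=\mu d_{k-1}$, so the same positivity argument applies with $\mu$ in place of $\alpha_{k-1}$. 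I expect this bookkeeping of the well-definedness, rather than the algebra, to be the only delicate step.
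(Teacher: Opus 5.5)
Your proposal is correct and follows the paper's proof: the case $k=0$, and also the Step~4.2 reset $d_k=-\eta_k$, is trivial, and for the three-term direction the $\beta_k$ and $\gamma_k$ terms cancel after substituting \eqref{beta_gamma}. Your additional check that $d_{k-1}^\top z_{k-1}>0$ via \eqref{condition_s_k}, including the observation that $s_{k-1}=\mu d_{k-1}$ after a Step~4.2 update, is sound bookkeeping that the paper leaves implicit here; the paper only uses the same bound later, in the proof of Lemma~\ref{lem_dk}.
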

\begin{proof}
Since the case for $k = 0$ is trivial, we focus on the case where $k \geq 1$.
It follows from~\eqref{beta_gamma}, we have
\begin{align*}
    {\eta}_k^\top{d}_k &= {\eta}_k^\top(-{\eta}_k + \beta_k {d}_{k-1} - \gamma_k{y}_{k-1}) \\
   &= -\|{\eta}_k\|^2 
   + \frac{{\eta}_k^\top{y}_{k-1}}{{d}_{k-1}^\top{z}_{k-1}}
   {\eta}_k^\top{d}_{k-1}
    - \frac{{\eta}_k^\top{d}_{k-1}}{{d}_{k-1}^\top{z}_{k-1}}
    {\eta}_k^\top{y}_{k-1} \\
   &= -\|{\eta}_k\|^2.
\end{align*} 
\end{proof}
Using this lemma, the Armijo condition~\eqref{linesearch_condition_for_proposed} can be rewritten by
\begin{equation}\label{linesearch_condition_for_proposed_2}
    f({x}_k + \alpha_k {d}_k) 
    \leq f({x}_k) - \delta\alpha_k \|{\eta}_k\|^2.
\end{equation}
Next, we provide an evaluation of the $\|{y}_{k-1}\|$.
\begin{lemma}\label{lem_yk}
Suppose Assumption \textnormal{\ref{assumption1}} is satisfied.
Then, there exists a positive constant $K$  such that the following inequality holds:
\begin{equation} \label{lem_yk_formula}
       \|{y}_{k-1}\| \leq K \| {s}_{k-1} \|.
\end{equation}
\end{lemma}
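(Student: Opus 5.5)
The plan is to show that the forward-backward residual $\eta_\mu(\cdot)$ is Lipschitz continuous on $\mathbb{R}^n$ for the fixed $\mu$ (recall that without loss of generality $\mu_k=\mu$ for all $k$). Then $y_{k-1}=\eta_\mu(x_k)-\eta_\mu(x_{k-1})$ can be bounded directly by a constant multiple of $\|x_k-x_{k-1}\|=\|s_{k-1}\|$.

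Concretely, by \eqref{function_eta} and \eqref{y_k-1} we write
\begin{equation*}
y_{k-1} = \frac{1}{\mu}\Bigl( (x_k - x_{k-1}) - \bigl(\operatorname{prox}_{\mu h}(x_k-\mu\nabla g(x_k)) - \operatorname{prox}_{\mu h}(x_{k-1}-\mu\nabla g(x_{k-1}))\bigr)\Bigr).
\end{equation*}
The triangle inequality gives
\begin{equation*}
\|y_{k-1}\| \le \frac{1}{\mu}\|s_{k-1}\| + \frac{1}{\mu}\bigl\|\operatorname{prox}_{\mu h}(x_k-\mu\nabla g(x_k)) - \operatorname{prox}_{\mu h}(x_{k-1}-\mu\nabla g(x_{k-1}))\bigr\|.
\end{equation*}
Nonexpansiveness of the proximal mapping (Proposition~\ref{prop_prox_nonexpansive}) bounds the second norm by
\begin{equation*}
\|(x_k-x_{k-1}) - \mu(\nabla g(x_k)-\nabla g(x_{k-1}))\| \le (1+\mu L)\|s_{k-1}\|,
\end{equation*}
where the last step uses Assumption~\ref{assumption1}. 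Combining these, we get $\|y_{k-1}\| \le \frac{2+\mu L}{\mu}\|s_{k-1}\|$. So $K=(2+\mu L)/\mu = 2/\mu + L$ works.

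The only step requiring care is that the same $\mu$ is used at iterates $k$ and $k-1$; this is the main obstacle. If one does not rely on the "without loss of generality" reduction, $\eta_k$ and $\eta_{k-1}$ involve different $\mu_k\ge\mu_{k-1}$ in general, and the difference is not directly a Lipschitz increment. I would handle this by invoking the paper's observation that $\mu_k$ changes only finitely often and thus is eventually constant. For the finitely many early indices one can simply enlarge $K$, provided $s_{k-1}\neq 0$. If $s_{k-1}=0$ occurred with a change of $\mu$, one would additionally need a bound on $\|\eta_{\mu}(x)-\eta_{\mu'}(x)\|$. I would avoid this by adopting the paper's standing convention $\mu_k=\mu$ for all $k$, so that the Lipschitz argument above applies uniformly and gives a $K$ depending only on $\mu$ and $L$.
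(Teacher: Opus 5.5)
Your proof is correct and matches the paper's argument: you apply the triangle inequality to the definition of $\eta_\mu$, then nonexpansiveness of $\operatorname{prox}_{\mu h}$ and Lipschitz continuity of $\nabla g$, and obtain the same constant $K = 2/\mu + L$ under the same fixed-$\mu$ convention. One small slip in your side remark: the backtracking in Step~1 only shrinks $\mu$, so $\mu_k \le \mu_{k-1}$ (not $\ge$), though this does not affect the argument.
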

\begin{proof}
By \eqref{property:proximal_operetor_formula_2}, \eqref{lipschitz_continuous_1}, \eqref{eta_k} and \eqref{y_k-1}, we have
\begin{align*}
\|y_{k-1}\|
&= \|\eta_k - \eta_{k-1}\| \\
&\leq \frac{1}{\mu}\|{x}_k - {x}_{k-1}\| 
    + \frac{1}{\mu}\|\operatorname{prox}_{\mu h}({x}_k - \mu\nabla g({x}_k)) 
    - \operatorname{prox}_{\mu h}({x}_{k-1} - \mu\nabla g({x}_{k-1}))\| \\
    &\leq \frac{1}{\mu}\|{x}_k - {x}_{k-1}\|
    + \frac{1}{\mu}\|({x}_k - \mu\nabla g({x}_k)) 
    - ({x}_{k-1} - \mu\nabla g({x}_{k-1}))\| \\
    &\leq \frac{2}{\mu}\|{x}_k - {x}_{k-1}\| + \|\nabla g({x}_k)- \nabla g({x}_{k-1})\|\\
    &\leq \left(\frac{2}{\mu}+L\right)\| {x}_k - {x}_{k-1} \|.
\end{align*}
This proves~\eqref{lem_yk_formula} with $K=\frac{2}{\mu}+L$.
\end{proof}
Next, we provide an evaluation of the $\|{d}_{k}\|$.
\begin{lemma}\label{lem_dk}
Suppose Assumption~\ref{assumption1} is satisfied.
Then, there exists a positive constant $P$ such that
\begin{equation}\label{lem_dk_formula}
    \|{d}_k\| \leq P\|{\eta}_k\|.
\end{equation}
\end{lemma}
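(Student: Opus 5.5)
Since $d_0=-\eta_0$, the case $k=0$ is immediate, and the same holds whenever Step~4.2 resets $d_k=-\eta_k$. So I concentrate on $k\ge1$ with $d_k=-\eta_k+\beta_k d_{k-1}-\gamma_k y_{k-1}$. The triangle inequality and Cauchy--Schwarz give
\[
\|d_k\|\le \|\eta_k\|+|\beta_k|\,\|d_{k-1}\|+|\gamma_k|\,\|y_{k-1}\|
\le \|\eta_k\|+\frac{2\|\eta_k\|\,\|y_{k-1}\|\,\|d_{k-1}\|}{|d_{k-1}^\top z_{k-1}|}.
\]
The key step is a lower bound on the denominator. Since $s_{k-1}=\alpha_{k-1}d_{k-1}$ with $\alpha_{k-1}>0$ (this is the Step~4.1 case at iteration $k-1$), dividing condition~\eqref{condition_s_k} by $\alpha_{k-1}$ gives
\[
d_{k-1}^\top z_{k-1}\ \ge\ \bar\nu\,\alpha_{k-1}\|d_{k-1}\|^2=\bar\nu\,\|s_{k-1}\|\,\|d_{k-1}\|.
\]

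For the numerator, Lemma~\ref{lem_yk} gives $\|y_{k-1}\|\le K\|s_{k-1}\|$. Substituting both bounds, the factors $\|s_{k-1}\|\,\|d_{k-1}\|$ cancel, and I get
\[
\|d_k\|\le \|\eta_k\|+\frac{2K\|\eta_k\|\,\|s_{k-1}\|\,\|d_{k-1}\|}{\bar\nu\,\|s_{k-1}\|\,\|d_{k-1}\|}=\Bigl(1+\frac{2K}{\bar\nu}\Bigr)\|\eta_k\|.
\]
So~\eqref{lem_dk_formula} holds with $P=1+2K/\bar\nu=1+\frac{2}{\bar\nu}\bigl(\frac{2}{\mu}+L\bigr)$. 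This constant is valid because we have reduced to $\mu_k=\mu$ for all $k$.

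The main obstacle is degenerate cases, where $s_{k-1}=0$ or the previous step was a proximal-gradient step (Step~4.2). In the degenerate case $s_{k-1}=0$, the point $x_{k-1}$ is already stationary and the algorithm has stopped. In Step~4.2, $x_k=x_{k-1}^+=x_{k-1}-\mu\eta_{k-1}$ and the paper sets $d_{k-1}=-\eta_{k-1}$, so $s_{k-1}=\mu d_{k-1}$ is still a positive multiple of $d_{k-1}$. The same argument then applies with $\alpha_{k-1}$ replaced by $\mu$, and~\eqref{condition_s_k} is still enforced through the choice of $\nu_k$. I would state this convention explicitly, namely $s_{k-1}=\alpha_{k-1}d_{k-1}$ with $\alpha_{k-1}=\mu$ in the Step~4.2 case, so that the cancellation above is uniformly valid.
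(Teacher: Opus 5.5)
Your proposal is correct and follows the paper's proof essentially line for line. It uses the same triangle-inequality/Cauchy--Schwarz bound, the same lower bound on $d_{k-1}^\top z_{k-1}$ obtained by dividing \eqref{condition_s_k} by $\alpha_{k-1}$, and Lemma~\ref{lem_yk}, giving the same constant $P=1+2K/\bar\nu$; your explicit treatment of the case where iteration $k-1$ used Step~4.2 (there $s_{k-1}=-\mu\eta_{k-1}=\mu d_{k-1}$, so one may take $\alpha_{k-1}=\mu$) fills in a detail the paper leaves implicit.
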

\begin{proof}
The case $k=0$ is trivial. 
For $k \ge 1$, it follows from~\eqref{d_k} that
\begin{align*}\label{d_k_1}
    \|{d}_k\| 
    &\leq \| {\eta}_k \| + \|\beta_k{d}_{k-1}\| + \| \gamma_k{y}_{k-1} \| \notag\\
    &\leq \| {\eta}_k \| 
    + \frac{\|{\eta}_k\| \|{y}_{k-1}\|}
    {\left|{d}_{k-1}^\top {z}_{k-1}\right|}  \| {d}_{k-1} \| 
    + \frac{\|{\eta}_k\| \|{d}_{k-1}\|}{\left|{d}_{k-1}^\top {z}_{k-1}\right|}  \|{y}_{k-1} \|\notag \\
    &= \| {\eta}_k \| 
    + 2 \, \frac{\|{\eta}_k\| \|{y}_{k-1}\| \| {d}_{k-1} \| }
    {\left|{d}_{k-1}^\top {z}_{k-1}\right|}.
\end{align*}
It follows from~\eqref{s_k-1} and \eqref{condition_s_k} that
\begin{align*}
    {d}_{k-1}^\top {z}_{k-1} 
    = \frac{1}{\alpha_{k-1}} ({x}_k - {x}_{k-1})^\top {z}_{k-1} 
    = \frac{1}{\alpha_{k-1}} {s}_k^\top {z}_{k-1} 
    &\geq \frac{\bar \nu}{\alpha_{k-1}} \|{s}_{k-1}\|^2.
\end{align*}
Therefore, we have
\begin{align*}
    \|{d}_k\|
    &\leq \| {\eta}_k \| 
    + 2 \, \frac{\|{\eta}_k\| \|{y}_{k-1}\| \| {d}_{k-1} \| }
    {\left|{d}_{k-1}^\top {z}_{k-1}\right|} \\
    &\leq \| {\eta}_k \| 
    + \frac{2}{\alpha_{k-1}}  \frac{\|{\eta}_k\| \|{y}_{k-1}\| \| {s}_{k-1} \| }
    {\frac{\bar{\nu}}{\alpha_{k-1}}\|{s}_{k-1}\|^2}\\
    &\leq \left(1+\frac{2K}{\bar{\nu}}\right) \| {\eta}_k \|,
\end{align*}
where the last inequality follows from~\eqref{lem_yk_formula}.
This proves~\eqref{lem_dk_formula} with $P=1+\frac{2K}{\bar{\nu}}$.
\end{proof}
The following lemma is useful to guarantee the well-definedness of the line search.
\begin{lemma}\label{lem_g_lipchitz}
Suppose Assumption~\ref{assumption1} is satisfied, and let $\alpha \in (0,1]$. 
For any ${x}_k,{d}_k \in \mathbb{R}^n$, the following holds:
\begin{equation}\label{lem_g_lipchitz_formula}
    f({x}_k+\alpha {d}_k) 
    \leq f({x}_k) 
    +\alpha \nabla g({x}_k)^\top{d}_k
    + h({x}_k+\alpha {d}_k) 
    - h({x}_k)
    +\frac{L}{2}\|\alpha {d}_k\|^2
\end{equation}
\end{lemma}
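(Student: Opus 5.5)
The inequality follows by writing $f=g+h$ at the trial point $x_k+\alpha d_k$ and bounding only the smooth part with the descent lemma~\eqref{lipschitz_continuous_2}; the nonsmooth part is carried along unchanged.

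First, apply~\eqref{lipschitz_continuous_2}, which is a consequence of Assumption~\ref{assumption1}, with $u = x_k+\alpha d_k$ and $v = x_k$. This gives
\begin{equation*}
g(x_k+\alpha d_k) \le g(x_k) + \alpha \nabla g(x_k)^\top d_k + \frac{L}{2}\|\alpha d_k\|^2 .
\end{equation*}

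Next, add $h(x_k+\alpha d_k)$ to both sides. Then insert $h(x_k)-h(x_k)=0$ on the right-hand side. The left-hand side becomes $f(x_k+\alpha d_k)$. The right-hand side regroups as $g(x_k)+h(x_k)=f(x_k)$ plus the remaining terms $\alpha\nabla g(x_k)^\top d_k + h(x_k+\alpha d_k)-h(x_k)+\frac{L}{2}\|\alpha d_k\|^2$. This is exactly~\eqref{lem_g_lipchitz_formula}.

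The only point needing care is the extended-valued $h$, since both $h$-values must be handled consistently. We have $h(x_k)<\infty$ because $x_k\in\operatorname{dom} f$: the initial point lies in the domain, and each later iterate either satisfies a descent condition or is a proximal point, so it stays in the domain. If $h(x_k+\alpha d_k)=+\infty$, both sides are $+\infty$ and the inequality holds trivially with the usual convention. Otherwise, all quantities are finite and the algebra above is valid.

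The restriction $\alpha\in(0,1]$ is not actually needed for this inequality. It is kept only to match how the lemma will be used in the line search analysis.
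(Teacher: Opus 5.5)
Your proof is correct and matches the paper's argument: apply the descent lemma \eqref{lipschitz_continuous_2} with $u=x_k+\alpha d_k$ and $v=x_k$, add $h(x_k+\alpha d_k)$ to both sides, and regroup using $g(x_k)=f(x_k)-h(x_k)$. Your two extra remarks are accurate refinements that the paper leaves implicit: the right-hand side is only well defined when $x_k\in\textnormal{dom}\:f$, which holds for the iterates, and the restriction $\alpha\in(0,1]$ plays no role in this inequality.
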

\begin{proof}
It follows from~\eqref{lipschitz_continuous_2} that
\begin{align*}
    f({x}_k+\alpha {d}_k) 
    &= g({x}_k+\alpha {d}_k) + h({x}_k+\alpha {d}_k) \\
    &\leq g({x}_k) +\alpha \nabla g({x}_k)^\top{d}_k
    +\frac{L}{2}\|\alpha {d}_k\|^2 + h({x}_k+\alpha {d}_k).
\end{align*}
\end{proof}
The following lemma guarantees that if there exists a trial step size $t$ satisfying condition~\eqref{conditional_branch} during the line search, then the same condition is preserved for any smaller step size.
\begin{lemma}\label{lem:armijo_convex}
Suppose Assumption~\ref{assumption1} is satisfied.
If the inequality $\nabla g({x}_k)^\top t {d}_k + h({x}_k + t {d}_k) - h({x}_k) \leq -T t \|{\eta}_k\|^2$ holds for some $t > 0$, then for any $t' \in (0, t]$, the following inequality also holds:
\begin{equation*}
    \nabla g({x}_k)^\top t' {d}_k + h({x}_k + t' {d}_k) - h({x}_k) 
    \leq - t'T \|{\eta}_k\|^2.
\end{equation*}
\end{lemma}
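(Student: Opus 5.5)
The idea is convexity of the one-dimensional function
\[
\phi(s) := \nabla g({x}_k)^\top s\,{d}_k + h({x}_k + s{d}_k) - h({x}_k), \qquad s \ge 0 .
\]
It is convex because the first term is linear in $s$ and $s \mapsto h({x}_k + s{d}_k)$ is the composition of the convex function $h$ with an affine map. Moreover $\phi(0)=0$. The statement asks us to transfer the bound $\phi(t) \le -tT\|{\eta}_k\|^2$ to every $t' \in (0,t]$. This is the standard fact that for a convex $\phi$ with $\phi(0)=0$, the difference quotient $s \mapsto \phi(s)/s$ is nondecreasing on $(0,\infty)$.

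\textbf{Step 1.} For $t' \in (0,t]$ set $\lambda := t'/t \in (0,1]$. Then $t' = \lambda t + (1-\lambda)\cdot 0$, and convexity of $\phi$ gives
\[
\phi(t') \le \lambda \phi(t) + (1-\lambda)\phi(0) = \frac{t'}{t}\,\phi(t).
\]

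\textbf{Step 2.} Insert the hypothesis $\phi(t) \le -tT\|{\eta}_k\|^2$. Since $t'/t>0$, the inequality direction is preserved, and we get $\phi(t') \le -t'T\|{\eta}_k\|^2$. Written out, this is exactly the claimed inequality.

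To avoid any appeal to an abstract convexity property of $\phi$, we can instead apply the convexity of $h$ directly. Because ${x}_k + t'{d}_k = \lambda({x}_k + t{d}_k) + (1-\lambda){x}_k$, we have
\[
h({x}_k + t'{d}_k) \le \lambda h({x}_k + t{d}_k) + (1-\lambda)h({x}_k).
\]
Subtracting $h({x}_k)$ from both sides and adding the linear term $t'\nabla g({x}_k)^\top{d}_k = \lambda\, t\nabla g({x}_k)^\top {d}_k$ gives $\phi(t') \le \lambda\phi(t)$ directly.

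The only delicate point is the extended-valued nature of $h$. The hypothesis forces $h({x}_k + t{d}_k) < \infty$, and ${x}_k \in \operatorname{dom} h$ since the iterates stay in $\operatorname{dom} f$. Convexity of the domain then gives ${x}_k + t'{d}_k \in \operatorname{dom} h$, so every quantity above is finite and the arithmetic is legitimate. Assumption~\ref{assumption1} plays no real role in this argument; only the convexity of $h$ is used.
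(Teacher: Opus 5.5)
Your proof is correct and follows the paper's argument essentially verbatim: write $x_k + t'd_k$ as the convex combination $\lambda(x_k + td_k) + (1-\lambda)x_k$ with $\lambda = t'/t$, use convexity of $h$ to get $h(x_k+t'd_k)-h(x_k) \le \lambda\bigl(h(x_k+td_k)-h(x_k)\bigr)$, then add the linear term and apply the hypothesis. Your remarks on finiteness of $h$ along the segment and on Assumption~\ref{assumption1} being unused are accurate details that the paper leaves implicit.
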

\begin{proof}
Let $t' \in (0, t]$ and define $\lambda = {t'}/{t}$, which implies $\lambda \in (0, 1]$. 
We can express ${x}_k + t' {d}_k$ as a convex combination of ${x}_k$ and ${x}_k + t {d}_k$ as follows:
\begin{align*}
    {x}_k + t' {d}_k 
    = (1-\lambda){x}_k + \lambda({x}_k + t {d}_k).
\end{align*}
Since $h$ is convex, we have
\begin{align*}
    h({x}_k + t' {d}_k) 
    &= h\left((1-\lambda){x}_k + \lambda({x}_k + t {d}_k)\right) \\
    &\leq  h({x}_k) + \lambda\left(h({x}_k + t {d}_k) - h({x}_k)\right).
\end{align*}
Subtracting $h({x}_k)$ from both sides and substituting $\lambda = {t'}/{t}$, we obtain:
\begin{equation*}
    h({x}_k + t' {d}_k) - h({x}_k) 
    \leq \frac{t'}{t} (h({x}_k + t {d}_k) - h({x}_k)). 
\end{equation*}
Then considering the assumed inequality, we obtain
\begin{align*}
    \nabla g({x}_k)^\top t' {d}_k + h({x}_k + t' {d}_k) - h({x}_k) 
    &\leq \nabla g({x}_k)^\top t' {d}_k + \frac{t'}{t} (h({x}_k + t {d}_k) - h({x}_k)) \\
    &= \frac{t'}{t} \left( \nabla g({x}_k)^\top t {d}_k + h({x}_k + t {d}_k) - h({x}_k) \right)\\
    &\leq -t'T\|{\eta}_k\|^2.
\end{align*}
\end{proof}
The following lemma shows that the step size $\alpha_k$ is bounded away from zero.
\begin{lemma}\label{lem_step_existence}
Suppose Assumption~\ref{assumption1} holds. 
When Step~4.1 is selected in Algorithm~\ref{alg:proposed}, there exists a step size $\alpha_k$ that satisfies the line search condition~\eqref{linesearch_condition_for_proposed}. 
Furthermore, it holds that
\[
    \overline{\alpha} \equiv \tau \min \left\{ \alpha_{\mathrm{init}},\frac{2(T-\delta)}{LP^2} \right\} \leq \alpha_k \leq \alpha_{\mathrm{init}} \ (\leq 1).
\]
\end{lemma}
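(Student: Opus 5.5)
The argument combines Lemma~\ref{lem_g_lipchitz}, Lemma~\ref{lem:armijo_convex}, Lemma~\ref{lem_descent_condition} and Lemma~\ref{lem_dk}. The goal is to show that the Armijo condition \eqref{linesearch_condition_for_proposed}, written as \eqref{linesearch_condition_for_proposed_2} via Lemma~\ref{lem_descent_condition}, holds for every $\alpha\in(0,\alpha_{\mathrm{init}}]$ with $\alpha\le 2(T-\delta)/(LP^2)$. Termination of the backtracking and the lower bound then follow.

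\textbf{Sufficient condition.} When Step~4.1 is selected, $\alpha_{\mathrm{init}}=t\le 1$ satisfies \eqref{conditional_branch}. By Lemma~\ref{lem:armijo_convex}, every $\alpha\in(0,\alpha_{\mathrm{init}}]$ satisfies
\[
\alpha\nabla g(x_k)^\top d_k + h(x_k+\alpha d_k)-h(x_k)\le -\alpha T\|\eta_k\|^2 .
\]
Since $\alpha\le 1$, Lemma~\ref{lem_g_lipchitz} applies. Combining it with the display and with Lemma~\ref{lem_dk} gives
\[
f(x_k+\alpha d_k)\le f(x_k)-\alpha T\|\eta_k\|^2+\frac{L}{2}\alpha^2P^2\|\eta_k\|^2 .
\]
The right-hand side is at most $f(x_k)-\delta\alpha\|\eta_k\|^2$ whenever $\alpha\le 2(T-\delta)/(LP^2)$. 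This threshold is positive because $T>\delta$. Hence any $\alpha\le\min\{\alpha_{\mathrm{init}},2(T-\delta)/(LP^2)\}$ is accepted by \eqref{linesearch_condition_for_proposed}.

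\textbf{Backtracking.} The trial values $\alpha_{\mathrm{init}}\tau^j$ decrease to zero, so some trial value is accepted and $\alpha_k$ exists. The upper bound $\alpha_k\le\alpha_{\mathrm{init}}\le 1$ holds by construction. For the lower bound, suppose $\alpha_k<\alpha_{\mathrm{init}}$. Then the rejected previous trial $\alpha_k/\tau\le\alpha_{\mathrm{init}}$ must violate the sufficient condition, which forces $\alpha_k/\tau>2(T-\delta)/(LP^2)$. Otherwise $\alpha_k=\alpha_{\mathrm{init}}\ge\tau\alpha_{\mathrm{init}}$. In both cases $\alpha_k\ge\overline{\alpha}$.

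\textbf{Main obstacle.} The delicate point is the use of Lemma~\ref{lem_dk}. Its constant $P$ depends on $\mu$ through $K$, and it requires $\mu_k=\mu$ fixed, which the paper assumes without loss of generality. I would also check the edge case $\eta_k=0$: then $d_k=0$ and the Armijo condition holds trivially, so $\alpha_k=\alpha_{\mathrm{init}}$ and the bounds hold.
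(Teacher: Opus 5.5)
Your proof is correct and follows the paper's argument essentially step for step. Lemma~\ref{lem:armijo_convex} transfers \eqref{conditional_branch} to every $\alpha\le\alpha_{\mathrm{init}}$, Lemma~\ref{lem_dk} bounds the quadratic term by $\alpha(T-\delta)\|\eta_k\|^2$ once $\alpha\le 2(T-\delta)/(LP^2)$, and Lemma~\ref{lem_g_lipchitz} then gives \eqref{linesearch_condition_for_proposed_2}. Your ``rejected previous trial $\alpha_k/\tau$'' argument for $\alpha_k\ge\overline{\alpha}$ just spells out what the paper compresses into one sentence about backtracking.
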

\begin{proof}
When Step~4.1 is selected, the condition 
$\nabla g({x}_k)^\top t {d}_k + h({x}_k + t{d}_k) - h({x}_k) \leq -tT  \|{\eta}_k\|^2$ holds. 
Let us consider any $\alpha$ satisfying $0 < \alpha \leq \min \left\{ t, \frac{2(T-\delta)}{LP^2} \right\}$. 
Since $\alpha \leq t$, Lemma~\ref{lem:armijo_convex} guarantees that
\[
    \nabla g({x}_k)^\top \alpha {d}_k + h({x}_k + \alpha{d}_k) - h({x}_k) \leq -\alpha T  \|{\eta}_k\|^2.
\]
Furthermore, it follows from~\eqref{lem_dk_formula} that
\[
    \frac{\alpha L}{2} \| {d}_k\|^2
    \leq \frac{\alpha L P^2}{2} \| {\eta}_k\|^2 \leq (T-\delta)\| {\eta}_k\|^2.
\]
Therefore, by substituting these inequalities into~\eqref{lem_g_lipchitz_formula}, we obtain
\[
    f({x}_k+\alpha {d}_k) - f({x}_k)
    \leq -\alpha \delta \|{\eta}_k\|^2,
\]
which implies that the line search condition is satisfied. 
Since the backtracking strategy is employed with $\tau\in(0,1)$ and $\alpha_{\mathrm{init}} = t$, we obtain
\[
    \overline \alpha \equiv \tau \min \left\{ \alpha_{\mathrm{init}}, \frac{2(T-\delta)}{LP^2} \right\}
    \leq \alpha_k \leq \alpha_{\mathrm{init}} \ (\leq 1).
\]
\end{proof}

Finally, we show that any accumulation point of $\{ {x}_k \}$ is a stationary point of~\eqref{objective_function}.
\begin{theorem}\label{theorem:proposed_alg_global_covergence}
Suppose Assumption \ref{assumption1} is satisfied. Let the sequences $\{ {x}_k \}$ be generated by Algorithm~\ref{alg:proposed}. If the objective function $f$ is bounded below, then we have
\begin{equation}\label{theorem:proposed_alg_global_covergence_formula}
    \lim_{k \to \infty}\|{\eta}_k\|=0.
\end{equation}
Furthermore, if $\{ {x}_k \}$ is bounded, then any accumulation point of $\{ {x}_k \}$ is a stationary point of~\eqref{objective_function}.
\end{theorem}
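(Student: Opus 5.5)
The plan is to show that every iteration of Algorithm~\ref{alg:proposed} decreases $f$ by at least a fixed multiple of $\|\eta_k\|^2$. Summing these decreases and using that $f$ is bounded below will then give~\eqref{theorem:proposed_alg_global_covergence_formula}. As in the text, I assume $\mu_k=\mu$ for all $k$. The iterations split into two cases according to the branch taken in Step~4.

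\emph{Case 1: Step~4.1.} By the line search condition~\eqref{linesearch_condition_for_proposed_2} and Lemma~\ref{lem_step_existence},
\[
f(x_{k+1})\le f(x_k)-\delta\alpha_k\|\eta_k\|^2\le f(x_k)-\delta\,\tau\min\Bigl\{\bar t,\tfrac{2(T-\delta)}{LP^2}\Bigr\}\|\eta_k\|^2 .
\]
This uses that Step~4.1 is entered only when $\alpha_{\mathrm{init}}=t>\bar t$. The resulting lower bound on $\alpha_k$ is uniform in $k$, and this uniformity is the one point needing care. The constant $P$ from Lemma~\ref{lem_dk} depends only on $\mu$, $L$ and $\bar\nu$, so it is also uniform.

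\emph{Case 2: Step~4.2.} Here $x_{k+1}=x_k^+$, and the proximal gradient decrease property~\eqref{theorem:PGM_decrease_condition_formula} applies because $\mu$ satisfies~\eqref{proposed_decrease_condition}. It gives
\[
f(x_{k+1})\le f(x_k)-\tfrac{1}{2\mu}\|x_k^+-x_k\|^2=f(x_k)-\tfrac{\mu}{2}\|\eta_k\|^2 .
\]
To make this step self-contained, I would verify it directly. Adding~\eqref{proposed_decrease_condition} to the subgradient inequality for $h$ at $x_k^+$ with the subgradient $\eta_k-\nabla g(x_k)$ from Lemma~\ref{lem_mu_optimality} yields $f(x_k^+)\le f(x_k)-\frac{1}{2\mu}\|x_k^+-x_k\|^2$.

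Combining the two cases, let $c=\min\{\delta\tau\min\{\bar t,2(T-\delta)/(LP^2)\},\mu/2\}>0$. Then $f(x_{k+1})\le f(x_k)-c\|\eta_k\|^2$ for every $k$. Summing from $0$ to $N$ gives $c\sum_{k=0}^N\|\eta_k\|^2\le f(x_0)-\inf f<\infty$. Hence $\sum_k\|\eta_k\|^2<\infty$, and so $\|\eta_k\|\to0$.

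For the second claim, let $x^*$ be an accumulation point with $x_{k_j}\to x^*$. The map $x\mapsto\eta_\mu(x)=\frac1\mu\bigl(x-\mathrm{prox}_{\mu h}(x-\mu\nabla g(x))\bigr)$ is continuous, since $\nabla g$ is Lipschitz and $\mathrm{prox}_{\mu h}$ is nonexpansive by Proposition~\ref{prop_prox_nonexpansive}. Therefore $\eta_\mu(x^*)=\lim_j\eta_{k_j}=0$, and~\eqref{stationary_point_condition} gives $x^*\in\operatorname{zer}\partial f$. Boundedness of $\{x_k\}$ is used only to guarantee that accumulation points exist.

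The main obstacle is the uniformity of the decrease constant. Lemma~\ref{lem_dk} needs~\eqref{condition_s_k} and $\mu$ fixed, which holds only after $\mu_k$ stabilizes. Before that point there are finitely many iterations, each of which still decreases $f$, so they do not affect summability.
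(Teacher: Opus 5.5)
Your proof is correct and follows essentially the same route as the paper. Both arguments establish a uniform sufficient decrease $f(x_{k+1})\le f(x_k)-\bar c\|\eta_k\|^2$ in both branches, telescope it against the lower bound on $f$, and then show that accumulation points are stationary. Your use of $\alpha_{\mathrm{init}}>\bar t$ to make the step-size bound of Lemma~\ref{lem_step_existence} uniform in $k$ tightens a point the paper leaves implicit, as does your continuity-of-$\eta_\mu$ argument, which replaces the paper's passage to the limit in the inclusion of Lemma~\ref{lem_mu_optimality}.
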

\begin{proof}
First, we show~\eqref{theorem:proposed_alg_global_covergence_formula}, if Step~4.1 is selected. 
It follows from Lemma~\ref{lem_step_existence} and~\eqref{linesearch_condition_for_proposed_2} that
\begin{equation*}
    f({x}_k + \alpha_k {d}_k) - f({x}_k) 
    \leq - \delta \overline{\alpha} \|{\eta}_k\|^2.
\end{equation*}
On the other hand, if Step~4.2 is selected, from~\eqref{theorem:PGM_decrease_condition_formula}, we obtain 
\[
f({x}_{k+1}) - f ({x}_k) \leq -\frac{\mu}{2}\|{\eta}_k\|^2.
\]
Therefore, there exists a constant $\overline{c}$ such that for all $k$,
\begin{equation}\label{decrease_objective_function}
    f({x}_{k+1}) - f ({x}_k) 
    \leq -\overline{c} \|{\eta}_k\|^2.
\end{equation}
Summing the above inequality from $k=0$ to $k=\tilde{k}$, we have
\begin{equation*}
    f({x}_{\tilde{k}+1}) - f({x}_0)
    \leq f({x}_1) - f({x}_0) + \ldots 
    + f({x}_{\tilde{k}+1}) - f({x}_{\tilde{k}})
    \leq \sum_{k=0}^{\tilde{k}}-\overline{c} \|{\eta}_k\|^2.
\end{equation*}
Since the objective function is bounded below, there exists a constant $\tilde{f}$ such that $\tilde{f} \leq f({x}_k)$. Thus, taking limit $\tilde{k} \to \infty$, we get
\begin{equation*}
    \sum_{k=0}^{\infty} \|{\eta}_k\|^2
    \leq \frac{1}{\overline{c}} (f({x}_0)
    -\tilde{f} ) < \infty.
\end{equation*}
Therefore, we obtain 
\begin{equation*}
    \lim_{k \to \infty} \| {\eta}_k\| =0.
\end{equation*}
Next, we show any accumulation point of $\{{x}_k\}$ is a stationary point of~\eqref{objective_function}. 
Let $\bar{{x}}$ be an accumulation points of $\{ {x}_k \}$.
From~\eqref{eta_k} and~\eqref{theorem:proposed_alg_global_covergence_formula}, we have
\begin{equation*}
     \lim_{k \to \infty} \| {x}_k-{x}_k^+\|
     = \lim_{k \to \infty} \mu\| {\eta}_k\| 
     = 0.
\end{equation*}
Therefore, from~\eqref{lem_mu_optimality_formula} in Lemma~\ref{lem_mu_optimality}, we obtain
\begin{equation*}
    {0} \in \nabla g(\bar{{x}}) + \partial h (\bar{{x}}).
\end{equation*}
This completes the proof of Theorem~\ref{theorem:proposed_alg_global_covergence}. 
\end{proof}

If the objective function is convex, then any stationary point is a global minimizer.
Moreover, if the objective function is strongly convex,  then the global minimizer is unique.
Therefore, we have Corollary~\ref{col:alg_proposed} by Theorem~\ref{theorem:proposed_alg_global_covergence}.
\begin{corollary}\label{col:alg_proposed}
    Suppose Assumption~\ref{assumption1} holds, and let the sequences \( \{ {x}_k \} \) be generated by Algorithm~\ref{alg:proposed}.
    Then, 
    \renewcommand{\theenumi}{\roman{enumi}}
    \begin{enumerate}
        \item if the objective function \( f \) is convex and bounded below, then the relation~\eqref{theorem:proposed_alg_global_covergence_formula} holds. 
        Moreover, if the sequences \( \{ {x}_k \} \) is bounded, then every accumulation point of \( \{ {x}_k \} \) is a global minimizer of problem~\eqref{objective_function},
        \item if, in addition, \( f \) is strongly convex, then the sequence \( \{ {x}_k \} \) converges to the unique global minimizer of problem~\eqref{objective_function}.
    \end{enumerate}
\end{corollary}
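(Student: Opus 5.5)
Part (i) follows from Theorem~\ref{theorem:proposed_alg_global_covergence}, and part (ii) is then obtained from part (i) via strong convexity. Since $f$ is bounded below, Theorem~\ref{theorem:proposed_alg_global_covergence} applies directly and gives $\lim_{k\to\infty}\|\eta_k\|=0$. If $\{x_k\}$ is bounded, the same theorem shows that every accumulation point $\bar x$ satisfies $0\in\nabla g(\bar x)+\partial h(\bar x)$, i.e.\ $\bar x\in\text{zer }\partial f$. For convex $f=g+h$ with $g$ differentiable and $h$ convex, $\partial f=\nabla g+\partial h$, so $0\in\partial f(\bar x)$. The subgradient inequality then gives $f(u)\ge f(\bar x)$ for all $u$, so $\bar x$ is a global minimizer. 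This settles (i).

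For (ii), we first check the hypotheses of (i). A proper, lower semicontinuous, strongly convex $f$ is bounded below and coercive, since it dominates a quadratic of the form $f(\bar u)+\xi^\top(x-\bar u)+\frac{m}{2}\|x-\bar u\|^2$ for some $\bar u\in\textnormal{dom}\,f$ and $\xi\in\partial f(\bar u)$. Hence its level sets are bounded. By \eqref{decrease_objective_function} the values $f(x_k)$ are nonincreasing, so every $x_k$ lies in the level set $\{x: f(x)\le f(x_0)\}$. This set is bounded, and therefore so is $\{x_k\}$.

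Strong convexity also gives a unique global minimizer $x^*$. Every accumulation point of the bounded sequence $\{x_k\}$ is, by (i), a global minimizer, hence equals $x^*$. A bounded sequence in $\mathbb{R}^n$ whose only accumulation point is $x^*$ converges to $x^*$: otherwise some subsequence stays $\varepsilon$-away from $x^*$, and by Bolzano--Weierstrass it would have a further accumulation point different from $x^*$.

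The main obstacle is establishing boundedness of $\{x_k\}$ in (ii), because (i) assumes it whereas (ii) does not. It rests on two facts: the monotone decrease \eqref{decrease_objective_function}, which holds in both branches (Steps~4.1 and 4.2), and coercivity of a strongly convex function. If "$f$ strongly convex" were meant only as Assumption~\ref{assumption_strong_convex} on $g$, the argument is unchanged, since $g$ strongly convex plus $h$ convex makes $f$ strongly convex.
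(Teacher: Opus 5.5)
Your proposal is correct and follows the paper's route: apply Theorem~\ref{theorem:proposed_alg_global_covergence}, use convexity to upgrade stationarity to global optimality, and use strong convexity for uniqueness of the minimizer. You also supply details the paper's one-line justification leaves implicit in (ii), namely boundedness of $\{x_k\}$ (via bounded level sets of the strongly convex $f$ together with the monotone decrease \eqref{decrease_objective_function}) and the fact that a bounded sequence with a single accumulation point converges.
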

\section{Extension to the case where \texorpdfstring{$h$}{h} is weakly convex functions}
\label{sec:weakly_convex}
While the previous section focused on the case where $h$ is convex, many practical applications involve nonsmooth terms that are weakly convex. 
In this section, we extend the proposed method to the weakly convex setting and establish both its global convergence and convergence rate.
Accordingly, we discuss the case where $h : \mathbb{R} \to \mathbb{R} \cup \{+\infty\}$ is a proper lower semicontinuous $\rho$-weakly convex function in \eqref{objective_function}.
For this purpose, we consider the modification of the line search under the weak convexity assumption and the global convergence of the proposed algorithm.
First, we introduce the definition of a weakly convex function.
\begin{definition}[Weakly convex function]
A function $h:\mathbb{R}^n\to{\mathbb{R}}$ is said to be \emph{$\rho$-weakly convex} for $\rho \ge 0$ if
\[
h(x)+\frac{\rho}{2}\|x\|^2
\]
is a convex function.
\end{definition}
In particular, when $\rho=0$, $h$ reduces to a convex function. 
The class of weakly convex functions is particularly important in machine learning and signal processing. 
Typical examples of weakly convex functions include nonconvex sparsity-inducing regularizers such as the smoothly clipped absolute deviation (SCAD) penalty~\cite{Fan2001} and the minimax concave penalty (MCP)~\cite{Zhang2010}. 
By relaxing the convexity assumption, the proposed algorithm can be applied to a wider range of practical nonconvex optimization problems.
Recall that the proximal mapping in~\eqref{definition:proximal_operator_formula} is defined for a convex function~$h$ with any $\mu > 0$. 
Under $\rho$-weak convexity of $h$, the subproblem of $\operatorname{prox}_{\mu h}(v)$ remains strongly convex for $\mu \in (0,1/\rho)$, ensuring that the proximal mapping is well-defined and single-valued~\cite{Bayram2016,Bohm2021}.
Consider the proximal gradient method that generates the sequence $\{x_k\}$ via~\eqref{proximal_operator_4}.
Provided $\mu_k$ satisfies~\eqref{PGM:decrease_condition} and $\mu_k \in (0,1/\rho)$, then
\begin{equation*}
f(x_{k+1}) \leq f(x_k) - \left(\frac{1}{\mu_k}-\rho\right) \|x_{k+1} - x_k\|^2.
\end{equation*}
This inequality is obtained by adapting the constant step size analysis in the proof of Theorem 5.1 in~\cite{Bohm2021} to the variable step size setting, and ensures that $\{f(x_k)\}$ is nonincreasing, which is fundamental for convergence analysis in the nonconvex setting.
It should be noted that, unlike the convex case, the subdifferential of a weakly convex function is no longer the convex subdifferential.
To discuss optimality conditions in this setting, we introduce the Fréchet subdifferential.
\begin{definition}[Fréchet subdifferential]
Let $h:\mathbb{R}^n \to \mathbb{R}$ be a function and let $x \in \mathbb{R}^n$ be a point such that $h(x)$ is finite.
The \emph{Fréchet subdifferential} of $h$ at $x$, denoted by $\hat{\partial} h(x)$, is defined as the set of all vectors $v \in \mathbb{R}^n$ satisfying
\[
h(y) \ge h(x) + \langle v,\, y - x \rangle + o(\|y - x\|)
\quad \text{as } y \to x.
\]
\end{definition}
Throughout this section, $\hat\partial h(x)$ denotes the Fréchet subdifferential.
In the weakly convex setting, we adopt the notion of stationarity defined via the Fréchet subdifferential instead of \eqref{definition:set_stationary_points}.
A point $x \in \mathbb{R}^n$ is said to be a Fréchet stationary point of problem~\eqref{objective_function} if it satisfies $0 \in \nabla g(x) + \hat{\partial} h(x)$, 
which follows from the generalized Fermat's rule and the subdifferential sum rule~\cite{Rockafellar1998}.
Thus, the set of Fréchet stationary points,
$\mathrm{zer}\,\hat{\partial} f$, is defined as follows:
\begin{equation}\label{eq:frechet_stationary_set} 
\mathrm{zer}\,\hat{\partial} f
= \left\{ x \in \mathbb{R}^n \mid
0 \in \nabla g(x) + \hat{\partial} h(x) \right\}.
\end{equation}
It is known that this set $\mathrm{zer}\,\hat{\partial} f$ coincides with the fixed-point set of the iteration defined via the proximal mapping.
Under the weakly convex setting, this relationship is given by the following proposition~\cite{Khanh2025}.
\begin{proposition}\label{prop:fixed_point_characterization}
Suppose that Assumption~\ref{assumption1} holds and that $h$ is a $\rho$-weakly convex function.
Then, for any constant $\mu \in (0, 1/\rho)$, the following equivalence holds:
\begin{equation*}\label{eq:fixed_point_equivalence}
x = \operatorname{prox}_{\mu h}
\bigl(x - \mu\nabla g(x)\bigr)
\iff
x \in \mathrm{zer}\,\hat{\partial} f.
\end{equation*}
\end{proposition}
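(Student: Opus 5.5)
The plan is to reduce both directions to the first-order optimality condition of the strongly convex proximal subproblem. Fix $\mu\in(0,1/\rho)$ and set $v=x-\mu\nabla g(x)$. Consider
\[
\phi(u)=h(u)+\frac{1}{2\mu}\|u-v\|^2 = \Bigl(h(u)+\frac{\rho}{2}\|u\|^2\Bigr)+\Bigl(\frac{1}{2\mu}\|u-v\|^2-\frac{\rho}{2}\|u\|^2\Bigr).
\]
This is the sum of a proper lsc convex function and a quadratic with Hessian $(1/\mu-\rho)I\succ 0$, so $\phi$ is strongly convex. Hence $\operatorname{prox}_{\mu h}(v)$ is its unique minimizer, and $u^\ast$ is the minimizer if and only if $0\in\partial\phi(u^\ast)$, with $\partial$ the convex subdifferential.

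The key step is to relate subdifferentials. First I would show that for a $\rho$-weakly convex $h$ one has
\[
\hat\partial h(u)=\partial\Bigl(h+\tfrac{\rho}{2}\|\cdot\|^2\Bigr)(u)-\rho u .
\]
This follows from two facts. The Fréchet subdifferential obeys an exact sum rule when one summand is $C^1$. For a convex function, the Fréchet subdifferential coincides with the convex one; a local $o(\|y-u\|)$ inequality upgrades to a global one by convexity, which is the one genuinely nontrivial point. I would prove it by restricting to segments $u+s(y-u)$ and using monotonicity of difference quotients. With the same sum rule applied to the smooth quadratic, we get $\partial\phi(u)=\hat\partial h(u)+\frac1\mu(u-v)$. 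Therefore
\[
u^\ast=\operatorname{prox}_{\mu h}(v)\iff 0\in\hat\partial h(u^\ast)+\tfrac{1}{\mu}(u^\ast-v).
\]

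Now substitute $u^\ast=x$ and $v=x-\mu\nabla g(x)$. The condition $x=\operatorname{prox}_{\mu h}(x-\mu\nabla g(x))$ becomes $0\in\hat\partial h(x)+\nabla g(x)$, which is exactly $x\in\mathrm{zer}\,\hat\partial f$ by \eqref{eq:frechet_stationary_set}. Both implications come at once, since minimizer uniqueness makes the optimality condition necessary and sufficient.

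The main obstacle is justifying $\hat\partial=\partial$ for the convexified function together with the smooth sum rule. I expect the sum rule itself to be immediate from the definition, since adding a $C^1$ function just shifts by its gradient up to $o(\|y-u\|)$. Assumption~\ref{assumption1} is only needed to make $\nabla g(x)$ well-defined; no Lipschitz property is used.
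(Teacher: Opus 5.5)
Your proof is correct. The paper gives no proof of this proposition and simply cites \cite{Khanh2025}, so your argument is a self-contained derivation rather than a variant of an in-paper proof.

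Your route has three ingredients: the convexification $\psi=h+\frac{\rho}{2}\|\cdot\|^2$, the exact Fréchet sum rule with a differentiable summand, and the coincidence $\hat\partial\psi=\partial\psi$ for convex $\psi$. Your segment argument with monotone difference quotients establishes the last of these correctly. Together they give $\hat\partial h(u)=\partial\psi(u)-\rho u$, and hence $\operatorname{prox}_{\mu h}(v)=u^\ast \iff \frac{1}{\mu}(v-u^\ast)\in\hat\partial h(u^\ast)$, from which the equivalence is immediate.

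One small point of phrasing. The convex Fermat rule is necessary and sufficient because $\phi$ is convex. Uniqueness of the minimizer is what lets you identify a point satisfying $0\in\partial\phi(x)$ with $\operatorname{prox}_{\mu h}(v)$. Both are available here, so nothing is missing.

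Compared with the citation, your approach makes three things explicit.
\begin{itemize}
\item The hypothesis $\mu<1/\rho$ is used only to make the prox objective strongly convex.
\item Only differentiability of $g$ at $x$ is needed, not the Lipschitz property in Assumption~\ref{assumption1}.
\item As a byproduct you obtain the weakly convex analogue of Lemma~\ref{lem_mu_optimality}, namely $\eta_\mu(x)-\nabla g(x)\in\hat\partial h(x^+)$. The paper uses this without proof in Theorems~\ref{thm:subgradient_rate} and~\ref{theorem:convergence_rate}.
\end{itemize}
Your identity also yields the hypomonotonicity inequality $(v_x-v_y)^\top(x^+-y^+)\ge-\rho\|x^+-y^+\|^2$ invoked in Theorem~\ref{theorem:convergence_rate}, directly from monotonicity of $\partial\psi$. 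The citation is shorter but leaves these points, and the matching of subdifferential conventions, implicit.
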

Based on the above discussion, we present Algorithm~\ref{alg:proposed_weakly} for the weakly convex case.
\begin{algorithm}[htbp]
    \caption{Proximal nonlinear conjugate gradient method when $h$ is weakly convex} \label{alg:proposed_weakly}
    \begin{algorithmic}
        \State \textbf{Step 0:} 
        Given an initial point ${x}_0 \in \textnormal{dom}\:f$, and parameters $\mu_{-1}\in(0,1/\rho)$, $\kappa\in(0,1)$, $\bar{\nu}>0$, $\delta\in(0,1)$, $T>\delta$, $\theta\in(0,1)$, $\bar t\in(0,1)$, $\tau\in(0,1)$. Set $k=0$.
        \State \textbf{Step 1:} 
        Compute ${x}_k^+$ by
        \begin{equation*}
            {x}_k^+={\rm prox}_{\mu_k h}({x}_k-\mu_k\nabla g({x}_k)),
        \end{equation*}
        where $\mu_k$ is the largest value among $\{\mu_{k-1},\mu_{k-1}\kappa,\mu_{k-1}\kappa^2,\cdots\}$ that satisfies satisfies~\eqref{proposed_decrease_condition}.
        \State \textbf{Step 2:} If the stopping condition is satisfied, terminate the algorithm.
        \State \textbf{Step 3:} Compute the search direction ${d}_k$ by (\ref{d_k}).
        \State \textbf{Step 4:}
        Find the largest trial step size $t \in \{1, \theta, \theta^2, \dots \}$ that satisfies condition~\eqref{conditional_branch}.
        If such a $t$ exists and satisfies $t > \bar{t}$, set $\alpha_{\mathrm{init}} = t$ and go to Step~4.1; otherwise, go to Step~4.2.
        
        \State \textbf{\;Step 4.1:}
        Let $\alpha_k$ be the largest value among $\{\alpha_{\mathrm{init}}, \tau \alpha_{\mathrm{init}}, \tau^2 \alpha_{\mathrm{init}}, \dots \}$ that satisfies both condition~\eqref{conditional_branch} and the Armijo condition~\eqref{linesearch_condition_for_proposed}. 
        If such an $\alpha_k$ exists, update the iterate by:
        \[
            x_{k+1} = x_k + \alpha_k d_k,
        \]
        and go to Step~5; otherwise, go to Step~4.2.
        
        \State \textbf{\;Step 4.2:}
        Set $d_k = -{\eta}_k$, and update the iterate by:
        \[
            x_{k+1} = x_k^+,
        \]
        and go to Step~5.
        \State \textbf{Step 5:} Set $k \leftarrow k+1$ and return to Step 1.
    \end{algorithmic}
\end{algorithm}
Note that the initial step size is chosen such that $\mu_{-1} \in (0, 1/\rho)$. 
Since $\{\mu_k\}$ is non-increasing, $\mu_k \in (0, 1/\rho)$ holds for all $k \ge 0$. 
This ensures that the proximal mapping is well-defined and the subsequent analysis remains valid throughout the iterations.
Before establishing the global convergence for Algorithm~\ref{alg:proposed_weakly}, we discuss the impact of weak convexity on the fundamental lemmas, specifically Lemmas~\ref{lem_yk} and \ref{lem:armijo_convex}.
For a $\rho$-weakly convex function $h$ and $\mu \in (0, 1/\rho)$, the property in Proposition~\ref{prop_prox_nonexpansive} is reformulated as the following inequality following \cite{Bayram2016}:
\begin{equation*}
    \|\operatorname{prox}_{\mu h}(u) - \operatorname{prox}_{\mu h}(v)\| \leq \frac{1}{1-\mu\rho}\|u - v\| , \quad \forall u, v \in \mathbb{R}^n.
\end{equation*}
Consequently, the evaluation in the proof of Proposition~\ref{lem_yk} is slightly modified. Using this Lipschitz continuity, the bound becomes:
\begin{align*}
    \|{y}_{k-1}\| 
    &\leq \frac{1}{\mu} \|{x}_k - {x}_{k-1}\| + \frac{1}{\mu} \|\operatorname{prox}_{\mu h}({x}_k - \mu\nabla g({x}_k)) - \operatorname{prox}_{\mu h}({x}_{k-1} - \mu\nabla g({x}_{k-1}))\| \\
    &\leq \frac{1}{\mu} \|{x}_k - {x}_{k-1}\| + \frac{1}{\mu(1-\mu\rho)} \|({x}_k - \mu\nabla g({x}_k)) - ({x}_{k-1} - \mu\nabla g({x}_{k-1}))\| \\
    &\leq \frac{1}{\mu} \|{x}_k - {x}_{k-1}\| + \frac{1}{\mu(1-\mu\rho)} (\|{x}_k - {x}_{k-1}\| + \mu \|\nabla g({x}_k) - \nabla g({x}_{k-1})\|) \\
    &\leq \left( \frac{1}{\mu} + \frac{1+\mu L}{\mu(1-\mu\rho)} \right) \|{x}_k - {x}_{k-1}\|.
\end{align*}
Thus, the inequality $\|{y}_{k-1}\| \leq K \|{s}_{k-1}\|$ still holds with a new positive constant $K = \frac{1}{\mu} + \frac{1+\mu L}{\mu(1-\mu\rho)}$. 
In addition, since Lemma~\ref{lem:armijo_convex} does not necessarily hold for weakly convex $h$, the algorithm is modified to ensure that conditions~\eqref{conditional_branch} and \eqref{linesearch_condition_for_proposed} are satisfied when determining the step size.
Therefore, the key ingredients required for the convergence analysis are preserved, and we obtain Theorem~\ref{corollary:weakly_convex_convergence}.
\begin{theorem}\label{corollary:weakly_convex_convergence}
Suppose Assumption~\ref{assumption1} is satisfied, and $h$ is a $\rho$-weakly convex function. 
Let the sequence $\{ {x}_k \}$ be generated by Algorithm~\ref{alg:proposed_weakly}. 
If the objective function $f$ is bounded below, then we have
\[
\lim_{k \to \infty}\|{\eta}_k\|=0.
\]
Furthermore, if $\{ {x}_k \}$ is bounded, then any accumulation point of $\{ {x}_k \}$ is a stationary point of problem~\ref{objective_function} in the sense that~\eqref{eq:frechet_stationary_set}.
\end{theorem}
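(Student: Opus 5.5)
The plan follows the proof of Theorem~\ref{theorem:proposed_alg_global_covergence}: first obtain a uniform sufficient decrease of the form $f(x_{k+1})-f(x_k)\le -\bar c\|\eta_k\|^2$ for every $k$, then telescope, and finally pass to the limit in the optimality relation. As in the convex case, $\{\mu_k\}$ is nonincreasing and bounded below. By~\eqref{lipschitz_continuous_2}, condition~\eqref{proposed_decrease_condition} holds whenever $\mu_k\le 1/L$, so $\mu_k\ge \min\{\mu_{-1},\kappa/L\}$. Hence $\mu_k$ is eventually constant, and I may assume $\mu_k=\mu\in(0,1/\rho)$ for all $k$.

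\textbf{Step 1: the direction.} Lemma~\ref{lem_descent_condition} is purely algebraic, so $\eta_k^\top d_k=-\|\eta_k\|^2$ still holds. Condition~\eqref{condition_s_k} is enforced by the choice of $\nu_k$, and the modified Lipschitz bound $\|y_{k-1}\|\le K\|s_{k-1}\|$, with $K=\frac1\mu+\frac{1+\mu L}{\mu(1-\mu\rho)}$, was derived just before the statement. The proof of Lemma~\ref{lem_dk} then goes through verbatim and gives $\|d_k\|\le P\|\eta_k\|$ with $P=1+2K/\bar\nu$.

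\textbf{Step 2: the step size.} In Step~4.1, the Armijo condition~\eqref{linesearch_condition_for_proposed} combined with Lemma~\ref{lem_descent_condition} yields $f(x_{k+1})\le f(x_k)-\delta\alpha_k\|\eta_k\|^2$. I need $\alpha_k$ bounded below uniformly. Here Lemma~\ref{lem:armijo_convex} is not available, so I will not prove that backtracking succeeds. Instead I rely on the algorithm's design: if no $\alpha$ in $\{\alpha_{\rm init}\tau^j\}$ satisfies both conditions, the method falls back to Step~4.2. To keep a uniform bound, I would restrict the backtracking to candidates $\alpha\ge \tau\min\{\bar t,\,2(T-\delta)/(LP^2)\}$. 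Any accepted $\alpha_k$ is then at least this $\bar\alpha>0$, since $\alpha_{\rm init}>\bar t$. If some candidate satisfies~\eqref{conditional_branch}, Lemma~\ref{lem_g_lipchitz} and Step~1 show that Armijo also holds once $\alpha\le 2(T-\delta)/(LP^2)$.

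This is the step I expect to be the main obstacle. As literally written, Step~4.1 could accept an arbitrarily small $\alpha_k$, which breaks the uniform decrease constant. My approach is to read the ``otherwise go to Step~4.2'' clause as applying once the candidates fall below $\bar\alpha$, that is, to use a finite backtracking budget. I would state this interpretation explicitly.

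\textbf{Step 3: decrease, telescoping, and the limit.} In Step~4.2, the weakly convex proximal gradient decrease quoted before the theorem gives $f(x_{k+1})\le f(x_k)-(\frac1\mu-\rho)\mu^2\|\eta_k\|^2$, and the coefficient is positive since $\mu<1/\rho$. Taking $\bar c=\min\{\delta\bar\alpha,\mu(1-\mu\rho)\}$ gives~\eqref{decrease_objective_function}. Telescoping with $f$ bounded below gives $\sum_k\|\eta_k\|^2<\infty$, so $\|\eta_k\|\to0$.

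For stationarity, let $x_{k_j}\to\bar x$. Then $x_{k_j}^+=x_{k_j}-\mu\eta_{k_j}\to\bar x$. The prox map of the weakly convex $h$ is Lipschitz with constant $1/(1-\mu\rho)$, hence continuous, and $\nabla g$ is continuous. The map $x\mapsto \operatorname{prox}_{\mu h}(x-\mu\nabla g(x))$ is therefore continuous, and passing to the limit gives $\bar x=\operatorname{prox}_{\mu h}(\bar x-\mu\nabla g(\bar x))$. Proposition~\ref{prop:fixed_point_characterization} then gives $\bar x\in\mathrm{zer}\,\hat\partial f$. Arguing through the fixed-point map avoids needing closedness of the graph of the Fréchet subdifferential.
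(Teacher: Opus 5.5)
Your argument has the same skeleton as the paper's proof, which for this theorem is only the paragraph preceding it. That paragraph updates $K$ via the $\frac{1}{1-\mu\rho}$-Lipschitz continuity of $\operatorname{prox}_{\mu h}$, notes that Lemma~\ref{lem:armijo_convex} fails, requires both \eqref{conditional_branch} and \eqref{linesearch_condition_for_proposed} in Step~4.1, and asserts that the proof of Theorem~\ref{theorem:proposed_alg_global_covergence} carries over.

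Your worry in Step~2 is justified, and the paper does not address it. The bound of Lemma~\ref{lem_step_existence} rests on Lemma~\ref{lem:armijo_convex}, which guarantees that the first candidate $\alpha_{\mathrm{init}}\tau^j$ not exceeding $2(T-\delta)/(LP^2)$ still satisfies \eqref{conditional_branch}. For $\rho$-weakly convex $h$, convexity of $t\mapsto h(x_k+td_k)+\frac{\rho}{2}t^2\|d_k\|^2$ only gives $q(t')\le q(t)+\frac{\rho}{2}(t-t')\|d_k\|^2$ for $q(t)=(h(x_k+td_k)-h(x_k))/t$ and $t'\le t$. So at smaller steps \eqref{conditional_branch} is guaranteed only with $T$ replaced by $T-\frac{\rho}{2}\alpha_{\mathrm{init}}P^2$, which is not what the algorithm tests. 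The literal backtracking may therefore pass that candidate and accept an arbitrarily small $\alpha_k$. Demanding both conditions makes every accepted step a descent step, but gives no uniform $\bar c$ in \eqref{decrease_objective_function}.

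Hence the paper's remark does not prove the statement for Algorithm~\ref{alg:proposed_weakly} read literally either. Your explicit floor on the backtracking is the missing ingredient, and you were right to state it as an interpretation. The floor need not be $\tau\min\{\bar t,2(T-\delta)/(LP^2)\}$, which involves the typically unknown $L$ and $P$. Any user-chosen $\underline{\alpha}\in(0,\bar t\,]$ works, because Step~4.2 supplies decrease by itself and the floor's only role is to keep $\delta\alpha_k$ bounded away from zero.

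Two smaller points. First, the Step~4.2 inequality you take from the paper is off by a factor of two. Condition \eqref{proposed_decrease_condition} together with the $(\frac1\mu-\rho)$-strong convexity of the prox subproblem gives $f(x_{k+1})\le f(x_k)-\frac12(\frac1\mu-\rho)\|x_{k+1}-x_k\|^2$. Equality holds for $g(x)=\frac{1}{2\mu}\|x\|^2$, $h(x)=-\frac{\rho}{2}\|x\|^2$, and the bound is consistent with \eqref{theorem:PGM_decrease_condition_formula} at $\rho=0$. So use $\bar c=\min\{\delta\underline{\alpha},\frac{\mu}{2}(1-\mu\rho)\}$; this changes nothing qualitatively.

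Second, for stationarity you depart mildly from the template of Theorem~\ref{theorem:proposed_alg_global_covergence}. That template passes to the limit in the inclusion of Lemma~\ref{lem_mu_optimality}, which in the present setting would need closedness of the graph of $\hat\partial h$ (true for weakly convex $h$, but never stated). You instead pass to the limit in the fixed-point equation via continuity of $\operatorname{prox}_{\mu h}$ and then invoke Proposition~\ref{prop:fixed_point_characterization}. That uses only facts the paper states and is the cleaner choice.
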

Next, we now establish the convergence rate of Algorithm~\ref{alg:proposed_weakly} when $g$ is nonconvex.
The following theorem shows that both the \textit{forward-backward residual} $\|\eta_k\|$ and the distance to the subdifferential $\operatorname{dist}(0,\partial f(x_k^+))$ achieve an $\mathcal{O}(1/\sqrt{k})$ convergence rate.
\begin{theorem}\label{thm:subgradient_rate}
    Suppose Assumption~\ref{assumption1} is satisfied and $h$ is a $\rho$-weakly convex function. 
    Let the sequence $\{x_k\}$ be generated by Algorithm~\ref{alg:proposed_weakly}. 
    If the objective function $f$ is bounded below by $f^*$, then for any $k \ge 0$, the following assertions hold:
    \begin{enumerate}
        \item[(i)] The stationarity measure $\eta_k$ satisfies
        \begin{equation}\label{eq:eta_bound}
            \min_{0 \le i \le k} \|\eta_i\| \le \frac{1}{\sqrt{\bar{c}(k+1)}} \sqrt{f(x_0) - f^*}.
        \end{equation}
        \item[(ii)] The distance from the origin to the subdifferential $\partial f(x_k^+)$ satisfies
        \begin{equation}\label{eq:subgradient_bound}
            \min_{0 \le i \le k} \operatorname{dist}(0,\partial f(x_i^+)) \le \frac{1 + L_g \mu}{\sqrt{\bar{c}(k+1)}} \sqrt{f(x_0) - f^*},
        \end{equation}
    \end{enumerate}
    where $\bar{c} > 0$ is a positive constant introduced in the proof of Theorem~\ref{theorem:proposed_alg_global_covergence}.
\end{theorem}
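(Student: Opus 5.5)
We start from the per-iteration sufficient decrease inequality~\eqref{decrease_objective_function}, $f(x_{i+1})-f(x_i)\le -\bar c\|\eta_i\|^2$. It was derived in the proof of Theorem~\ref{theorem:proposed_alg_global_covergence} and carries over to Algorithm~\ref{alg:proposed_weakly}, as asserted for Theorem~\ref{corollary:weakly_convex_convergence}. In Step~4.1 it holds with the lower bound $\bar\alpha$ on $\alpha_i$, since that step enforces both \eqref{conditional_branch} and the Armijo condition. In Step~4.2 it holds with the proximal-gradient decrease constant $(1/\mu-\rho)\mu^2$. We therefore take $\bar c$ to be the minimum of $\delta\bar\alpha$ and this constant, where $\mu$ is the eventually constant prox parameter.

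For part (i), we sum the decrease inequality from $i=0$ to $k$ and telescope. Using $f(x_{k+1})\ge f^*$ gives
\[
\bar c\sum_{i=0}^{k}\|\eta_i\|^2\le f(x_0)-f^*.
\]
We then bound the sum below by $(k+1)\min_{0\le i\le k}\|\eta_i\|^2$. Taking square roots yields \eqref{eq:eta_bound}.

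For part (ii), we produce an explicit element of $\partial f(x_i^+)$ whose norm is controlled by $\|\eta_i\|$. Lemma~\ref{lem_mu_optimality}, or its weakly convex analogue from the optimality condition of the prox subproblem with the Fréchet subdifferential, gives $\eta_i-\nabla g(x_i)\in\partial h(x_i^+)$. Hence
\[
\eta_i-\nabla g(x_i)+\nabla g(x_i^+)\in\nabla g(x_i^+)+\partial h(x_i^+)=\partial f(x_i^+).
\]
By Assumption~\ref{assumption1} and \eqref{eta_k}, $\|\nabla g(x_i^+)-\nabla g(x_i)\|\le L\|x_i^+-x_i\|=L\mu\|\eta_i\|$. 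Therefore $\operatorname{dist}(0,\partial f(x_i^+))\le(1+L\mu)\|\eta_i\|$, and taking the minimum over $i$ and applying (i) gives \eqref{eq:subgradient_bound}, with $L_g=L$.

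The main obstacle is bookkeeping, not estimation: making sure $\bar c$ is a single uniform constant valid from $k=0$. The paper assumes without loss of generality that $\mu_k=\mu$ from the start, and we adopt the same convention. Otherwise we would use $\mu_k\ge\mu_{\min}>0$, which follows since backtracking stops once $\mu_k\le 1/L$; the bound in (ii) then holds with $\mu$ replaced by $\mu_{-1}$, i.e.\ $1+L\mu_k\le 1+L\mu_{-1}$. In the weakly convex case we must also confirm that the prox optimality condition still places $\eta_i-\nabla g(x_i)$ in the (Fréchet) subdifferential of $h$ at $x_i^+$. This follows from Fermat's rule applied to the strongly convex prox subproblem, which is well defined because $\mu<1/\rho$.
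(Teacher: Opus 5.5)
Your proposal is correct and is essentially the paper's own proof: (i) follows by telescoping the uniform decrease inequality \eqref{decrease_objective_function}, and (ii) follows by bounding the explicit element $\nabla g(x_i^+)-\nabla g(x_i)+\eta_i\in\partial f(x_i^+)$ by $(1+L\mu)\|\eta_i\|$. Your extra bookkeeping for $\bar c$ inherits two soft spots from the paper: since Lemma~\ref{lem:armijo_convex} fails for weakly convex $h$, enforcing \eqref{conditional_branch} and the Armijo condition only at the accepted step does not give $\alpha_i\ge\bar\alpha$ in Step~4.1 of Algorithm~\ref{alg:proposed_weakly} (you need the backtracking to exit to Step~4.2 as soon as a trial step violates \eqref{conditional_branch}, so that the search ends by the first trial step below $2(T-\delta)/(LP^2)$), and the Step~4.2 decrease obtainable from \eqref{proposed_decrease_condition} and the $(1/\mu-\rho)$-strong convexity of the prox subproblem is $\tfrac12(1/\mu-\rho)\mu^2\|\eta_i\|^2$ rather than $(1/\mu-\rho)\mu^2\|\eta_i\|^2$, which only rescales $\bar c$.
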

\begin{proof}
    From the descent condition~\eqref{decrease_objective_function}, we have
    \begin{equation*}
        \|\eta_i\|^2 \le \frac{1}{\bar{c}} \left( f(x_i) - f(x_{i+1}) \right).
    \end{equation*}
    Summing this inequality from $i = 0$ to $k$ yields
    \begin{equation*}
        \sum_{i=0}^{k} \|\eta_i\|^2 \le \frac{1}{\bar{c}} \left( f(x_0) - f(x_{k+1}) \right) \le \frac{1}{\bar{c}} \left( f(x_0) - f^* \right),
    \end{equation*}
    which implies that
    \begin{equation*}
        \min_{0 \le i \le k} \|\eta_i\|^2 \le \frac{1}{\bar{c}(k+1)} \left( f(x_0) - f^* \right).
    \end{equation*}
    Taking the square root on both sides proves~\eqref{eq:eta_bound}.
    Next, by the optimality condition for the proximal operator $\mathrm{prox}_{\mu h}$, we have
    \begin{equation*}
        \frac{1}{\mu}(x_i - x_i^+) - \nabla g(x_i) \in \partial h(x_i^+).
    \end{equation*}
    Adding $\nabla g(x_i^+)$ to both sides, we define
    \begin{equation*}
        \xi_i := \nabla g(x_i^+) - \nabla g(x_i) + \frac{1}{\mu}(x_i - x_i^+) \in \nabla g(x_i^+) + \partial h(x_i^+) = \partial f(x_i^+).
    \end{equation*}
    It follows that
    \begin{align*}
        \|\xi_i\| &\le \|\nabla g(x_i^+) - \nabla g(x_i)\| + \left\| \frac{1}{\mu}(x_i - x_i^+) \right\| \\
        &= (L_g \mu + 1) \|\eta_i\|.
    \end{align*}
    Squaring both sides and taking the minimum over $0 \le i \le k$, we obtain
    \begin{equation*}
        \min_{0 \le i \le k} \|\xi_i\|^2 \le (1 + L_g \mu)^2 \min_{0 \le i \le k} \|\eta_i\|^2.
    \end{equation*}
    Thus, $\operatorname{dist}(0, \partial f(x_i^+)) \le (1 + L_g \mu) \|\eta_i\|$.
    Taking the minimum over $0 \le i \le k$ and applying~\eqref{eq:eta_bound}, we arrive at
    \begin{equation*}
        \min_{0 \le i \le k} \operatorname{dist}(0,\partial f(x_i^+)) \le (1 + L_g \mu) \min_{0 \le i \le k} \|\eta_i\| \le \frac{1 + L_g \mu}{\sqrt{\bar{c}(k+1)}} \sqrt{f(x_0) - f^*},
    \end{equation*}
    which completes the proof.
\end{proof}
\begin{remark}\label{rmk:subgradient_rate}
Several comments regarding the derived convergence rates in Theorem~\ref{thm:subgradient_rate} are in order:
\begin{enumerate}
    \item[(i)] The $\mathcal{O}(1/\sqrt{k})$ rate for the \textit{forward-backward residual} $\|\eta_k\|$ in~(i) matches the rate for proximal gradient methods applied to the sum of a nonconvex smooth function and a weakly convex function~\cite{Hurault2023}. Furthermore, this rate is consistent with the standard rate established for the sum of a nonconvex function and a convex function in Theorem~10.15 of \cite{Beck2017}.
    \item[(ii)] The $\mathcal{O}(1/\sqrt{k})$ rate for the distance to the subdifferential $\operatorname{dist}(0,\partial f(x_k^+))$ in~(ii) aligns with the rate established by \cite{Bohm2021} for the proximal gradient method applied to the sum of a nonconvex smooth function and a weakly convex function. Furthermore, this rate matches the convergence rate of the steepest descent method for general smooth nonconvex optimization~\cite{bookNesterov2004}.
\end{enumerate}
\end{remark}
Next, we analyze the convergence rate of the Algorithm~\ref{alg:proposed_weakly} under the assumption that \( g \) is strongly convex. 
We establish the following convergence rate result.
\begin{theorem}\label{theorem:convergence_rate}
    Suppose Assumptions~\ref{assumption1} and~\ref{assumption_strong_convex} are satisfied and $m > \rho$ holds.
    Let the sequences \( \{ {x}_k \} \) and \( \{ {x}_k^+ \} \) be generated by Algorithm~\ref{alg:proposed_weakly}.
    Then the sequence converges to the optimal solution ${x}^*$, and the following inequality holds:
    \begin{equation}\label{eq:convergence}
        \min_{0\le i\le k} \|{x}_i-{x}^*\| 
        \leq \sqrt{\frac{f({x}_0)-f({x}^*)}{C(k+1)}},
    \end{equation}
    where $C$ is a constant.
\end{theorem}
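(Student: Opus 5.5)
Our approach is to combine the sufficient decrease inequality~\eqref{decrease_objective_function}, which is available for Algorithm~\ref{alg:proposed_weakly} (see the proof of Theorem~\ref{corollary:weakly_convex_convergence}), with a quantitative error bound of the form $\|x-x^*\|\le c_{\mathrm{eb}}\|\eta_\mu(x)\|$, obtained from strong convexity. The proof proceeds in three steps.

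\textbf{Step 1 (existence and uniqueness of $x^*$).} Since $g$ is $m$-strongly convex and $h$ is $\rho$-weakly convex with $m>\rho$, we write $f=(g-\frac{\rho}{2}\|\cdot\|^2)+(h+\frac{\rho}{2}\|\cdot\|^2)$. The first term is $(m-\rho)$-strongly convex and the second is convex, so $f$ is $(m-\rho)$-strongly convex, proper and lower semicontinuous. Hence $f$ is bounded below, has a unique minimizer $x^*$, and every Fréchet stationary point coincides with $x^*$. Consequently Theorem~\ref{corollary:weakly_convex_convergence} applies and gives $\|\eta_k\|\to0$. Moreover $f(x_k)\le f(x_0)$ and the level sets of $f$ are bounded, so $\{x_k\}$ is bounded and all of its accumulation points equal $x^*$. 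This yields convergence of the whole sequence, once we know $x^*$ is the only accumulation point.

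\textbf{Step 2 (error bound, the main obstacle).} We aim to prove $\|x-x^*\|\le c_{\mathrm{eb}}\|\eta_\mu(x)\|$ for every $x$ and for the eventually constant $\mu\in(0,1/\rho)$.

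By Lemma~\ref{lem_mu_optimality} (valid with the Fréchet/limiting subdifferential for weakly convex $h$, as used in Theorem~\ref{thm:subgradient_rate}), the vector $\xi:=\nabla g(x^+)-\nabla g(x)+\eta_\mu(x)$ belongs to $\partial f(x^+)$. As in Theorem~\ref{thm:subgradient_rate}, $\|\xi\|\le(1+L\mu)\|\eta_\mu(x)\|$.

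Strong convexity of $f$ gives monotonicity of $\partial f$, with $0\in\partial f(x^*)$. Therefore $(m-\rho)\|x^+-x^*\|^2\le \xi^\top(x^+-x^*)$, which yields $\|x^+-x^*\|\le \frac{1+L\mu}{m-\rho}\|\eta_\mu(x)\|$.

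Finally, the triangle inequality with $\|x-x^+\|=\mu\|\eta_\mu(x)\|$ gives the bound with $c_{\mathrm{eb}}=\mu+\frac{1+L\mu}{m-\rho}$.

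The delicate point is justifying the subgradient inclusion and the strong monotonicity for the weakly convex $h$. We would handle this by working with the convex function $h+\frac{\rho}{2}\|\cdot\|^2$, whose subdifferential equals $\hat\partial h+\rho\,\mathrm{id}$.

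\textbf{Step 3 (rate).} From~\eqref{decrease_objective_function} and $f(x_{k+1})\ge f(x^*)$, summing gives
\[
\sum_{i=0}^k\|\eta_i\|^2\le \frac{f(x_0)-f(x^*)}{\bar c}.
\]
Applying Step~2 with $x=x_i$ gives $\|x_i-x^*\|^2\le c_{\mathrm{eb}}^2\|\eta_i\|^2$. Therefore
\[
(k+1)\min_{0\le i\le k}\|x_i-x^*\|^2\le \sum_{i=0}^k\|x_i-x^*\|^2\le \frac{c_{\mathrm{eb}}^2}{\bar c}\bigl(f(x_0)-f(x^*)\bigr).
\]
Taking square roots gives~\eqref{eq:convergence} with $C=\bar c/c_{\mathrm{eb}}^2$.

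The reduction to an eventually constant $\mu$ is handled as in Section~\ref{sec:3.2}. If needed, we take $\mu$ to be the smallest value attained by $\mu_k$, so that $c_{\mathrm{eb}}$ and $\bar c$ are uniform over all iterations.
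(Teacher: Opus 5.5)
Your proof is correct, and its skeleton matches the paper's: an error bound $\|x-x^*\|\le c\,\|\eta_\mu(x)\|$, fed into the sufficient decrease \eqref{decrease_objective_function} and telescoped. You differ in how the error bound is obtained.

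The paper stays at the level of the residual. It plugs the inclusion of Lemma~\ref{lem_mu_optimality} into the $\rho$-hypomonotonicity of $\hat\partial h$ at the pair $(x^+,x^*)$ and substitutes $x^+-x^*=(x-x^*)-\mu\eta_\mu(x)$. It then uses the $m$-strong monotonicity and $L$-Lipschitz continuity of $\nabla g$ and closes with Young's inequality. The result is $(m-\rho)^2\|x-x^*\|^2\le\bigl[(|1-2\mu\rho|+\mu L)^2-2\mu(1-\mu\rho)(m-\rho)\bigr]\|\eta_\mu(x)\|^2$.

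You instead reuse the subgradient $\xi\in\hat\partial f(x^+)$ with $\|\xi\|\le(1+L\mu)\|\eta_\mu(x)\|$ from Theorem~\ref{thm:subgradient_rate}. You apply the $(m-\rho)$-strong monotonicity of $\partial f$ between $x^+$ and $x^*$, and return to $x$ by the triangle inequality.

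Your argument is shorter and more modular, and your constant $c_{\mathrm{eb}}=\mu+\frac{1+L\mu}{m-\rho}$ is visibly positive. The paper's $C$ tacitly needs the bracket above to be positive; it is, using $m\le L$ and $\mu\rho<1$, but the paper leaves this unchecked. In exchange, the paper's constant is sharper. Since $|1-2\mu\rho|\le 1$, its bracket is below $(1+\mu L)^2<\bigl(1+\mu L+\mu(m-\rho)\bigr)^2=(m-\rho)^2c_{\mathrm{eb}}^2$, so its $C$ is larger than your $\bar c/c_{\mathrm{eb}}^2$.

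Two minor remarks. First, your Step~3 already gives $\sum_i\|x_i-x^*\|^2<\infty$, hence $x_k\to x^*$ directly. This is how convergence of the whole sequence falls out of the paper's argument, so Step~1 is needed only for existence and uniqueness of $x^*$, not for the accumulation-point reasoning. Second, if you want constants uniform in $k$ while $\mu_k$ varies, note that $c_{\mathrm{eb}}(\mu)$ is increasing in $\mu$. The error bound should therefore use the largest value $\mu_0$, not the smallest; it is $\bar c$ that needs the lower bound on $\mu_k$.
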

\begin{proof}
For any $v_x \in \hat\partial h(x^+)$ and $v_y \in \hat\partial h(y^+)$, the $\rho$-weak convexity of $h$ yields~\cite{Davis2019}:
\[(v_x - v_y)^\top (x^+ - y^+) \ge -\rho \|x^+ - y^+\|^2.
\]
From \eqref{lem_mu_optimality}, it follows that
\begin{equation}\label{R_linear_formula_1}
(\eta_\mu({x}) - \eta_\mu({y}))^\top({x}^+-{y}^+) \geq (\nabla g({x})-\nabla g({y}))^\top({x}^+-{y}^+)-\rho \|x^+ - y^+\|^2.
\end{equation}
Since $g$ is $m$-strongly convex and $h$ is $\rho$-weakly convex, it follows that $f$ is $(m - \rho)$-strongly convex. 
The condition $m > \rho$ ensures that $f$ possesses a unique minimizer $x^*$.
Substituting $x^+ - y^+ = (x-y) - \mu(\eta_\mu(x) - \eta_\mu(y))$ from \eqref{function_eta} into \eqref{R_linear_formula_1} and setting $y = x^*$ with $\eta_\mu(x^*) = 0$ yields
\begin{align}\label{R_linear_formula_2}
    &\eta_\mu({x})^\top ({x}-{x}^*) 
    + \mu\eta_\mu({x})^\top(\nabla g({x})-\nabla g({x}^*))\\
    &\qquad\geq
    (\nabla g({x})-\nabla g({x}^*))^\top ({x}-{x}^*) 
    + \mu\|\eta_\mu({x})\|^2
    -\rho\|x-x^*-\mu \eta_\mu(x)\|^2. \notag
\end{align}
Using the $m$-strong convexity of $g$ and $\|x - x^* - \mu \eta_\mu(x)\|^2 = \|x - x^*\|^2 - 2\mu (x - x^*)^\top \eta_\mu(x) + \mu^2 \|\eta_\mu(x)\|^2$,  \eqref{R_linear_formula_2} can be rewritten as 
\begin{align}\label{R_linear_formula_3}
    (1-2\mu\rho)\eta_\mu({x})^\top ({x}-{x}^*) 
    &+ \mu\eta_\mu({x})^\top(\nabla g({x})-\nabla g({x}^*))\\
    &\quad\geq
    (m-\rho)\|x-x^*\|^2 +\mu(1-\mu\rho)\|\eta_\mu(x)\|^2.\notag
\end{align}
Applying the Cauchy-Schwarz inequality, the Lipschitz continuity of $\nabla g$ in~\eqref{lipschitz_continuous_1}, and the inequality \( ab \leq \frac{1}{2}(a^2 + b^2) \) with $a = \frac{(|1-2\mu\rho|+\mu L)\|\eta_\mu({x})\|}{\sqrt{m-\rho}}$ and $b = \sqrt{m-\rho}\|{x}-{x}^*\|$, the left-hand side of \eqref{R_linear_formula_3} is bounded by
\begin{align*}
(1-2\mu\rho)\eta_\mu(x)^\top (x-x^*) 
&+ \mu \eta_\mu(x)^\top(\nabla g(x)-\nabla g(x^*)) \\
&\leq (|1-2\mu\rho|+\mu L)\|\eta_\mu(x)\|\|x-x^*\| \\
&\leq \frac{(|1-2\mu\rho|+\mu L)^2}{2(m-\rho)}\|\eta_\mu(x)\|^2 + \frac{m-\rho}{2}\|x-x^*\|^2,
\end{align*}
where $m > 0$. Combining these results, \eqref{R_linear_formula_3} simplifies to
\begin{equation*}
\frac{m-\rho}{2} \|x-x^*\|^2 \leq \left(\frac{(|1-2\mu\rho|+\mu L)^2}{2(m-\rho)}-\mu(1-\mu\rho)\right)\|\eta_\mu(x)\|^2.
\end{equation*}
In view of~\eqref{decrease_objective_function}, defining $C = \frac{\bar{c}(m-\rho)}{{2} \left( \frac{(|1-2\mu\rho|+\mu L)^2}{2(m-\rho)}-\mu(1-\mu\rho) \right)}$ implies
\begin{equation*}
f(x_k) - f(x_{k+1}) \geq \bar{c}\|\eta_k\|^2 \geq C\|x_k-x^*\|^2.
\end{equation*}
Therefore,
\begin{equation*}
    f({x}_0)-f({x}^*)
    \geq f({x}_0)-f({x}_{k+1})
    \geq C \sum_{i=0}^{k} \|{x}_i-{x}^*\|^2
    \geq C(k+1)\min_{0\le i\le k}\|{x}_i-{x}^*\|^2,
\end{equation*}
which yields~\eqref{eq:convergence}.
\end{proof}
\begin{remark}
When the function $h$ is convex (i.e., $\rho = 0$), the result in Theorem~\ref{theorem:convergence_rate} naturally covers the convergence analysis for Algorithm~\ref{alg:proposed}. In this case, the constant $C$ simplifies to $C = \frac{\bar{c} m}{2 \left( \frac{(1+\mu L)^2}{2m} - \mu \right)}$.
\end{remark}
To the best of our knowledge, theoretical bounds under this specific metric are relatively understudied.

\section{Numerical experiments}\label{sec:numerical_experiment}
In this section, we evaluate the numerical performance of Algorithms~\ref{alg:proposed} and \ref{alg:proposed_weakly}.
Sections~\ref{sec:ne_lasso}--\ref{sec:ne_student_t_l1} evaluate Algorithm~\ref{alg:proposed} on the Lasso problem, the $\ell_1$-regularized logistic regression problem, and the $\ell_1$-regularized Student's $t$-regression problem.
Section~\ref{sec:ne_ls_MCP} evaluates Algorithm~\ref{alg:proposed_weakly} on MCP-regularized least squares problems, where $h$ is weakly convex. 
Note that in Sections~\ref{sec:ne_lasso}--\ref{sec:ne_logistic_l1}
$g$ and $h$ are convex, in Section~\ref{sec:ne_student_t_l1} $g$ is nonconvex and $h$ is convex, and in Section~\ref{sec:ne_ls_MCP} 
$g$ is convex and $h$ is weakly convex. 
For comparison, we consider solvers from the TFOCS package~\cite{tfocs}, which provides various implementations of the proximal gradient method and its accelerated variants, as well as the PNOPT package~\cite{pnopt}, which provides a solver for proximal Newton-type methods.
Specifically, from the TFOCS package, we employ TFOCS-GRA (the standard proximal gradient method) and default solver, denoted as PGM and TFOCS, respectively.
We also use the default settings for the PNOPT package, denoted as PNOPT.
It should be noted that, following the literature, PGM and TFOCS employ a strategy that slightly increases the step size $\mu_k$ (e.g., $\mu_{k} = \mu_{k-1}/0.9$) at each iteration~\cite{Becker2011}.
All numerical experiments were conducted in MATLAB R2024a on a machine running Windows 11 Pro, equipped with an Intel Core i7 processor (3.2 GHz) and 32 GB of RAM.

In addition to Algorithm~\ref{alg:proposed}, we consider a variant, denoted by Algorithm~3', which incorporates quadratic interpolation into the line search procedure.
Let $\alpha_k^{(i)}$ and $\tau_k^{(i)}\in (0,1)$ denote the step size and the step size scaling factor at the $i$th line search iteration, respectively, and define the function $\phi_k(\alpha) = f(x_k + \alpha d_k)$.
When the Armijo condition~\eqref{linesearch_condition_for_proposed} is not satisfied, the step size scaling factor by quadratic interpolation $\tau_{k,\mathrm{quad}}^{(i)}$ is computed as
\[
\tau_{k,\mathrm{quad}}^{(i)}
=
-\frac{\phi_k'(0)\alpha_k^{(i)}}
{2\bigl(\phi_k(\alpha_k^{(i)})-\phi_k(0)-\phi_k'(0)\alpha_k^{(i)}\bigr)}.
\]
Here, $\phi_k(0) = f(x_k)$ and, since $h$ may be nondifferentiable, we employ
$\phi_k'(0) = \nabla g(x_k)^\top d_k + h'(x_k; d_k)$,
where $h'(x_k; d_k)$ denotes the directional derivative of $h$ at $x_k$ along the direction $d_k$.
To ensure that the updated step size lies in $(0,\alpha_k^{(i)})$,
we restrict the scaling factor to $[10^{-8},\,0.99]$.
More precisely, we set 
\[
\tau_k^{(i)} =
\min\left\{
0.99,\,
\max\left\{10^{-8},\,
\tau_{k,\mathrm{quad}}^{(i)}
\right\}
\right\}
\]
and update the step size by $\alpha_k^{(i+1)} = \tau_k^{(i)} \alpha_k^{(i)}.$
Algorithm~3' retains the convergence properties of Algorithm~\ref{alg:proposed}  since it replaces $\tau\in(0,1)$ with $\tau_k^{(i)}\in [10^{-8},0.99]$.
We note that employing quadratic interpolation within the line search is a well-known standard technique in nonlinear conjugate gradient methods \cite{bookNocedal2006, bookGrippo2023}. 
By incorporating local derivative information, quadratic interpolation provides a better step size estimate and reduces the number of function evaluations per iteration, thereby accelerating practical convergence.

\subsection{LASSO problem}\label{sec:ne_lasso}
In this section, we evaluate the numerical performance of the proposed Algorithm~\ref{alg:proposed} by solving LASSO problem formulated as follows:
\begin{equation}\label{lasso}
    \min_{{x} \in \mathbb{R}^n} \| A{x} - {b} \|^2 
    + \lambda \|{x}\|_1
\end{equation}
where $m$ denotes the number of data samples, $n$ the number of features, and $\lambda$ the regularization parameter. 
The matrix $A \in \mathbb{R}^{m \times n} $ is randomly generated, with each element sampled independently from the uniform distribution over the interval $[0,1)$. 
The vector ${b}$ is generated according to
\begin{align*}
    {b} = A\tilde{{x}} + 0.01{\epsilon},
\end{align*}
where \(\tilde{{x}} \in \mathbb{R}^n\) is a sparse vector with $s>0$ components randomly set to 1 and the remaining entries set to 0.
The noise vector ${\epsilon} \in \mathbb{R}^m$ has entries independently drawn from the standard normal distribution. 
In Algorithm~\ref{alg:proposed}, we set the parameters as follows: $\mu_{-1}=1$, $\kappa=1/2$, $\bar{\nu}=10^{-8}$, $\delta =10^{-4}$, $T=10^{-3}$, $\theta=1/2$, $\bar{t}={1}/{2^{20}}$, and $\tau=1/2$. 
In these experiments, we used seven datasets listed in Table~\ref{tab:lasso_data}.
\begin{table}[htbp]
\centering
\caption{Problem scale settings $(m, n, s)$}
\label{tab:lasso_data}
\begin{tabular}{|l|l|}
\hline
Category & $(m, n, s)$ \\ \hline
$m < n$  & (500, 550, 50), (1000, 1050, 50), \\
$m > n$  & (500, 150, 30), (1000, 300, 60), (3000, 500, 180), \\
         & (5000, 1500, 300), (7000, 2000, 400) \\ \hline
\end{tabular}
\end{table}
For the large-scale problem with \( (m, n, s) = (7000, 2000, 400) \), $50\%$ of the elements of the matrix A are set to 0, resulting in a sparse matrix. 
Algorithm~\ref{alg:proposed}, Algorithm~3' and PNOPT are terminated when \( \frac{\|{x}_{k}^+ - {x}_k\|}{\textnormal{max}\{1,\|{x}_k\|\}} \leq 10^{-8} \).
PGM and TFOCS are terminated when \( \frac{\|{x}_{k+1} - {x}_k\|}{\textnormal{max}\{1,\|{x}_k\|\}} \leq 10^{-8} \), following their default settings.
For each type of dataset, 10 random instances were created, and experiments were conducted for each instance. 
The initial point was set to \( {x}_0 = (0,...,0)^\top \). 
We demonstrate the effectiveness of our proposed algorithm by the performance profiles of Dolan and Moré~\cite{Dolan2002}.
\begin{quote}
For $n_s$ solvers and $n_p$ problems, the performance profiles $P : \mathbb{R}^n \to [0, 1]$ are defined as follows.
Consider sets $\mathcal{P}_r$ and $\mathcal{S}$, which represent the set of problems and solvers, respectively.
For each problem $p \in \mathcal{P}_r$ and each solver $s \in \mathcal{S}$, $t_{p,s}$ is defined as the CPU time required for solver $s$ to solve problem $p$. 
The performance ratio is given by $r_{p,s} = \frac{t_{p,s}}{\min_s t_{p,s}}$. 
The performance profile is then defined as $P(\tau) = \frac{1}{n_p} \left| \left\{ p \in \mathcal{P}_r \,\middle|\, r_{p,s} \leq \tau \right\} \right|$ 
for all $\tau \geq 1$. 
Here, $P(\tau)$ represents the probability that a solver in $\mathcal{S}$ achieves a performance ratio within a factor $\tau$ of the best performance.
\end{quote}
The numerical results are summarized in Tables~\ref{table:computation_time_lasso} and \ref{table:iteration_count_lasso}, and the corresponding performance profiles are illustrated in Figures~\ref{fig:all_lamb01} and \ref{fig:all_lamb001} for $\lambda = 0.1$ and $\lambda = 0.01$, respectively.
\begin{table}[htbp]
\centering
\caption{CPU time (s)}
\label{table:computation_time_lasso}
\begin{tabular}{ccccccc}
\toprule
$\lambda$ & $(m,n,s)$ & Algorithm~\ref{alg:proposed} & Algorithm 3' & PGM & TFOCS & PNOPT \\
\midrule
0.1  & (500, 550, 50)    & 5.7   & \textbf{1.7}  & 71.9  & 29.0  & 14.6 \\
0.1  & (1000, 1050, 50)  & 70.3  & \textbf{7.2}  & 468.1 & 120.0 & 17.5 \\
0.1  & (500, 150, 50)    & 0.3   & \textbf{0.1}  & 2.9   & 3.3   & 1.4 \\
0.1  & (1000, 300, 60)   & 2.0   & \textbf{0.5}  & 10.4  & 7.3   & 3.7 \\
0.1  & (3000, 500, 180)  & 11.9  & \textbf{1.8}  & 52.7  & 42.2  & 3.3 \\
0.1  & (5000, 1500, 300) & 170.9 & \textbf{19.3} & 1733.3 & 519.6 & 37.4 \\
0.1  & (7000, 2000, 400) & 160.5 & 21.0 & 442.5 & 249.0 & \textbf{8.8} \\
\midrule
0.01 & (500, 550, 50)    & 22.8  & \textbf{3.8}  & 635.5     & 71.2   & 141.6 \\
0.01 & (1000, 1050, 50)  & 162.6 & \textbf{36.5} & 3747.6 & 336.7 & 336.3 \\
0.01 & (500, 150, 50)    & 0.3   & \textbf{0.1}  & 3.2   & 2.8   & 1.7 \\
0.01 & (1000, 300, 60)   & 3.4   & \textbf{0.4}  & 16.5  & 12.1  & 2.3 \\
0.01 & (3000, 500, 180)  & 13.5  & \textbf{0.8}  & 52.8  & 47.7  & 1.8 \\
0.01 & (5000, 1500, 300) & 190.6 & 43.4 & 1219.8 & 415.5 & \textbf{4.8} \\
0.01 & (7000, 2000, 400) & 175.2 & 16.8 & 599.1 & 404.1 & \textbf{5.0} \\
\bottomrule
\end{tabular}
\end{table}
\begin{table}[htbp]
\centering
\caption{Number of iterations}
\label{table:iteration_count_lasso}
\begin{tabular}{c c c c c c c}
\toprule
$\lambda$ & $(m,n,s)$ & Algorithm~\ref{alg:proposed} & Algorithm 3' & PGM & TFOCS & PNOPT \\
\midrule
0.1  & (500, 550, 50)   & 1198.0 & 890.2 & 52193.4 & 12858.8 & \textbf{149.0} \\
0.1  & (1000, 1050, 50)  & 1746.1 & 882.2  & 104524.8 & 15278.8 & \textbf{239.5} \\
0.1  & (500, 150, 50)   & 333.8  & 232.4 & 3630.6  & 3029.2  & \textbf{46.6} \\
0.1  & (1000, 300, 60)  & 414.8  & 287.5 & 6726.0  & 3545.2  & \textbf{48.3} \\
0.1  & (3000, 500, 180) & 296.2  & 177.3 & 10059.3 & 6073.4  & \textbf{104.5} \\
0.1  & (5000, 1500, 300)& 494.2  & \textbf{274.5} & 39599.8 & 8520.9  & 832.7 \\
0.1  & (7000, 2000, 400) & 298.4  & 175.2  & 8789.4  & 3661.3  & \textbf{83.6} \\
\midrule
0.01 & (500, 550, 50)    & 2375.9 & \textbf{1652.4} & 454675.5   & 34017.6   & 5800.7 \\
0.01 & (1000, 1050, 50)  & 3310.6 & 2281.6 & 812714.4 & 39423.1 & \textbf{239.5} \\
0.01 & (500, 150, 50)    & 243.0  & 140.3  & 5479.8  & 3682.3  & \textbf{47.7} \\
0.01 & (1000, 300, 60)   & 303.8  & 163.7  & 10707.2 & 5279.6  & \textbf{61.4} \\
0.01 & (3000, 500, 180)  & 175.5  & 54.6   & 11384.8 & 7042.9  & \textbf{38.7} \\
0.01 & (5000, 1500, 300) & 363.2  & 408.0  & 45033.8 & 11007.5 & \textbf{45.0} \\
0.01 & (7000, 2000, 400) & 201.0  & 105.7  & 10353.0 & 4951.4  & \textbf{45.0} \\
\bottomrule
\end{tabular}
\end{table}
\begin{figure}[htbp]
    \centering
    \begin{subfigure}[b]{0.45\textwidth}
        \centering
        \includegraphics[width=\textwidth]
        {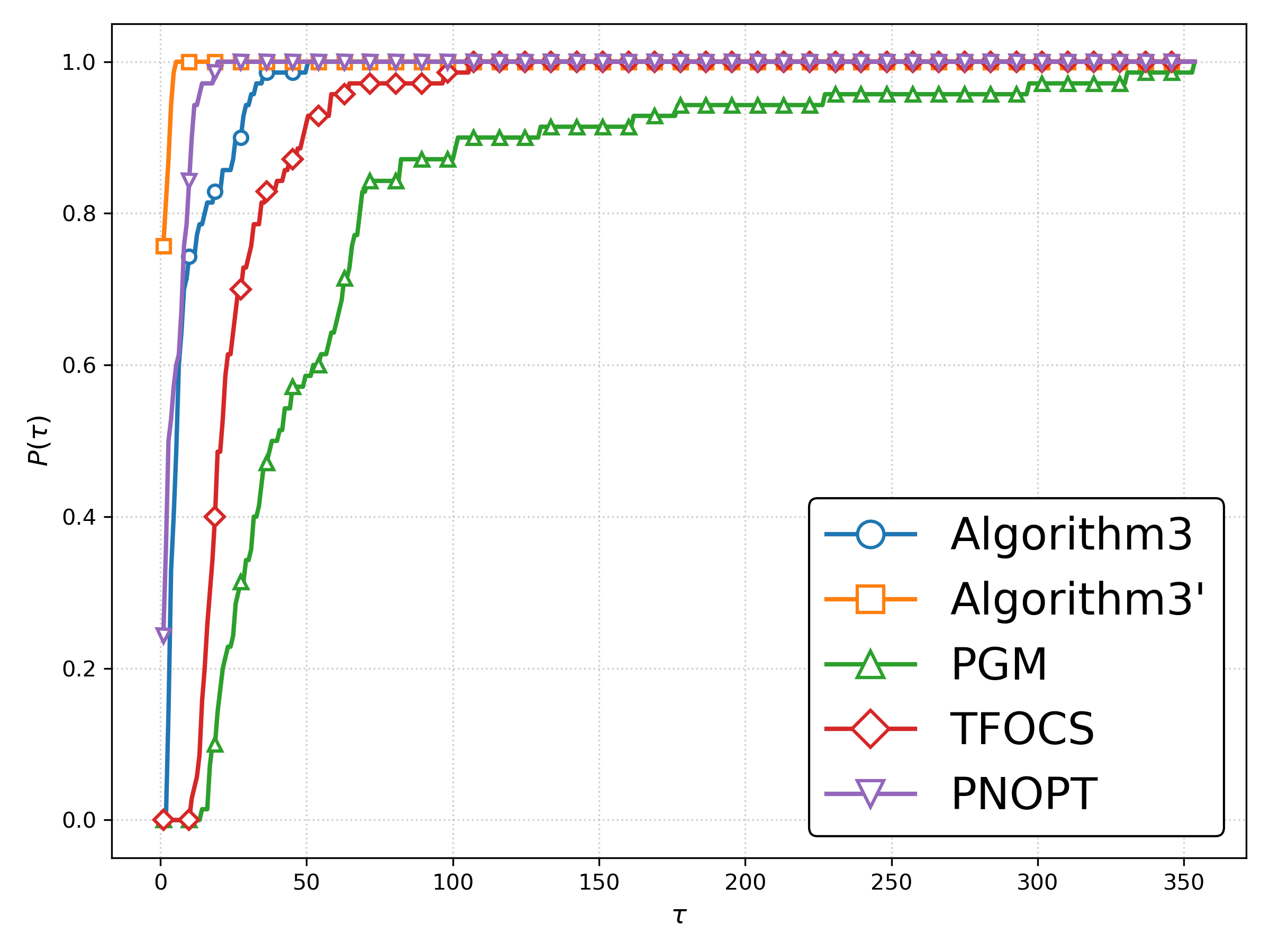} 
        \caption{CPU Time}
    \end{subfigure}
    \hfill
    \begin{subfigure}[b]{0.45\textwidth}
        \centering
        \includegraphics[width=\textwidth]
        {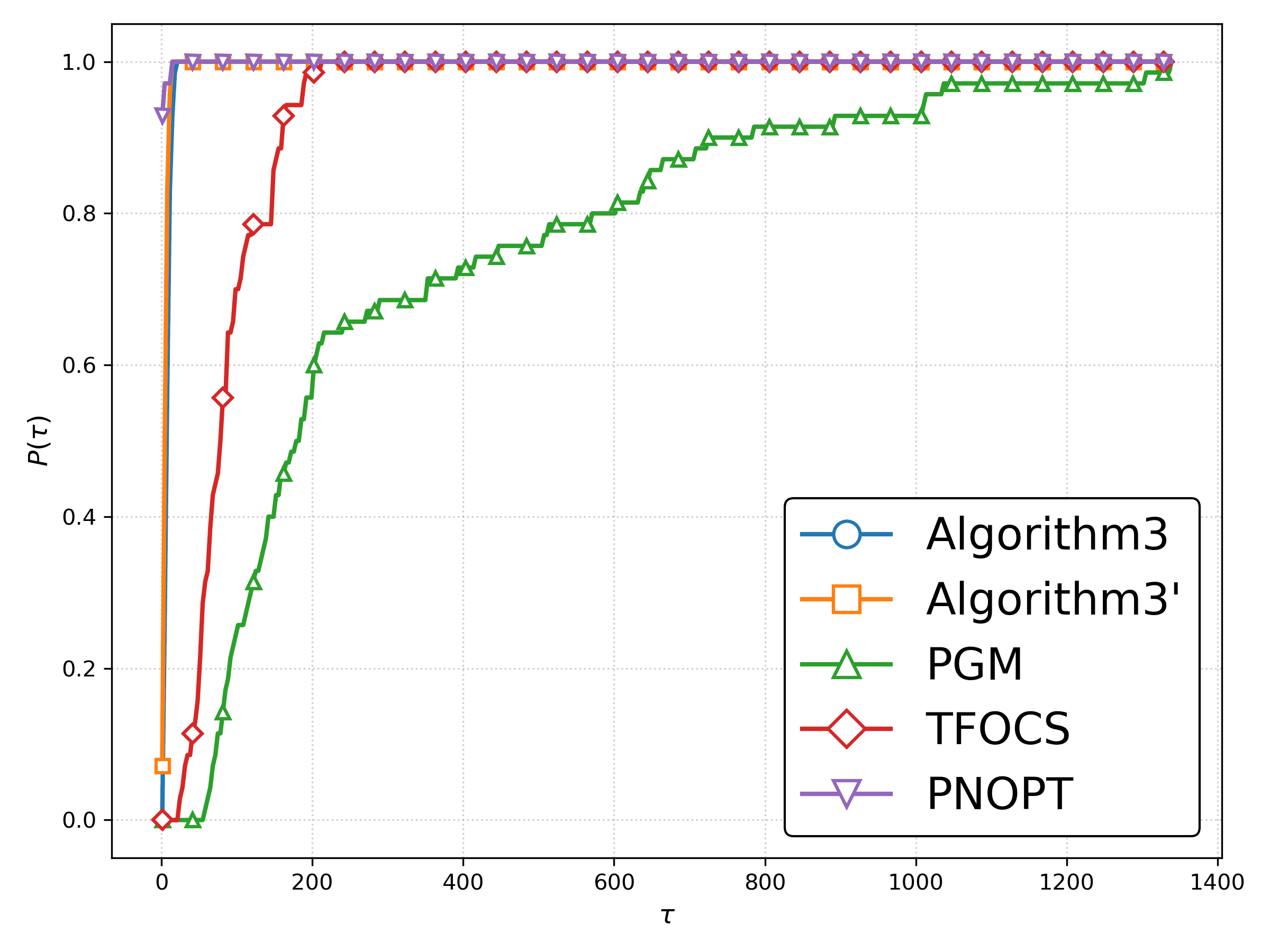} 
        \caption{Iteration Count}
    \end{subfigure}
    \caption{Performance profiles for $\lambda = 0.1$ over all datasets}
    \label{fig:all_lamb01}
\end{figure}
\begin{figure}[htbp]
    \centering
    \begin{subfigure}[b]{0.45\textwidth}
        \centering
        \includegraphics[width=\textwidth]
        {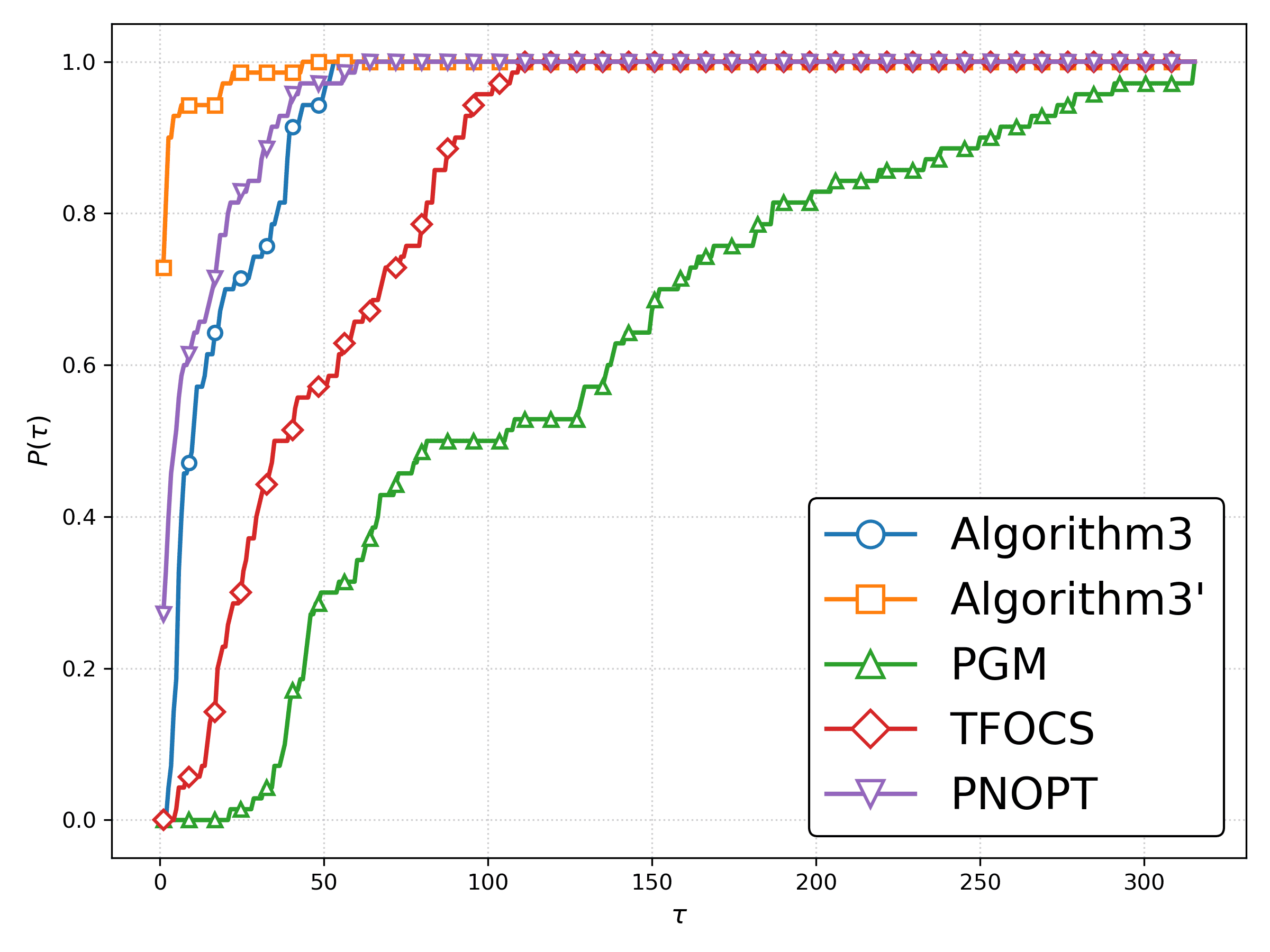} 
        \caption{CPU Time}
    \end{subfigure}
    \hfill
    \begin{subfigure}[b]{0.45\textwidth}
        \centering
        \includegraphics[width=\textwidth]
        {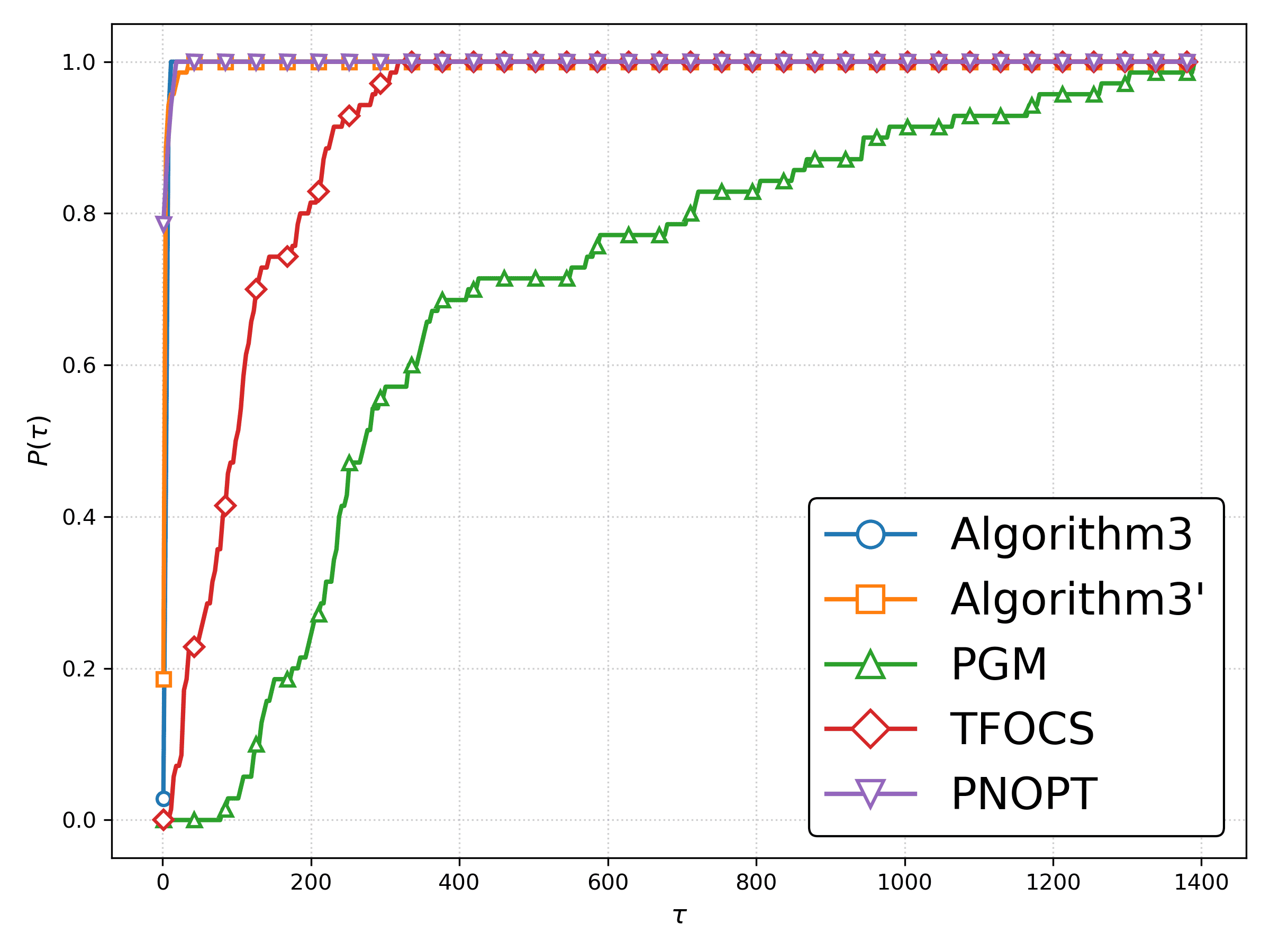} 
        \caption{Iteration Count}
    \end{subfigure}
    \caption{Performance profiles for $\lambda = 0.01$ over all datasets}
    \label{fig:all_lamb001}
\end{figure}
From these results, we observe that Algorithm~3' achieves the best performance in terms of CPU time for most datasets. 
In particular, it significantly outperforms first-order methods such as PGM and TFOCS.
In terms of the number of iterations, Algorithm~\ref{alg:proposed} and 3' follow PNOPT as the second most efficient, consistently requiring fewer iterations than PGM and TFOCS.
While PNOPT yields the smallest iteration counts, this is expected as it is a quasi-Newton-type method that incorporates second-order information.
However, since each iteration of PNOPT is computationally expensive, Algorithm~3' achieves better efficiency in terms of CPU time.
Comparing Algorithm~\ref{alg:proposed} and Algorithm~3', the latter demonstrates superior performance in both CPU time and iteration count. 
As anticipated from the literature, incorporating quadratic interpolation effectively refines the step size estimation and reduces the number of line search trials, thereby significantly enhancing the overall computational efficiency.
Furthermore, as the problem size increases, CPU times and the iteration counts of PGM and TFOCS increase significantly, whereas Algorithm~\ref{alg:proposed} and 3' maintain relatively stable performance.

Next, we examine the convergence behavior of each method.
Figures~\ref{fig:lasso_lamb01_convergence_500_550} and 
\ref{fig:lasso_lamb01_convergence_3000} present the convergence results for representative instances with $(m,n,s)=(500,550,50)$ and $(m,n,s)=(3000,500,180)$, respectively, both using $\lambda=0.1$.
\begin{figure}[htbp]
    \centering
    \begin{subfigure}[b]{0.45\textwidth}
        \centering
        \includegraphics[width=\textwidth]
        {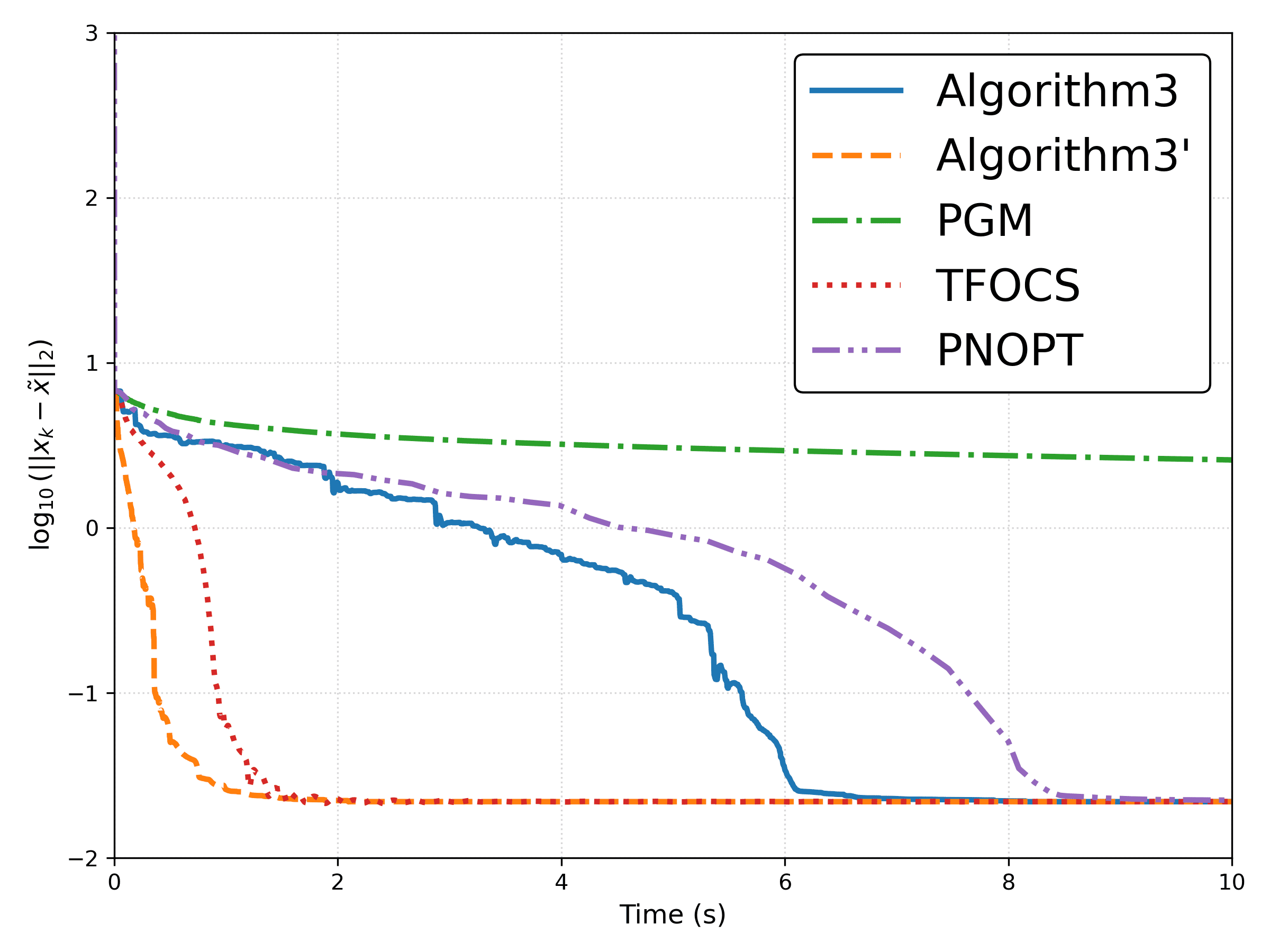}
        \caption{Convergence vs. CPU time}
    \end{subfigure}
    \hfill
    \begin{subfigure}[b]{0.45\textwidth}
        \centering
        \includegraphics[width=\textwidth]
        {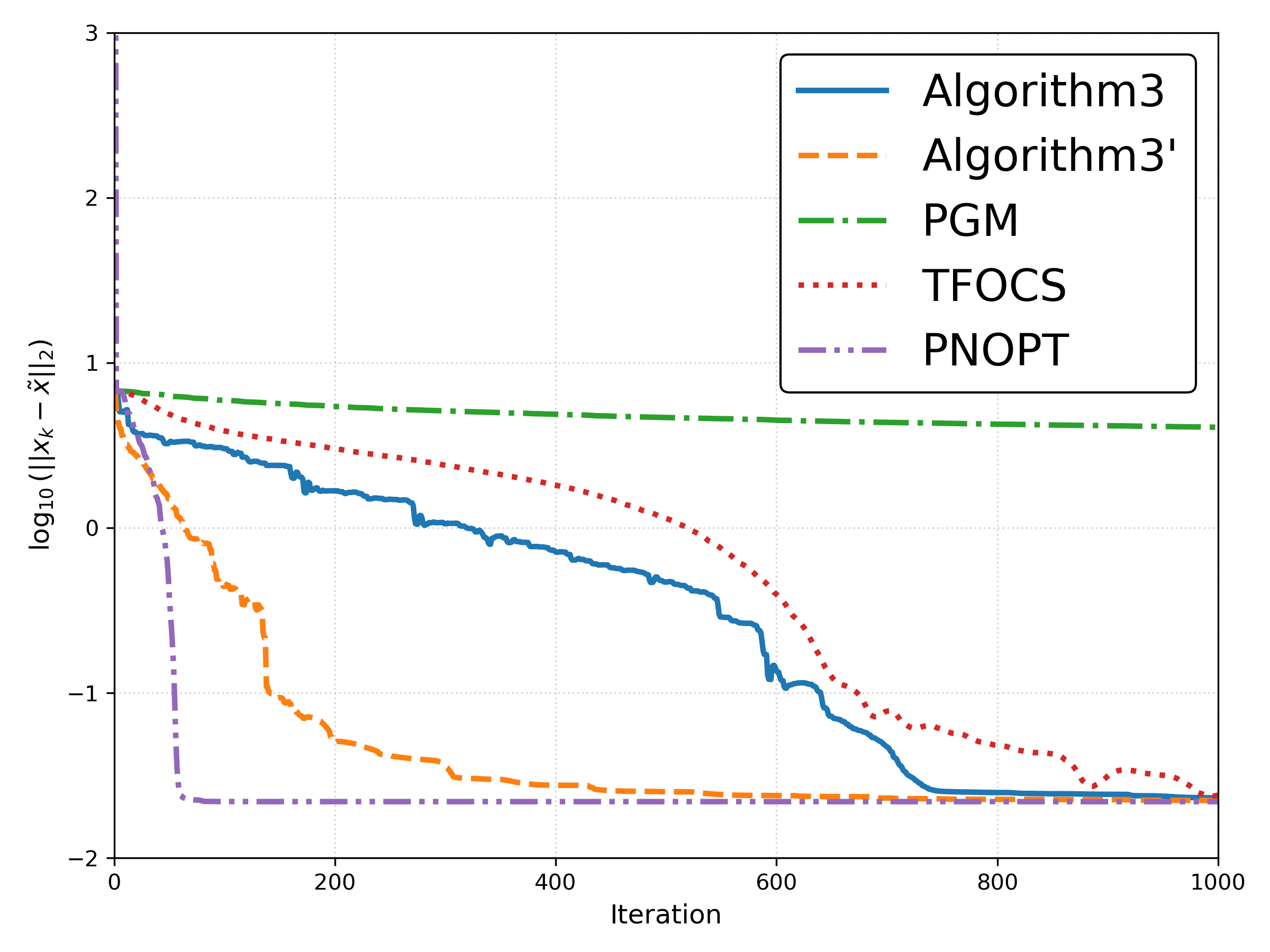}
        \caption{Convergence vs. iteration count}
    \end{subfigure}
    \caption{Convergence behavior for $(m,n,s)=(500,550,50)$ with $\lambda = 0.1$}
    \label{fig:lasso_lamb01_convergence_500_550}
\end{figure}
\begin{figure}[htbp]
    \centering
    \begin{subfigure}[b]{0.45\textwidth}
        \centering
        \includegraphics[width=\textwidth]
        {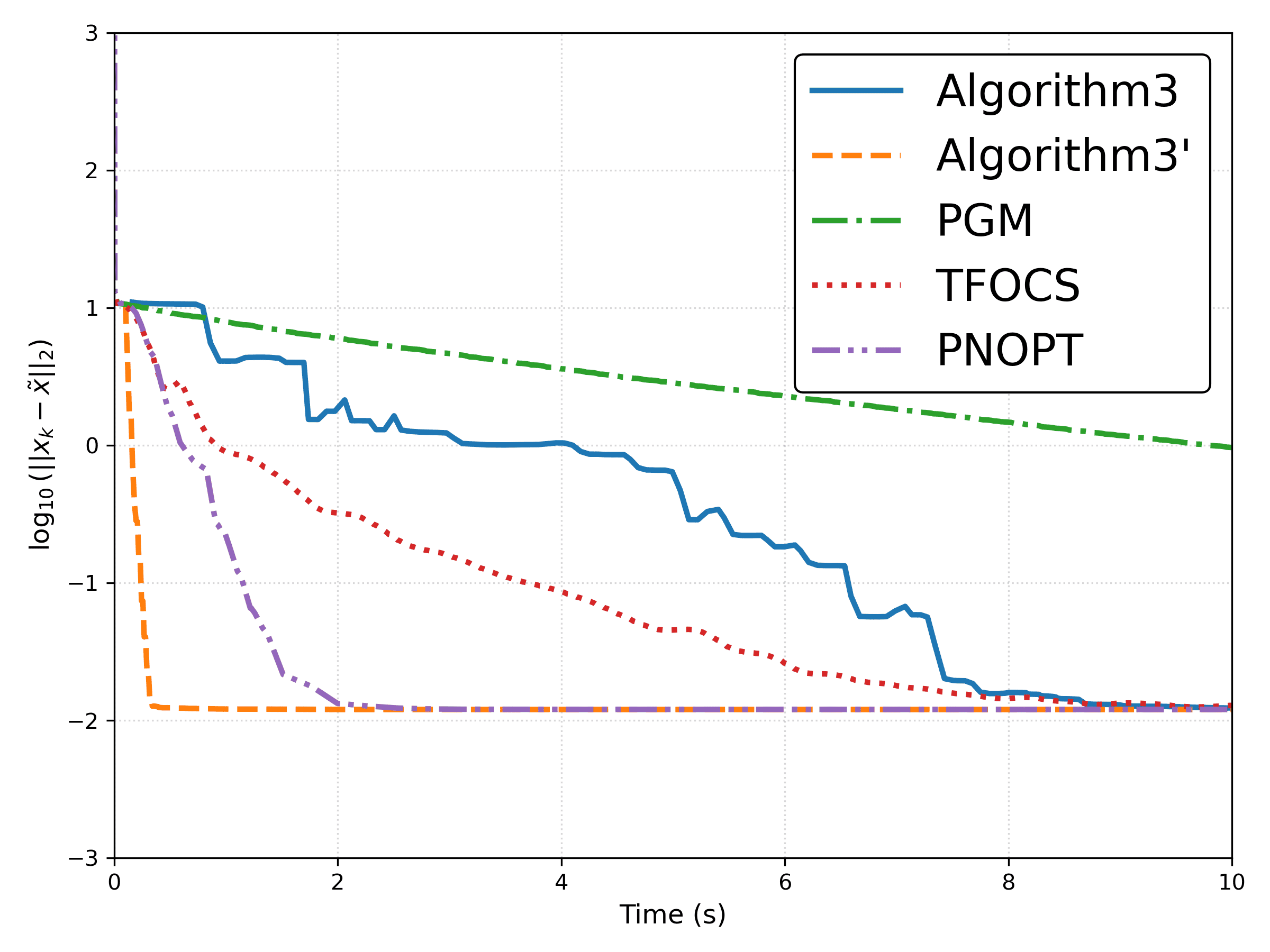}
        \caption{Convergence vs. CPU time}
    \end{subfigure}
    \hfill
    \begin{subfigure}[b]{0.45\textwidth}
        \centering
        \includegraphics[width=\textwidth]
        {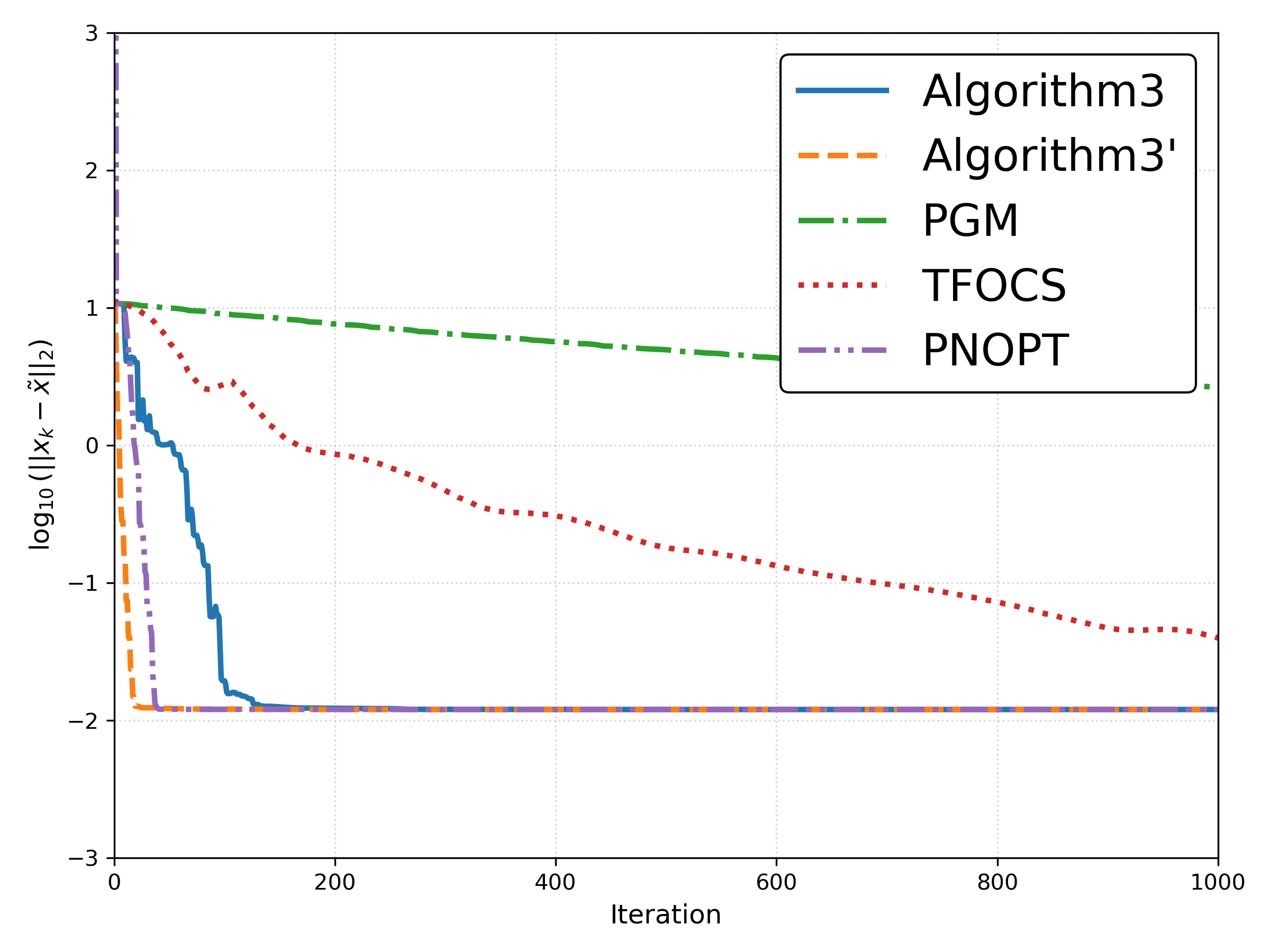}
        \caption{Convergence vs. iteration count}
    \end{subfigure}
    \caption{Convergence behavior for $(m,n,s)=(3000,500,180)$ with $\lambda = 0.1$}
    \label{fig:lasso_lamb01_convergence_3000}
\end{figure}
For both instances, Algorithm~3' demonstrates superior efficiency in CPU time, confirming the ideal balance between iteration count and per-iteration cost discussed above.
Notably, for the larger problem $(m,n,s)=(3000,500,180)$, Algorithm~3' outperforms all other methods in both metrics, exhibiting stable and rapid convergence from the early stages.

Finally, Table~\ref{table:switch_count_lasso} shows the number of times that the condition~\eqref{conditional_branch} of Algorithm~\ref{alg:proposed} and~3' were not satisfied, as well as the ratio to the total number of iterations.
\begin{table}[htbp]
\centering
\caption{Switch count and ratio for Algorithm~\ref{alg:proposed} and Algorithm 3'}
\label{table:switch_count_lasso}
\begin{tabular}{cccccccc}
\toprule
 &  & \multicolumn{3}{c}{Algorithm~\ref{alg:proposed}} & \multicolumn{3}{c}{Algorithm 3'} \\
\cmidrule(lr){3-5} \cmidrule(lr){6-8}
$\lambda$ & $(m,n,s)$ & Iter. & Switch & Ratio (\%) & Iter. & Switch & Ratio (\%) \\
\midrule
0.1  & (500, 550, 50)    & 1198.0 & 119.7 & 10.0 & 890.2  & 116.8 & 13.1 \\
0.1  & (1000, 1050, 50)  & 1746.1 & 123.1 & 7.0  & 882.2  & 122.9 & 13.9 \\
0.1  & (500, 150, 50)    & 333.8  & 39.6  & 11.9 & 232.4  & 33.7  & 14.5 \\
0.1  & (1000, 300, 60)   & 414.8  & 54.6  & 13.2 & 287.5  & 44.1  & 15.3 \\
0.1  & (3000, 500, 180)  & 296.2  & 3.7   & 1.2  & 177.3  & 5.9   & 3.3 \\
0.1  & (5000, 1500, 300) & 494.2  & 7.2   & 1.5  & 274.5  & 8.3   & 3.0 \\
0.1  & (7000, 2000, 400) & 298.4  & 0.3   & 0.1  & 175.2  & 1.0   & 0.6 \\
\midrule
0.01 & (500, 550, 50)    & 2375.9 & 1.3   & 0.1  & 1652.4 & 2.8   & 0.2 \\
0.01 & (1000, 1050, 50)  & 3310.6 & 0.1   & 0.0  & 2281.6 & 0     & 0.0 \\
0.01 & (500, 150, 50)    & 243.0  & 3.1   & 1.3  & 140.3  & 2.2   & 1.6 \\
0.01 & (1000, 300, 60)   & 303.8  & 0.2   & 0.1  & 163.7  & 0     & 0.0 \\
0.01 & (3000, 500, 180)  & 175.5  & 0.0   & 0.0  & 54.6   & 0     & 0.0 \\
0.01 & (5000, 1500, 300) & 363.2  & 0.0   & 0.0  & 408.0  & 0     & 0.0 \\
0.01 & (7000, 2000, 400) & 201.0  & 0.0   & 0.0  & 105.7  & 0     & 0.0 \\
\bottomrule
\end{tabular}
\end{table}
From the results in Table~\ref{table:switch_count_lasso}, several important trends can be observed.
First, the ratio of switches is at most 15.3\% under all settings.
Second, we observe the influence of the regularization parameter $\lambda$.
Comparing the cases $\lambda = 0.1$ and $\lambda = 0.01$, the ratio of switches generally decreases as $\lambda$ becomes smaller.
Finally, the ratio of switches tends to decrease as the number of data points $m$ increases.
\subsection{\texorpdfstring{$\ell_1$}{L1}-regularized logistic regression problems}\label{sec:ne_logistic_l1}
In this section, we evaluate the numerical performance of \( \ell_1 \)-regularized logistic regression problems, which are given as follows:
\begin{equation*}
\min_{{x} \in \mathbb{R}^n} \sum_{i=1}^m \log\left(1 + \exp(-b_i {x}^\top {a}_i)\right) + \lambda \|{x}\|_1.
\end{equation*}
where, \( m \) denotes the number of data samples, \( n \) the number of features, \( {a}_i \in \mathbb{R}^n \) the \( i \)-th training data, \( b_i \) the \( i \)-th correct label of 1 or -1, and \( \lambda \) the regularization parameter.
In these experiments, we used the binary classification datasets a1a, a9a, leukemia, and gisette scale from LIBSVM~\cite{libsvm_data}.
The details of the datasets are summarized in Table~\ref{table:logistic_dataset}. 
\begin{table}[htbp]
    \centering
    \caption{Dataset information}
    \label{table:logistic_dataset}
    \begin{tabular}{ccccc} 
        \toprule
        Dataset & a1a & a9a & leukemia & gisette scale \\ 
        \midrule
        $m$ & 1605 & 32561 & 38 & 6000 \\
        $n$ & 123 & 123 & 7129 & 5000 \\
        \bottomrule
    \end{tabular}
\end{table}
The parameters and comparison methods used in the experiments are identical to those described in Section~\ref{sec:ne_lasso}.
Algorithm~\ref{alg:proposed}, Algorithm~3' and PNOPT terminate when $\frac{\|{x}_{k}^+ - {x}_k\|}{\textnormal{max}\{1,\|{x}_k\|\}} \leq 10^{-6}$.
PGM and TFOCS terminate when $\frac{\|{x}_{k+1} - {x}_k\|}{\textnormal{max}\{1,\|{x}_k\|\}} \leq 10^{-6}$.
For the gisette scale dataset, we increase $\mu_k$ at each iteration by setting $\mu_k = \mu_{k-1}/0.9$ for both Algorithm~\ref{alg:proposed} and~3' to improve numerical stability.
Table~\ref{table:computation_time_logistic} presents the computation times, and Table~\ref{table:iteration_count_logistic} shows the number of iterations.
\begin{table}[htbp]
\centering
\caption{CPU time (s)}
\label{table:computation_time_logistic}
\begin{tabular}{ccccccc}
\toprule
$\lambda$ & Dataset & Algorithm~\ref{alg:proposed} & Algorithm 3' & PGM & TFOCS & PNOPT \\
\midrule
0.1  & a1a           & 0.6   & \textbf{0.4}   & 11.6   & 7.4   & 29.3 \\
0.1  & a9a           & 20.4  & \textbf{5.0}   & 76.1   & 78.4  & 53.6 \\
0.1  & leukemia      & \textbf{1.0}   & 1.4   & 1.1    & 1.3   & 109.7 \\
0.1  & gisette scale & 721.9 & 626.5 & 2476.3 & 883.4 & \textbf{245.9} \\
\midrule
0.01 & a1a           & 0.6   & \textbf{0.3}   & 16.6   & 11.5  & 85.9 \\
0.01 & a9a           & 15.7  & \textbf{3.2}   & 93.4   & 76.5  & 98.8 \\
0.01 & leukemia      & 0.92  & \textbf{0.64}  & 2.5    & 1.6   & 171.3 \\
0.01 & gisette scale & 855.9 & 1486.4 & 3433.6 & 895.7 & \textbf{290.3} \\
\bottomrule
\end{tabular}
\end{table}
\begin{table}[htbp]
\centering
\caption{Number of iterations}
\label{table:iteration_count_logistic}
\begin{tabular}{ccccccc}
\toprule
$\lambda$ & Dataset & Algorithm~\ref{alg:proposed} & Algorithm 3' & PGM & TFOCS & PNOPT \\
\midrule
0.1  & a1a           & 551  & 642  & 22787 & 10218 & \textbf{184} \\
0.1  & a9a           & 956  & 933  & 30998 & 13413 & \textbf{377} \\
0.1  & leukemia      & 1130 & 1534 & 1044  & \textbf{958}  & 992 \\
0.1  & gisette scale & 5482 & 6995 & 60256 & 10989 & \textbf{566} \\
\midrule
0.01 & a1a           & 494  & 447  & 32402 & 15675 & \textbf{418} \\
0.01 & a9a           & 715  & 555  & 37067 & 13485 & \textbf{513} \\
0.01 & leukemia      & 1052 & \textbf{598} & 2831 & 1178 & 2246 \\
0.01 & gisette scale & 6147 & 14319 & 82917 & 11826 & \textbf{787} \\
\bottomrule
\end{tabular}
\end{table}
Similar to the LASSO experiments, Algorithm~3' achieves the best CPU time across most datasets, significantly outperforming other first-order methods such as PGM and TFOCS.
Regarding the comparison with PNOPT, although PNOPT yields the smallest iteration counts for several datasets, its high computational overhead per iteration results in longer CPU times for the other cases. 
This confirms that Algorithm~3' strikes a superior balance between convergence speed and per-iteration cost.
Furthermore, Algorithm 3' consistently outperforms Algorithm~\ref{alg:proposed}, confirming the effectiveness of the quadratic interpolation.
Finally, Table~\ref{table:switch_count_logistic} shows the number of times that~\eqref{conditional_branch} of Algorithm~\ref{alg:proposed} and~3' were not satisfied, as well as the ratio to the total number of iterations.
\begin{table}[htbp]
\centering
\caption{Switch count and ratio for Algorithm~\ref{alg:proposed} and Algorithm 3'}
\label{table:switch_count_logistic}
\begin{tabular}{cccccccc}
\toprule
 &  & \multicolumn{3}{c}{Algorithm~\ref{alg:proposed}} & \multicolumn{3}{c}{Algorithm 3'} \\
\cmidrule(lr){3-5} \cmidrule(lr){6-8}
$\lambda$ & Dataset & Iter. & Switch & Ratio (\%) & Iter. & Switch & Ratio (\%) \\
\midrule
0.1  & a1a           & 551  & 0    & 0.0  & 642  & 1    & 0.2 \\
0.1  & a9a           & 956  & 0    & 0.0  & 933  & 0    & 0.0 \\
0.1  & leukemia      & 1130 & 119  & 10.5 & 1534 & 81   & 5.3 \\
0.1  & gisette scale & 5482 & 1551 & 28.3 & 6995 & 1833 & 26.2 \\
\midrule
0.01 & a1a           & 494  & 0    & 0.0  & 447  & 0    & 0.0 \\
0.01 & a9a           & 715  & 0    & 0.0  & 555  & 0    & 0.0 \\
0.01 & leukemia      & 1052 & 20   & 2.0  & 598  & 19   & 3.2 \\
0.01 & gisette scale & 6147 & 1585 & 25.9 & 14319 & 2258 & 15.8 \\
\bottomrule
\end{tabular}
\end{table}
For the datasets a1a and a9a the conjugate gradient direction is adopted at every iteration. 
While the switching ratio increases as $n$ increases, it remains limited to 28.3\%.
These results confirm that the proposed algorithm effectively utilizes the conjugate gradient direction in most cases.
\subsection{\texorpdfstring{$\ell_1$}{L1}-Regularized Student's \texorpdfstring{$t$}{t}-Regression}\label{sec:ne_student_t_l1}
In this section, we evaluate the numerical performance of the proposed algorithm on the $\ell_1$-regularized Student's $t$-regression \cite{Aravkin2012} problem, given by:
\begin{equation}\label{student_t_regression}
    \min_{x \in \mathbb{R}^n} \sum_{i=1}^m \log\left( 1 + \frac{(Ax-b)_i^2}{\nu} \right) + \lambda \|x\|_1.
\end{equation}
with $A \in \mathbb{R}^{m \times n}$ and $b \in \mathbb{R}^m$.
Here, $m$ is the number of data samples, $n$ is the number of features, $\lambda > 0$ is the regularization parameter, and $\nu > 0$ is a tuning parameter.
Problem \eqref{student_t_regression} is a variation of the LASSO problem given in \eqref{lasso}, where the squared loss is replaced by the Student's $t$ loss. 
This formulation is commonly employed in robust regression due to its ability to reduce the influence of outliers. 
We note that the loss function is given by $g(x)=\sum_{i=1}^{m}\log\left(1+\frac{(a_i^\top x-b_i)^2}{\nu}\right)$ is generally nonconvex.

Following the procedure in~\cite{Milzarek2014}, we generate a reference signal $x^{\mathrm{true}} \in \mathbb{R}^n$ with $k = \lfloor n/40 \rfloor$ non-zero elements at random indices. 
Each non-zero component is defined as $x^{\mathrm{true}}_i = \eta_1(i) 10^{d\,\eta_2(i)/20}$, where $\eta_1(i) \in \{-1, +1\}$ is a random sign and $\eta_2(i)$ is uniformly distributed on the interval $[0, 1]$, providing a dynamic range of $d$ dB.
The measurement matrix $A \in \mathbb{R}^{m \times n}$ with $m = n/8$ is constructed via a random discrete cosine transform (dct) such that $Ax^{\mathrm{true}} = (\text{dct}(x^{\mathrm{true}}))_J$, where $J \subset \{1, \dots, n\}$ is a random index set with $|J| = m$.
Finally, the observation vector $b$ is obtained by adding Student's $t$-distributed noise with 5 degrees of freedom to $Ax^{\mathrm{true}}$ and scaling the result by 0.1.

We set the parameters to $\lambda = 0.01$ and $\nu = 0.001$.
The signal length $n$ and dynamic range $d$ are varied as $n \in \{128, 256, 512, 1024\}$ and $d \in \{20, 40\}$, respectively.
The comparison methods, parameters, and termination criteria are consistent with those in Section~\ref{sec:ne_lasso}. 
For all methods, the initial point $x_0$ is generated from a uniform distribution over $[-10, 10]$. 
As Algorithm~\ref{alg:proposed} and Algorithm~3' exhibit nearly identical performance, we report only the results for Algorithm~\ref{alg:proposed}.

First, Table~\ref{tab:n128_d20} shows the average results over 10 problem instances for $n=128$ and $d=20$.
\begin{table}[htbp]
\centering
\caption{Numerical results for $n=128$, $d=20$}
\label{tab:n128_d20}
\begin{tabular}{lcccc}
\hline
Method & Time (s) & Iter. & Function value & Result \\
\hline
Algorithm~\ref{alg:proposed} & 5.8 & 19254.7 & 0.1618 & Converged \\
PGM      & 756.5 & 2010122.1 & 0.1618 & Converged \\
TFOCS    & 23.0  & 54076.4  & 0.3014 & Not converged \\
PNOPT    & 7.8 & 196.0 & 5.6008 & Not converged \\
\hline
\end{tabular}
\end{table}
Algorithm~\ref{alg:proposed} successfully converged to the optimal solution and exhibited superior computational efficiency compared to the other methods.
In contrast, the objective values obtained by TFOCS and PNOPT remained significantly higher than those of the other methods, suggesting that these solvers did not converge.
Although PGM eventually converged, its convergence was considerably slower than that of Algorithm~\ref{alg:proposed}.
For these reasons, we focused on Algorithm~\ref{alg:proposed} for further evaluation using different problem sizes ($n=128, 256, 512, 1024$ and $d=20, 40$). 
The results are summarized in Table~\ref{tab:compare_student_t}.
\begin{table}[htbp]
\caption{Numerical performance for different problem settings}\label{tab:compare_student_t}
\begin{tabular*}{\textwidth}{@{\extracolsep{\fill}}cccccc}
\toprule
& & \multicolumn{2}{c}{$d = 20$} & \multicolumn{2}{c}{$d = 40$} \\
\cmidrule(lr){3-4} \cmidrule(lr){5-6}
$n$ & Method & Time (s) & Iter. & Time (s) & Iter. \\
\midrule
128  & Algorithm~\ref{alg:proposed} & 5.8  & 18861.6 & 14.4 & 36030.5 \\
256  & Algorithm~\ref{alg:proposed} & 21.5 & 46029.0 & 25.2 & 54493.8 \\
512  & Algorithm~\ref{alg:proposed} & 26.4 & 62904.0 & 37.5 & 71611.5 \\
1024 & Algorithm~\ref{alg:proposed} & 52.1 & 101578.3 & 34.8 & 86031.6 \\
\bottomrule
\end{tabular*}
\end{table}
Despite this being a nonconvex optimization problem, it was confirmed that Algorithm~\ref{alg:proposed} successfully converged in all cases.
Finally, Table~\ref{table:switch_count_student_t} shows the number of times that the condition~\eqref{conditional_branch} of Algorithm~\ref{alg:proposed} was not satisfied, as well as the ratio to the total number of iterations.
\begin{table}[htbp]
\centering
\caption{Switch count and ratio for Algorithm~\ref{alg:proposed}}
\label{table:switch_count_student_t}
\begin{tabular}{ccccc}
\toprule
$n$ & $d$ & Iter. & Switch & Ratio (\%) \\
\midrule
128  & 20 & 18861.6  & 1934.5  & 10.3 \\
128  & 40 & 36030.5  & 3157.1  & 8.8  \\
256  & 20 & 46029.0  & 7664.2  & 16.7 \\
256  & 40 & 54493.8  & 3062.7  & 5.6  \\
512  & 20 & 62904.0  & 9381.5  & 14.9 \\
512  & 40 & 71611.5  & 2855.5  & 4.0  \\
1024 & 20 & 101578.3 & 16883.2 & 16.6 \\
1024 & 40 & 86031.6  & 3638.8  & 4.2  \\
\bottomrule
\end{tabular}
\end{table}
We observe that the switching ratio is only 16.7\%, regardless of the nonconvexity of $g$.

\subsection{Least Squares Problem with MCP}\label{sec:ne_ls_MCP}
In this section, we evaluate the numerical performance of the Algorithm~\ref{alg:proposed_weakly} for the following least squares problem with the MCP (Minimax Concave Penalty):
\begin{equation*}
    \min_{x \in \mathbb{R}^n} \|Ax-b\|^2 + \sum_{j=1}^n p_{\lambda, c}(x_j).
\end{equation*}
Here, $A \in \mathbb{R}^{m \times n}$ and $b \in \mathbb{R}^m$ are defined in the same way as in Section~\ref{sec:ne_lasso}.
The function $p_{\lambda, c}(t)$ is the MCP function with parameters $\lambda > 0$ and $c > 0$, defined as follows:
\begin{equation*}
    p_{\lambda, c}(x_j) =
    \begin{cases}
        \lambda |x_j| - \frac{x_j^2}{2c}, & (|x_j| \le c \lambda), \\
        \frac{1}{2}c \lambda^2, & (|x_j| > c \lambda).
    \end{cases}
\end{equation*}
Although the MCP function $p_{\lambda, c}(x_j)$ is nonconvex, it is known to be $\rho$-weakly convex with $\rho = 1/c$.
In all experiments, $c$ is set to 0.1 and 10, where the smaller value $c=0.1$ yields a stronger nonconvexity.
Since the weak convexity of the regularization term does not generally guarantee uniqueness of the proximal mapping, we apply the parameter selection $\mu_k \in (0,1/\rho)$ from Section~\ref{sec:weakly_convex} to ensure strong convexity of the subproblems for PGM and TFOCS.
PNOPT is excluded from the comparison as its convergence is not theoretically guaranteed for the nonconvex problems considered here.
Datasets and termination criteria follow Section~\ref{sec:ne_lasso}.
In addition to Algorithm~\ref{alg:proposed_weakly}, we evaluate Algorithm~4', a variant incorporating the quadratic interpolation strategy described in Algorithm~3'.

Tables~\ref{table:computation_time_MSE_MCP} and~\ref{table:iteration_count_MSE_MCP} show the computation time and the iteration count, respectively.
\begin{table}[htbp]
\centering
\caption{CPU time (s)}
\label{table:computation_time_MSE_MCP}
\begin{tabular}{c c c c c c}
\toprule
($\lambda$,$\theta$) & $(m,n,s)$ & Algorithm~\ref{alg:proposed_weakly} & Algorithm 4' & PGM & TFOCS \\
\midrule
(0.1,0.1) & (500, 550, 50)   & 181.5 & \textbf{64.2}  & -      & - \\
(0.1,0.1) & (1000, 1050, 50) & 842.3 & \textbf{214.6} & -      & - \\
(0.1,0.1) & (500, 150, 50)   & 0.3   & \textbf{0.1}   & 1.8    & - \\
(0.1,0.1) & (1000, 300, 60)  & 2.6   & \textbf{0.6}   & 8.5    & - \\
(0.1,0.1) & (3000, 500, 180) & 17.0  & \textbf{2.1}   & 52.6   & - \\
(0.1,0.1) & (5000, 1500, 300)& 248.7 & \textbf{19.2}  & 1100.6 & - \\
(0.1,0.1) & (7000, 2000, 400)& 203.2 & \textbf{25.3}  & 445.9  & - \\
\midrule
(0.1,10)  & (500, 550, 50)   & 5.9   & \textbf{1.4}   & 51.9   & - \\
(0.1,10)  & (1000, 1050, 50) & 73.3  & \textbf{9.3}   & -      & - \\
(0.1,10)  & (500, 150, 50)   & 0.3   & \textbf{0.1}   & 1.5    & - \\
(0.1,10)  & (1000, 300, 60)  & \textbf{2.1}   & \textbf{2.1}   & 7.1    & - \\
(0.1,10)  & (3000, 500, 180) & 13.2  & \textbf{2.2}   & 49.3   & - \\
(0.1,10)  & (5000, 1500, 300)& 207.9 & \textbf{47.7}  & 1123.5 & 419.4 \\
(0.1,10)  & (7000, 2000, 400)& 184.5 & \textbf{25.9}  & 438.7  & 279.4 \\
\bottomrule
\end{tabular}
{\footnotesize A dash (–) indicates that the algorithm did not converge within 50,000 iterations.}
\end{table}
\begin{table}[htbp]
\centering
\caption{Number of iterations}
\label{table:iteration_count_MSE_MCP}
\begin{tabular}{c c c c c c}
\toprule
($\lambda$,$\theta$) & $(m,n,s)$ & Algorithm~\ref{alg:proposed_weakly} & Algorithm 4' & PGM & TFOCS \\
\midrule
(0.1,0.1) & (500, 550, 50)   & 23490.0 & \textbf{23116.4} & -        & - \\
(0.1,0.1) & (1000, 1050, 50) & 20025.3 & \textbf{18805.6} & -        & - \\
(0.1,0.1) & (500, 150, 50)   & 309.3   & \textbf{213.7}   & 3484.5   & - \\
(0.1,0.1) & (1000, 300, 60)  & 432.6   & \textbf{330.4}   & 7002.7   & - \\
(0.1,0.1) & (3000, 500, 180) & 312.3   & \textbf{183.2}   & 10159.0  & - \\
(0.1,0.1) & (5000, 1500, 300)& 529.0   & \textbf{251.0}   & 40442.6  & - \\
(0.1,0.1) & (7000, 2000, 400)& 299.4   & \textbf{165.9}   & 8988.0   & - \\
\midrule
(0.1,10)  & (500, 550, 50)   & 1005.2 & \textbf{636.9}  & 45890.6 & - \\
(0.1,10)  & (1000, 1050, 50) & 1643.9 & \textbf{951.2}  & -       & - \\
(0.1,10)  & (500, 150, 50)   & 269.2  & \textbf{178.4}  & 3031.2  & - \\
(0.1,10)  & (1000, 300, 60)  & 410.5  & \textbf{288.9}  & 6255.4  & - \\
(0.1,10)  & (3000, 500, 180) & 323.6  & \textbf{176.7}  & 9803.0  & - \\
(0.1,10)  & (5000, 1500, 300)& \textbf{511.9}  & \textbf{511.9}  & 38999.2 & 9912.4 \\
(0.1,10)  & (7000, 2000, 400)& 306.1  & \textbf{185.2}  & 8701.5  & 3944.2 \\
\bottomrule
\end{tabular}
{\footnotesize A dash (–) indicates that the algorithm did not converge within 50,000 iterations.}
\end{table}
From these results, we observe that Algorithm~4' consistently outperforms the comparison methods across most datasets for both $c=0.1$ and $c=10$.
Notably, TFOCS fails to converge within the 50,000 iteration limit for several problems.
In contrast, Algorithm~\ref{alg:proposed_weakly} and~4' successfully converge in all cases, maintaining stable performance regardless of problem size.
Furthermore, Algorithm~4' outperforms Algorithm~\ref{alg:proposed_weakly} in all cases, demonstrating the effectiveness of the quadratic interpolation-based line search.
Finally, Table~\ref{table:switch_count_ls_MCP} shows the number of times that the condition~\eqref{conditional_branch} was not satisfied, as well as the ratio to the total number of iterations.
\begin{table}[htbp]
\centering
\caption{Switch count and ratio for Algorithm~\ref{alg:proposed_weakly} and Algorithm 4'}
\label{table:switch_count_ls_MCP}
\begin{tabular}{cccccccc}
\toprule
 &  & \multicolumn{3}{c}{Algorithm~\ref{alg:proposed_weakly}} & \multicolumn{3}{c}{Algorithm 4'} \\
\cmidrule(lr){3-5} \cmidrule(lr){6-8}
$(\lambda,\theta)$ & $(m,n,s)$ & Iter. & Switch & Ratio (\%) & Iter. & Switch & Ratio (\%) \\
\midrule
(0.1,0.1) & (500, 550, 50)    & 23490.0 & 695.7 & 3.0 & 23116.4 & 800.5 & 3.5 \\
(0.1,0.1) & (1000, 1050, 50)  & 20025.3 & 226.9 & 1.1 & 18805.6 & 304.2 & 1.6 \\
(0.1,0.1) & (500, 150, 50)    & 309.3   & 31.6  & 10.2 & 213.7   & 24.4  & 11.4 \\
(0.1,0.1) & (1000, 300, 60)   & 432.6   & 55.9  & 12.9 & 330.4   & 49.3  & 14.9 \\
(0.1,0.1) & (3000, 500, 180)  & 312.3   & 4.9   & 1.6 & 183.2   & 6.6   & 3.6 \\
(0.1,0.1) & (5000, 1500, 300) & 529.0   & 8.1   & 1.5 & 251.0   & 8.8   & 3.5 \\
(0.1,0.1) & (7000, 2000, 400) & 299.4   & 1.0   & 0.3 & 165.9   & 1.5   & 0.9 \\
\midrule
(0.1,10)  & (500, 550, 50)    & 1005.2 & 97.8 & 9.7 & 636.9 & 73.8 & 11.6 \\
(0.1,10)  & (1000, 1050, 50)  & 1643.9 & 108.5 & 6.6 & 951.2 & 104.9 & 11.0 \\
(0.1,10)  & (500, 150, 50)    & 269.2  & 26.0 & 9.7 & 178.4 & 19.7 & 11.0 \\
(0.1,10)  & (1000, 300, 60)   & 410.5  & 50.7 & 12.4 & 288.9 & 48.4 & 16.8 \\
(0.1,10)  & (3000, 500, 180)  & 323.6  & 5.7 & 1.8 & 176.7 & 6.5 & 3.7 \\
(0.1,10)  & (5000, 1500, 300) & 511.9  & 6.9 & 1.4 & 547.7 & 11.4 & 2.1 \\
(0.1,10)  & (7000, 2000, 400) & 306.1  & 1.1 & 0.4 & 185.2 & 1.5 & 0.8 \\
\bottomrule
\end{tabular}
\end{table}
It can be observed that the ratio of switches is at most 16.8\%, indicating that the number of switches is sufficiently small.

\section{Concluding remarks}\label{sec:5}
In this paper, we proposed the proximal nonlinear conjugate gradient method for solving composite minimization problems.
The method defines the gradient via the \textit{forward-backward residual} and employs a search direction based on the three-term HS method.
Furthermore, when the objective function is strongly convex and the parameters are appropriately chosen, the proposed method coincides with nonlinear conjugate gradient methods, showing that it is a natural extension of these methods.
We have established the global convergence of the algorithm under standard assumptions even for weakly convex nonsmooth functions and analyzed its convergence rate in terms of stationarity. 
In addition, when the smooth term is strongly convex, we have proved the convergence of the generated sequence to the optimal solution along with its convergence rate.
Numerical experiments demonstrate that the proposed method exhibits stable performance and consistently outperforms TFOCS and PNOPT in both convex and nonconvex settings.
This paper is the first attempt to apply the nonlinear conjugate gradient methods to minimize composite function.
Based on the development of nonlinear conjugate gradient methods, further improvements and new method proposals are expected in the future. 
Future work includes to develop a method that guarantees global convergence without switching to the proximal gradient methods.
\backmatter
\bmhead{Acknowledgements}
This work was supported by Japan Society for the Promotion of Science (JSPS) KAKENHI Grant Numbers JP26K14720 and J23K10999.
The authors would like to thank Dr. Shummin Nakayama for helpful comments on the numerical experiments.
\section*{Declarations}
\bmhead{Funding}  
This work was supported by Japan Society for the Promotion of Science (JSPS) KAKENHI Grant Numbers JP26K14720 and JP23K10999.
\bmhead{Competing interests}  
The authors have no competing interests to declare that are relevant to the content of this article.
\bmhead{Data Availability}
Data sets generated during the current study are available from the corresponding author on reasonable request.

\bibliography{main}

\end{document}